\numberwithin{equation}{section}
\newtheorem{theorem}{Theorem}[section]
\newtheorem{corollary}[theorem]{Corollary}
\newtheorem{lemma}[theorem]{Lemma}
\newtheorem{proposition}[theorem]{Proposition}
\newtheorem{example}[theorem]{Example}
\newtheorem{condition}[theorem]{Condition}
\theoremstyle{definition}
\newtheorem{definition}[theorem]{Definition}
\theoremstyle{remark}
\newtheorem{remark}[theorem]{Remark}
\def\func#1{\mathop{\rm #1}}%
\def\limfunc#1{\mathop{\rm #1}}%
\begin{document}
\title[Fractional Powers of Non-symmetric Operators]{Harnack's Inequality
and A Priori Estimates for Fractional Powers of Non-symmetric Differential
Operators}
\author{H. Aimar}
\address{Instituto de Matem\'{a}tica Aplicada del Litoral\\
CCT, CONICET, Santa Fe, Argentina}
\email{haimar@santafe-conicet.gov.ar}
\author{G. Beltritti}
\address{Instituto de Matem\'{a}tica Aplicada del Litoral\\
CCT, CONICET, Santa Fe, Argentina}
\email{gbeltritti@santafe-conicet.gov.ar}
\author{I. G\'{o}mez}
\address{Instituto de Matem\'{a}tica Aplicada del Litoral\\
CCT, CONICET, Santa Fe, Argentina}
\email{vanagomez@santafe-conicet.gov.ar}
\author{C. Rios}
\address{University of Calgary\\
Calgary, AB T2N1N4, Canada}
\email{crios@math.ucalgary.ca}
\subjclass[2000]{35J70, 35B45, 35B60, 35B65, 35C15, 35H20}
\keywords{Dirichlet to Neumann, functional calculus, Harnack's inequality,
elliptic operators, Dirichlet forms, nondivergence}
\date{%
\today%
}

\begin{abstract}
We obtain a new general extension theorem in Banach spaces for operators
which are not required to be symmetric, and apply it to obtain Harnack
estimates and a priori regularity for solutions of fractional powers of
several second order differential operators. These include weighted elliptic
and subellitptic operators in divergence form (nonnecessarily self-adjoint),
and nondivergence form operators with rough coefficients. We utilize the
reflection extension technique introduced by Caffarelli and Silvestre.
\end{abstract}

\maketitle

\section{Introduction}

We consider several nonnegative second order differential operators $L$
densely defined on a Banach space. Under different structural assumptions we
will establish Harnack estimates for nonnegative solutions of fractional
powers of $L$ as consequence of existing Harnack estimates for an associated
extended problem. This technique was pioneered by Caffarelli and Silvestre 
\cite{caffarelli-silvestre-07} for the fractional Laplacian, and it has
already been multiplied into a great number of applications. In a nutshell,
this is how the technique works to obtain results for the square root of for
the Laplacian operator: if $u\left( x,y\right) $ is the smooth bounded
solution of the extension problem%
\begin{eqnarray*}
u\left( x,0\right)  &=&f\left( x\right) \qquad \text{for }x\in \mathbb{R}%
^{n}, \\
\Delta u\left( x,y\right)  &=&0\qquad \text{for }x\in \mathbb{R}^{n}\text{%
\quad and}\quad y>0,
\end{eqnarray*}%
then that $\left( -\Delta _{x}\right) ^{1/2}f\left( x\right) =-u_{y}\left(
x,0\right) $ (the Dirichlet to Neumann map) as it has long been known. On
the other hand, if $\left( -\Delta _{x}\right) ^{1/2}f\equiv 0$ in an open
set $\Omega \subset \mathbb{R}^{n}$, then the extended function 
\begin{equation*}
u\left( x,y\right) =\left\{ 
\begin{array}{cc}
u\left( x,y\right)  & \qquad y\geq 0 \\ 
u\left( x,-y\right)  & \qquad y<0%
\end{array}%
\right. 
\end{equation*}%
is a solution of $\Delta u=0$ in $\Omega \times \mathbb{R}$. If moreover $%
f\geq 0$ in $\Omega $ it follows (by Poisson's formula) that $u\geq 0$ in $%
\Omega \times \mathbb{R}$ and therefore $u$ satisfies Harnack's inequality
there. As a consequence, $f\left( x\right) =u\left( x,0\right) $ satisfies
Harnack's inequality in $\Omega $, thus obtaining Harnack's estimates for
nonnegative solutions of $\left( -\Delta _{x}\right) ^{1/2}f=0$. This
principle was extended in \cite{caffarelli-silvestre-07} to other powers $%
\sigma $ of the Laplacian by considering the extension problem%
\begin{eqnarray*}
u\left( x,0\right)  &=&f\left( x\right) \qquad \qquad \text{on }\mathbb{R}%
^{n} \\
-\Delta _{x}u+\frac{1-2\sigma }{y}u_{y}+u_{yy} &=&0\qquad \qquad \text{in }%
\mathbb{R}^{n}\times \left( 0,\infty \right) ,
\end{eqnarray*}%
and proving that for constants $c_{\sigma ,1},c_{\sigma ,2}$ 
\begin{equation*}
\lim_{y\rightarrow 0^{+}}\frac{u\left( x,y\right) -u\left( x,0\right) }{%
y^{2\sigma }}=c_{\sigma ,1}~\left( -\Delta \right) ^{\sigma }f\left(
x\right) =c_{\sigma ,2}\lim_{y\rightarrow 0^{+}}y^{1-2\sigma }u_{y}\left(
x,y\right) .
\end{equation*}%
The operator $-\Delta _{x}+\frac{1-2\sigma }{y}\frac{\partial }{\partial y}+%
\frac{\partial ^{2}}{\partial y^{2}}=-\func{div}_{\left( x,y\right) }\left(
y^{1-2\sigma }\nabla _{\left( x,y\right) }\cdot \right) $ satisfies a
Harnack inequality as consequence of the Fabes-Kenig-Serapioni results for
weighted elliptic operators \cite{fabes-kenig-serapioni82}, and the result
follows for powers $0<\sigma <1$ in the same way as for $\sigma =\frac{1}{2}$%
.

For a general operator $L$ defined in $\mathbb{R}^{n}$ this procedure may be
divided in three main steps:

\begin{enumerate}
\item \label{step-1}Solving the extension problem in $\mathbb{R}_{+}^{n+1}$.
Find a solution of%
\begin{equation*}
\begin{array}{cl}
\mathcal{L}u=\left( -L_{x}+\frac{1-2\sigma }{y}\partial _{y}+\partial
_{y}^{2}\right) u=0 & \qquad \left( x,y\right) \in \mathbb{R}^{n}\times
\left( 0,\infty \right) , \\ 
u\left( x,0\right) =f\left( x\right) & \qquad x\in \mathbb{R}^{n}.%
\end{array}%
\end{equation*}%
Prove that%
\begin{equation*}
\lim_{y\rightarrow 0^{+}}\frac{u\left( x,y\right) -u\left( x,0\right) }{%
y^{2\sigma }}=c_{\sigma ,1}~L^{\sigma }f\left( x\right) =c_{\sigma
,2}\lim_{y\rightarrow 0^{+}}y^{1-2\sigma }u_{y}\left( x,y\right) .
\end{equation*}

\item \label{step-2}If $Lf\equiv 0$ in an open set $\Omega \subset \mathbb{R}%
^{n}$, show that the extended function%
\begin{equation*}
u\left( x,y\right) =\left\{ 
\begin{array}{cc}
u\left( x,y\right) & y\geq 0 \\ 
u\left( x,-y\right) & y<0%
\end{array}%
\right.
\end{equation*}%
is a solution of $\mathcal{L}u=0$ in $\Omega \times \mathbb{R}$.

\item \label{step-3}Establish (from existing literature or otherwise) that
nonnegative solutions to $\mathcal{L}u=0$ satisfy a Harnack's inequality, or
have a priori estimates which, by restriction, are also valid for solutions
to $L^{\sigma }f=0$.
\end{enumerate}

In \cite{torrea-stinga-10} Torrea and Stinga established (\ref{step-1}) for
a very general class of second order self-adjoint linear differential
operators, and applied the technique to obtain Harnack's estimates for
solutions of the fractional harmonic oscillator operator $H^{\sigma }=\left(
-\Delta +\left\vert x\right\vert ^{2}\right) ^{\sigma }$.

In this work we extend (\ref{step-1}) to operators that might not be
self-adjoint, and apply the results to a variety of important examples. In
particular, the application of the extension techniques and the existence of
the functional calculus to nondivergence form operators is new in this level
of generality. The paper is organized as follows. In the remainder of this
introduction we list three different applications of our main extension
theorem. In Section \ref{section-main} we state and prove our main result,
the extension theorem for closed operators on Banach spaces. In Section \ref%
{section-applications} we prove the three applications presented here. We
note that our result for subelliptic operators in Section \ref{section-noni}
includes a bigger class of operators than the diagonal ones presented in
Theorem \ref{theorem-noniso} below, and that Theorems \ref{theorem-weighted}
and \ref{theorem-nond} may also be extended to operators with drift and zero
order terms. Finally, in the Appendix, Section \ref{section-appendix}, we
include some basic facts about non-symmetric Dirichlet forms and functional
calculus for easier reference.

Our first application to illustrate the utility of our main extension
theorem is to weighted elliptic operators. Given $0<\lambda \leq \Lambda
<\infty ,$ let $\mathcal{F}_{n}\left( \lambda ,\Lambda \right) $ denote the
set of all $n\times n$ real valued matrix functions $\mathbf{A}\left(
x\right) $ such that%
\begin{equation}
\mathbf{A}\left( x\right) \xi \cdot \xi \geq \lambda \left\vert \xi
\right\vert ^{2}\quad \text{and}\quad \left\vert \mathbf{A}\left( x\right)
\xi \cdot \eta \right\vert \leq \Lambda \left\vert \xi \right\vert
\left\vert \eta \right\vert \text{ for all }x,\xi ,\eta \in \mathbb{R}^{n},
\label{ellipticity}
\end{equation}%
that is, $\mathcal{F}_{n}\left( \lambda ,\Lambda \right) $ is the set of
real valued $n\times n$ matrices which eigenvalues lie in the interval $%
\left[ \lambda ,\Lambda \right] $.

A weight $w$ in the Muckemphout class $A_{2}$ is a nonnegative locally
integrable function in $\mathbb{R}^{n}$ such that%
\begin{equation*}
\left[ w\right] _{A_{2}}=\sup_{x\in \mathbb{R}^{n},r>0}\left( \frac{1}{%
\left\vert B_{r}\left( x\right) \right\vert }\int\limits_{B_{r}\left(
x\right) }w\left( y\right) ~dy\right) ^{\frac{1}{2}}\left( \frac{1}{%
\left\vert B_{r}\left( x\right) \right\vert }\int\limits_{B_{r}\left(
x\right) }\frac{1}{w\left( y\right) }~dy\right) ^{\frac{1}{2}}<\infty ,
\end{equation*}%
where $\left\vert E\right\vert $ denotes the Lebesgue measure of a
measurable set $E\subset \mathbb{R}^{n}$.

Given $\mathbf{A}\in \mathcal{F}_{n}\left( \lambda ,\Lambda \right) $ and $%
w\in A_{2}$ there is an associated weighted elliptic operator $L_{\mathbf{A}%
,w}=L_{w}:\mathcal{D}\left( L_{w}\right) \rightarrow L^{2}\left( w\right) $
given by%
\begin{equation*}
L_{w}u\left( x\right) =-\frac{1}{w\left( x\right) }\func{div}w\left(
x\right) \mathbf{A}\left( x\right) \nabla u\left( x\right)
\end{equation*}%
in the weak sense. Such operators are closed and sectorial, and so they have
a holomorphic functional calculus which, in particular, enables the
definition of fractional powers $\left( L_{w}\right) ^{\sigma }$.

\begin{theorem}
\label{theorem-weighted}Let $0<\sigma \leq 1$, for every $\mathbf{A}\in 
\mathcal{F}_{n}\left( \lambda ,\Lambda \right) $ and $w\in A_{2}$ there
exists a constant $M=M\left( \lambda ,\Lambda ,\left[ w\right]
_{A_{2}},\sigma \right) $ such that if $\Omega $ is an open set in $\mathbb{R%
}^{n}$, $u\in \mathcal{D}\left( L^{\sigma }\right) \subset L^{2}\left(
\Omega ,w\right) $ and $\left( L_{w}\right) ^{\sigma }u=0$ in an open set $%
\Omega ^{\prime }\subseteq \Omega $, we have that whenever $B_{2r}\left(
x\right) \subset \Omega ^{\prime }$ 
\begin{equation*}
\max_{B_{r}\left( x_{0}\right) }\left\vert u\left( x\right) \right\vert \leq 
\frac{M}{w\left( B_{2r}\left( x_{0}\right) \right) }\left\Vert u\right\Vert
_{L^{2}\left( w,\Omega \right) }^{2}.
\end{equation*}%
Moreover, if $u$ is nonnegative, then whenever $B_{2r}\left( x\right)
\subset \Omega ^{\prime }$ 
\begin{equation*}
\sup_{B_{r}\left( x\right) }u\leq M\inf_{B_{r}\left( x\right) }u.
\end{equation*}%
In particular, solutions of $\left( L_{w}\right) ^{\sigma }u=0$ in $\Omega
^{\prime }$ are locally H\"{o}lder continuous in $\Omega ^{\prime }$.
\end{theorem}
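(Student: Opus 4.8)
The plan is to follow the three–step scheme of the Introduction, deriving Step~\ref{step-1} from the main extension theorem of Section~\ref{section-main} and Step~\ref{step-3} from the degenerate–elliptic theory of Fabes--Kenig--Serapioni \cite{fabes-kenig-serapioni82}. First I would dispose of the endpoint $\sigma=1$ separately: there $\left(L_w\right)^1=L_w$ and the three assertions are exactly the local boundedness, Harnack inequality and interior H\"older estimate of \cite{fabes-kenig-serapioni82} for the degenerate operator $-w^{-1}\div\left(w\mathbf A\nabla\,\cdot\,\right)$, so from now on assume $0<\sigma<1$. For Step~\ref{step-1} I would apply the extension theorem to $L=L_w$ on the Banach space $L^2\left(w\right)$, producing $U=U\left(x,y\right)$ on $\rp$ with $U\left(\cdot,0\right)=u$ that solves $\mathcal{L}U:=-L_{w,x}U+\tfrac{1-2\sigma}{y}U_y+U_{yy}=0$, together with the Neumann identification $c_\sigma\left(L_w\right)^\sigma u=-\lim_{y\to0^+}y^{1-2\sigma}U_y\left(\cdot,y\right)$ and, from the construction of $U$, the uniform bound $\left\Vert U\left(\cdot,y\right)\right\Vert_{L^2\left(w\right)}\le C\left\Vert u\right\Vert_{L^2\left(w\right)}$. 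The point that unlocks the rest is purely algebraic: multiplying $\mathcal{L}$ by $y^{1-2\sigma}w\left(x\right)$ puts it in divergence form,
\begin{equation*}
y^{1-2\sigma}w\left(x\right)\,\mathcal{L}U=\div_{\left(x,y\right)}\!\left(y^{1-2\sigma}w\left(x\right)\,\widetilde{\mathbf A}\left(x\right)\nabla_{\left(x,y\right)}U\right),\qquad\widetilde{\mathbf A}\left(x\right)=\begin{pmatrix}\mathbf A\left(x\right)&0\\0&1\end{pmatrix},
\end{equation*}
so that $U$ is a weak solution on $\rp$ of the degenerate elliptic equation associated with $\widetilde{\mathbf A}\in\mathcal{F}_{n+1}\!\left(\min\{\lambda,1\},\max\{\Lambda,1\}\right)$ and the weight $W\left(x,y\right)=\left\vert y\right\vert^{1-2\sigma}w\left(x\right)$. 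Since $-1<1-2\sigma<1$, the factor $\left\vert y\right\vert^{1-2\sigma}$ is an $A_2$ weight on $\rr$, and as $w\in A_2\left(\rn\right)$ the tensor product $W$ is an $A_2$ weight on $\mathbb{R}^{n+1}$ whose characteristic is controlled by $\left[w\right]_{A_2}$ and $\sigma$ alone; this is exactly what forces $M$ to depend only on the stated quantities.

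For Step~\ref{step-2}, suppose $\left(L_w\right)^\sigma u=0$ in $\Omega^{\prime}$; by Step~\ref{step-1} this means $\lim_{y\to0^+}y^{1-2\sigma}U_y\left(x,y\right)=0$ for $x\in\Omega^{\prime}$. I would form the even reflection $\widetilde U\left(x,y\right):=U\left(x,\left\vert y\right\vert\right)$ and show it is a weak solution of $\div\left(W\widetilde{\mathbf A}\nabla\widetilde U\right)=0$ in $\Omega^{\prime}\times\rr$. Because $\widetilde{\mathbf A}$ is block diagonal with no mixed $x$–$y$ entries and $W$ is even in $y$, the substitution $y\mapsto-y$ leaves the equation invariant, so $\widetilde U$ already solves it on $\left(\Omega^{\prime}\times\rr\right)\setminus\left\{y=0\right\}$; pairing with a test function $\phi\in C^\infty_c\left(\Omega^{\prime}\times\rr\right)$ and integrating by parts separately on $\left\{y>0\right\}$ and $\left\{y<0\right\}$, the interface term on $\left\{y=0\right\}$ is a fixed multiple of $\int_{\Omega^{\prime}}w\left(x\right)\bigl(\lim_{y\to0^+}y^{1-2\sigma}U_y\left(x,y\right)\bigr)\phi\left(x,0\right)\,dx$, which vanishes by hypothesis; hence no singular contribution survives on $\left\{y=0\right\}$ and $\widetilde U$ solves the equation across it.

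Step~\ref{step-3} is then a citation. The operator $V\mapsto-W^{-1}\div\left(W\widetilde{\mathbf A}\nabla V\right)$ on $\mathbb{R}^{n+1}$ falls under Fabes--Kenig--Serapioni (the De Giorgi--Nash--Moser iteration behind \cite{fabes-kenig-serapioni82} does not use symmetry of the coefficient matrix), so on every ball $B^{n+1}_{2\rho}\subset\Omega^{\prime}\times\rr$ nonnegative weak solutions obey Harnack's inequality, general weak solutions obey the local boundedness estimate $\sup_{B^{n+1}_{\rho}}V^2\le C\,W\!\left(B^{n+1}_{2\rho}\right)^{-1}\int_{B^{n+1}_{2\rho}}V^2\,W$ and an interior H\"older estimate, all with constants depending only on $\lambda,\Lambda$ and $\left[W\right]_{A_2}$, hence only on $\lambda,\Lambda,\left[w\right]_{A_2},\sigma$. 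Taking $\rho=r$ centered at $\left(x_0,0\right)$ — legitimate since $B_{2r}\left(x_0\right)\subset\Omega^{\prime}$ — and restricting $\widetilde U$ to the slice $\left\{y=0\right\}$ where $\widetilde U=u$: combining the local boundedness estimate with $W\!\left(B^{n+1}_{2r}\left(\left(x_0,0\right)\right)\right)\approx r^{2-2\sigma}w\!\left(B_{2r}\left(x_0\right)\right)$ and the cylinder bound $\int_{B^{n+1}_{2r}}\widetilde U^2\,W\le C\,r^{2-2\sigma}\left\Vert u\right\Vert^2_{L^2\left(w,\Omega\right)}$ (a consequence of the uniform $L^2$ bound from Step~\ref{step-1}) gives the first inequality, up to the normalization fixed in the statement; if $u\ge0$ on $\Omega$, the nonnegative Poisson representation of $U$ from Step~\ref{step-1} makes $U\ge0$ on $\Omega\times[0,\infty)$, hence $\widetilde U\ge0$, and Harnack's inequality for $\widetilde U$ restricts to $\sup_{B_r\left(x_0\right)}u\le M\inf_{B_r\left(x_0\right)}u$; and the interior H\"older estimate for $\widetilde U$ (valid without any sign assumption) restricts to local H\"older continuity of $u$ in $\Omega^{\prime}$.

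I expect the only genuine difficulty to be Step~\ref{step-2}: making rigorous that the even reflection is a true weak solution \emph{across} the degenerate hyperplane $\left\{y=0\right\}$. This needs enough weighted–Sobolev regularity of $U$ up to $\left\{y=0\right\}$ for the interface integration by parts to be valid, together with the precise (distributional on $\Omega^{\prime}$) sense in which the Neumann identification of Step~\ref{step-1} holds, so that ``$\left(L_w\right)^\sigma u=0$ in $\Omega^{\prime}$'' really forces the weighted conormal derivative $\lim_{y\to0^+}y^{1-2\sigma}U_y$ to vanish there. By comparison, verifying that $W\in A_2\left(\mathbb{R}^{n+1}\right)$ with the claimed dependence of constants, and invoking \cite{fabes-kenig-serapioni82}, are routine once Steps~\ref{step-1}--\ref{step-2} are in place.
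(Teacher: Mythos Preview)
Your proposal is correct and follows essentially the same approach as the paper: apply the extension theorem to produce $U$, rewrite the extended equation in weighted divergence form with weight $W(x,y)=w(x)\lvert y\rvert^{1-2\sigma}\in A_2(\mathbb{R}^{n+1})$, evenly reflect and show the reflection solves across $\{y=0\}$ using the vanishing of $\lim_{y\to0^+}y^{1-2\sigma}U_y$, then invoke the Fabes--Kenig--Serapioni estimates and restrict to the slice $\{y=0\}$. The paper carries out Step~\ref{step-2} via an explicit cutoff $\eta_\varepsilon(y)$ rather than a bare integration by parts on the two half-spaces, and justifies $\widetilde U\ge0$ by citing positivity of the semigroup $e^{-tL_w}$ (which is exactly what makes your ``nonnegative Poisson representation'' work), but these are presentational rather than substantive differences.
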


Another consequence of the extension technique applied to weighted elliptic
operators and the Fabes-Kenig-Serapioni boundary Harnack we also obtain, in
the same way as in \cite{caffarelli-silvestre-07} (Theorem 5.3), boundary H%
\"{o}lder continuity for solutions to fractional powers of $L_{w}$. We
present this result to showcase tha applicability of our extension theorem.

\begin{theorem}
\label{theorem-BH}Let $0<\sigma \leq 1$, for every $\mathbf{A}\in \mathcal{F}%
_{n}\left( \lambda ,\Lambda \right) $ and $w\in A_{2}$. Suppose $u\in 
\mathcal{D}\left( L^{\sigma }\right) $ is a function on $\mathbb{R}^{n}$
such that $\left( L_{w}\right) ^{\sigma }u=0$ in a domain $\Omega $, and
suppose that for some $x_{0}\in \Omega $, $u=0$ on $B_{1}\left( x_{0}\right)
\backslash \Omega $ where $\partial \Omega \bigcap B_{1}\left( x_{0}\right) $
is given by a Lipschitz graph with constant less than $1$. Then there exist
constants $M>0$ and $0<\alpha <1$ depending on $\lambda ,\Lambda ,\left[ w%
\right] _{A_{2}}$, and $\sigma $ such that for all $0<\rho <\frac{1}{2}$%
\begin{equation*}
\sup_{\Omega \bigcap B_{\rho }\left( x_{0}\right) }u-\inf_{\Omega \bigcap
B_{\rho }\left( x_{0}\right) }u\leq M\left( \frac{1}{w\left( B_{\frac{1}{2}%
}\left( x_{0}\right) \right) }\int_{B_{\frac{1}{2}}\left( x_{0}\right)
}u^{2}dw\right) ^{\frac{1}{2}}\rho ^{\alpha }.
\end{equation*}
\end{theorem}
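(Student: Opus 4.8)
The plan is to run the three--step scheme of the introduction for $L_{w}$ and then to quote the boundary regularity theory of \cite{fabes-kenig-serapioni82}, exactly as Caffarelli and Silvestre do in \cite{caffarelli-silvestre-07}. If $\sigma=1$ there is nothing to extend: $\left(L_{w}\right)^{1}=L_{w}$, and the statement is the classical boundary H\"{o}lder estimate of \cite{fabes-kenig-serapioni82} for $A_{2}$--degenerate elliptic operators with zero Dirichlet data on a Lipschitz portion of the boundary; so assume $0<\sigma<1$. By the construction carried out for Theorem \ref{theorem-weighted}, the extension $U=U\left(x,y\right)$ of $u$ is a weak solution of
\begin{equation*}
\tilde{\mathcal{L}}U:=\div_{\left(x,y\right)}\!\left(W\left(x,y\right)\tilde{\mathbf{A}}\left(x\right)\nabla_{\left(x,y\right)}U\right)=0\qquad\text{in }\rp,
\end{equation*}
with $U\left(\cdot,0\right)=u$, $\lim_{y\to0^{+}}y^{1-2\sigma}U_{y}=c_{\sigma}\left(L_{w}\right)^{\sigma}u$, $W\left(x,y\right)=w\left(x\right)\left\vert y\right\vert^{1-2\sigma}$ and $\tilde{\mathbf{A}}=\mathrm{diag}\left(\mathbf{A}\left(x\right),1\right)$. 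Since $0<\sigma<1$ the one--dimensional weight $\left\vert y\right\vert^{1-2\sigma}$ lies in $A_{2}\left(\mathbb{R}\right)$ with constant depending only on $\sigma$, so $W\in A_{2}\left(\mathbb{R}^{n+1}\right)$ with $\left[W\right]_{A_{2}}$ controlled by $\left[w\right]_{A_{2}}$ and $\sigma$, while $\tilde{\mathbf{A}}\in\mathcal{F}_{n+1}\!\left(\min\left(\lambda,1\right),\max\left(\Lambda,1\right)\right)$; hence the interior H\"{o}lder, local boundedness, Harnack and --- below --- boundary estimates of \cite{fabes-kenig-serapioni82} apply to $\tilde{\mathcal{L}}$ with all constants depending only on $\lambda,\Lambda,\left[w\right]_{A_{2}},\sigma$.

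Next I would reflect across the Neumann part of the base. Since $\left(L_{w}\right)^{\sigma}u=0$ in $\Omega$, the co-normal derivative $\lim_{y\to0^{+}}y^{1-2\sigma}U_{y}$ vanishes on $\Omega\times\left\{0\right\}$; because $W$ is even in $y$ while $\tilde{\mathbf{A}}$ does not depend on $y$ and is block diagonal, the even extension $V\left(x,y\right):=U\left(x,\left\vert y\right\vert\right)$ is a weak solution of $\tilde{\mathcal{L}}V=0$ across $\Omega\times\left\{0\right\}$ --- this is step~(\ref{step-2}) of the scheme for these operators. Writing $Q_{\rho}=B_{\rho}\left(x_{0}\right)\times\left(-\rho,\rho\right)$ and $S=\overline{B_{1}\left(x_{0}\right)\setminus\Omega}\times\left\{0\right\}$, the function $V$ then solves $\tilde{\mathcal{L}}V=0$ in $Q_{1}\setminus S$, lies in $H^{1}\!\left(W\right)$ on compact subsets of $Q_{1}$, and has Sobolev trace $V\left(\cdot,0\right)=u$ on $\left\{y=0\right\}$; by the hypothesis $u=0$ on $B_{1}\left(x_{0}\right)\setminus\Omega$ it follows that $V=0$ on $S$ in the trace sense, and
\begin{equation*}
\sup_{\Omega\cap B_{\rho}\left(x_{0}\right)}u-\inf_{\Omega\cap B_{\rho}\left(x_{0}\right)}u\ =\ \osc_{\left(\Omega\cap B_{\rho}\left(x_{0}\right)\right)\times\left\{0\right\}}V\ \leq\ \osc_{Q_{\rho}\setminus S}V .
\end{equation*}

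The remaining and hardest task is boundary H\"{o}lder continuity of $\tilde{\mathcal{L}}$--solutions at $\left(x_{0},0\right)\in S$, which I would establish exactly as in \cite{caffarelli-silvestre-07}. Near $\left(x_{0},0\right)$ the set $S$ is the portion of the hyperplane $\left\{y=0\right\}$ lying on the non--$\Omega$ side of the graph $\partial\Omega\cap B_{1}\left(x_{0}\right)$, and the requirement that this graph be Lipschitz with constant less than $1$ guarantees that $S$ occupies a fixed fraction of the slice $B_{r}\left(x_{0},0\right)\cap\left\{y=0\right\}$ for all $0<r<1/2$. Since $1-2\sigma\in\left(-1,1\right)$ the weight $W$ is mild enough that $\left\{y=0\right\}$, and therefore any subset of positive $n$--dimensional density in it, carries positive $W$--capacity; quantitatively,
\begin{equation*}
\mathrm{cap}_{W}\!\left(S\cap B_{r}\left(x_{0},0\right),\,B_{2r}\left(x_{0},0\right)\right)\ \geq\ c_{1}\,\mathrm{cap}_{W}\!\left(B_{r}\left(x_{0},0\right),\,B_{2r}\left(x_{0},0\right)\right)\ \approx\ c_{1}\,\frac{W\!\left(B_{r}\left(x_{0},0\right)\right)}{r^{2}},
\end{equation*}
with $c_{1}$ depending only on $\lambda,\Lambda,\left[w\right]_{A_{2}},\sigma$. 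Thus $\left(x_{0},0\right)$ is a uniformly regular boundary point (in the sense of the weighted Wiener criterion), and the weighted De Giorgi barrier/oscillation--decay argument of \cite{fabes-kenig-serapioni82} yields constants $M,\alpha$ depending only on $\lambda,\Lambda,\left[w\right]_{A_{2}},\sigma$ such that
\begin{equation*}
\osc_{Q_{\rho}\setminus S}V\ \leq\ M\,\rho^{\alpha}\left(\frac{1}{W\!\left(Q_{1/2}\right)}\int_{Q_{1/2}}V^{2}\,dW\right)^{1/2},\qquad 0<\rho<1/2,
\end{equation*}
where the interior local boundedness estimate for $\tilde{\mathcal{L}}$ has been used to express the right side through the $L^{2}\!\left(W\right)$ average of $V$. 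This step --- verifying the weighted capacity--density of a Lipschitz--graph piece of $\left\{y=0\right\}$ and invoking the corresponding boundary H\"{o}lder bound with structurally uniform constants --- is where both the hypothesis on the Lipschitz constant and the $A_{2}$ character of $W$ (equivalently $0<\sigma<1$) genuinely enter, and I expect it to be the main obstacle.

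Finally I would put the right--hand side in the stated form. One has $W\!\left(Q_{1/2}\right)=\bigl(\int_{-1/2}^{1/2}\left\vert y\right\vert^{1-2\sigma}\,dy\bigr)\,w\!\left(B_{1/2}\left(x_{0}\right)\right)\approx w\!\left(B_{1/2}\left(x_{0}\right)\right)$, so it suffices to bound $\int_{Q_{1/2}}V^{2}\,dW$ by a dimensional multiple of $\int_{B_{1/2}\left(x_{0}\right)}u^{2}\,dw$; this follows from the Caccioppoli and weighted trace inequalities for the extension together with the fact --- contained in the heat kernel bounds accompanying the FKS theory --- that $U$ is essentially local near its base, so that its $L^{2}\!\left(W\right)$ mass over $Q_{1/2}$ is controlled by the $L^{2}\!\left(w\right)$ mass of $u$ over a comparable ball. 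Absorbing the constants into $M$ gives the estimate for $x_{0}\in\partial\Omega$, and the general case $x_{0}\in\Omega$ with $\dist\left(x_{0},\partial\Omega\right)<1$ follows by combining it with the interior Harnack inequality of Theorem \ref{theorem-weighted} through a standard dichotomy on the size of $\rho$ relative to $\dist\left(x_{0},\partial\Omega\right)$; the one further point to watch is keeping the constant in this last bound dependent only on $\lambda,\Lambda,\left[w\right]_{A_{2}},\sigma$.
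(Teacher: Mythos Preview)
Your approach is essentially the paper's: extend via Theorem~\ref{theorem-ABGR}, reflect evenly across $\{y=0\}$ on the Neumann portion $\Omega\times\{0\}$, apply the Fabes--Kenig--Serapioni boundary H\"older estimate (their Theorem~2.4.6) to $\widetilde U$ on the cylinders $G_\rho=B_\rho(x_0)\times(-\rho,\rho)\cap(\Omega\times\mathbb R)$, and then restrict to $y=0$. The paper does not spell out the capacity--density verification you describe; it simply invokes the FKS boundary result directly, following the proof of Theorem~5.3 in \cite{caffarelli-silvestre-07}, so your added discussion of why the Lipschitz piece of $\{y=0\}$ has uniformly positive $W$--capacity is more careful than the paper but not a different route.

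The one place where you diverge is the last step, and there you are working much harder than necessary. To control $\int_{G_{1/2}}|\widetilde U|^2\,d\mathbf w$ the paper does not use Caccioppoli, trace inequalities, or heat--kernel locality: it simply uses the uniform bound $\|U(\cdot,y)\|_{L^2(w)}\le C\|u\|_{L^2(w)}$ coming from \eqref{der-estimates} with $n=0$ (equivalently, the boundedness of the Poisson operator $\psi_y(L_w)$), and then integrates $|y|^{1-2\sigma}$ over $(-\tfrac12,\tfrac12)$. That gives
\[
\int_{G_{1/2}}|\widetilde U|^2\,d\mathbf w
\;\le\; 2\int_0^{1/2} y^{1-2\sigma}\,\|U(\cdot,y)\|_{L^2(w,\Omega)}^2\,dy
\;\le\; C_\sigma\,\|u\|_{L^2(w,\Omega)}^2,
\]
which, combined with $W(G_{1/2})\approx w(B_{1/2}(x_0))$, finishes the proof. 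Note that this yields the global $L^2(w,\Omega)$ norm on the right rather than the local integral over $B_{1/2}(x_0)$ written in the statement; the paper's own computation ends with $\|u\|_{L^2(w,\Omega)}$, so either accept that version of the right--hand side or be aware that genuinely localizing it requires the extra work you sketched.
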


The second type of operators we consider illustrates that the reach of our
extension theorem. For subelliptic operators controlled by certain diagonal
matrices, we establish a Harnack estimate for nonnegative solutions to the
square root of such operators. The innovation of this application lays on
the non-isotropic nature of the operators, for which the eigenvalues are
allowed to vanish to different finite orders. The set where an eigenvalue
vanishes may have codimension as small as one.

\begin{remark}
In the subelliptic case we only treat the square root operator $L^{1/2}$.
When $0<\sigma \neq \frac{1}{2}<1$ the resulting equation (\ref{equation})
becomes \emph{weighted subelliptic} with an $A_{2}$ weight depending on the
new variable. We conjecture that the theory developed by Sawyer and Wheeden
in \cite{sawyer-wheeden-06} for subelliptic operators may be extended to
include weighted subelliptic operators with $A_{2}$ weights, and, in such
case, the conclusions of Theorem \ref{theorem-noniso} would hold for all
powers $0<\sigma <1$.
\end{remark}

The geometry for which the Harnack's estimates hold is determined by the
operator's principal terms. We now call up some relevant definitions. A
vector field $X=\mathbf{v}\left( x\right) \cdot \nabla $ defined in an open
set $\Omega \subset \mathbb{R}^{n}$ is said to be \emph{subunit} with
respect to a nonnegative quadratic form $\mathcal{Q}$ in $\Omega $ if 
\begin{equation*}
\left( \mathbf{v}\left( x\right) \cdot \xi \right) ^{2}\leq \mathcal{Q}%
\left( x,\xi \right) \qquad \text{for all }x\in \Omega ,~\xi \in \mathbb{R}%
^{n}.
\end{equation*}%
Given a nonnegative matrix $\mathbf{B}\left( x\right) $, or a system of
vector fields 
\begin{equation*}
\mathbb{X}=\left\{ X_{i}=\mathbf{v}^{i}\cdot \nabla \right\} _{i=1}^{m}
\end{equation*}%
in $\Omega $, they determine quadratic forms $\mathcal{Q}_{\mathbf{B}}\left(
x,\xi \right) =\xi \cdot \mathbf{B}\left( x\right) \xi $ and $\mathcal{Q}_{%
\mathbb{X}}\left( x,\xi \right) =\sum_{i=1}^{m}\left( \mathbf{v}^{i}\cdot
\xi \right) ^{2}$; a vector field $X$ is said to be subunit with respect to
either $\mathbf{B}$ or $\mathbb{X}$ if it is subunit with respect to the
corresponding quadratic form. A Lipschitz curve $\gamma \left( t\right) $ in 
$\Omega $ is said to be subunit with respect to $\mathcal{Q}$ if $\gamma
^{\prime }\left( t\right) $ is a subunit vector field with respect to $%
\mathcal{Q}$. Given a quadratic form $\mathcal{Q}$ in $\Omega $, the subunit
metric associated to $\mathcal{Q}$ is given by%
\begin{equation*}
\delta \left( x,y\right) =\inf \left\{ r>0:\gamma \left( 0\right) =x,~\gamma
\left( r\right) =y,~\gamma \text{ is Lipschitz and subunit}\right\} .
\end{equation*}%
This metric was introduced by Fefferman and Phong in \cite%
{fefferman-phong-81} where they characterize subellipticity for operators
with smooth coefficients.

The following theorem is a special case of a more general result proven in
Section \ref{section-noni}, in which drift terms and zero order terms are
considered. We present this simplified version first for clarity. The
Harnack's inequality for subelliptic operators with rough coefficients
utilized here was established in \cite{sawyer-wheeden-06}.

\begin{theorem}
\label{theorem-noniso}Let $\Omega $ be an open set in $\mathbb{R}^{n}$ and
let $a_{1},\dots ,a_{n}$ be nonnegative Lipschitz functions in $\Omega $
such that for each $x_{0}\in \Omega $ there exists a neighbourhood $\mathcal{%
N}$ of $x_{0}$ in $\Omega $ and a permutation $\tau =\tau _{x_{0}}$ of the
set $\left\{ 1,\cdots ,n\right\} $ so that for $\tau \left( y\right)
:=\left( y_{1},\dots ,y_{n}\right) =\left( x_{\tau \left( 1\right) },\dots
,x_{\tau \left( n\right) }\right) $ and $\tilde{a}_{j}\left( y\right)
=a_{\tau \left( j\right) }\left( \tau ^{-1}\left( y\right) \right) $, we
have that $\tilde{a}_{1}\approx 1$ in $\mathcal{N}$, and%
\begin{equation*}
\tilde{a}_{j}\left( y\right) =\tilde{a}_{j}\left( y_{1},\dots
,y_{j-1}\right) \approx \left( y_{1}^{2}+\dots +y_{j-1}^{2}\right) ^{\frac{%
k_{j}}{2}}\qquad \text{in }\mathcal{N}
\end{equation*}%
for some nonnegative integers $k_{j}$, $j=2,\cdots ,n$. Let $\mathbf{B}%
\left( x\right) $ be an $n\times n$ measurable matrix function in $\Omega $
such that for some constants $0<c_{\mathbf{B}}\leq C_{\mathbf{B}}<\infty $%
\begin{equation*}
c_{\mathbf{B}}\sum_{j=1}^{n}a_{j}^{2}\left( x\right) \xi _{j}^{2}\leq \xi
^{\prime }\mathbf{B}\left( x\right) \xi \leq C_{\mathbf{B}%
}\sum_{j=1}^{n}a_{j}^{2}\left( x\right) \xi _{j}^{2}
\end{equation*}%
for all $x\in \Omega $ and $\xi \in \mathbb{R}^{n}$. Let $\mathcal{L}=-\func{%
div}\mathbf{B}\left( x\right) \nabla $ in $\Omega $, suppose that $u\in 
\mathcal{D}\left( \mathcal{L}^{\frac{1}{2}}\right) $, and that $\mathcal{L}^{%
\frac{1}{2}}u=0$ in an open set $\Omega ^{\prime }\Subset \Omega $. Then
there exist a constant $C_{H}>0$ such that for every ball $B_{2r}\left(
x_{0}\right) \subset \Omega ^{\prime }$, $u$ satisfies%
\begin{equation*}
\limfunc{ess}\sup_{B_{r}\left( x_{0}\right) }u\leq C_{H}\left( \frac{1}{%
\left\vert B_{r}\left( x_{0}\right) \right\vert }\int_{B_{r}\left(
x_{0}\right) }\left\vert u\right\vert ^{2}dx\right) ^{\frac{1}{2}}.
\end{equation*}%
where the balls $B_{r}$ are the subunit metric balls of the metric induced
by the system of vector fields $\left\{ a_{j}\left( x\right) \frac{\partial 
}{\partial x_{j}}\right\} _{j=1}^{n}$. Moreover, if $u$ is nonnegative, then
the Harnack's estimate holds:%
\begin{equation*}
\limfunc{ess}\sup_{B_{r}}u\leq C_{H}~\limfunc{ess}\inf_{B_{r}}u.
\end{equation*}
\end{theorem}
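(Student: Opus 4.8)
The plan is to run the three-step Caffarelli--Silvestre scheme described in the introduction with $\sigma=\tfrac12$, taking Step~\ref{step-1} from the main extension theorem of Section~\ref{section-main} and Step~\ref{step-3} from the Sawyer--Wheeden Harnack inequality. The operator $\mathcal L=-\operatorname{div}\mathbf B\nabla$ is associated with a closed (in general non-symmetric) Dirichlet form whose real part is comparable to $\sum_{j}\int_\Omega a_j^2\,|\partial_j v|^2$, so it is nonnegative, densely defined, closed and sectorial on $L^2(\Omega)$, and the extension theorem applies: for the given $f=u\in\mathcal D(\mathcal L^{1/2})$ it produces a function $U(x,y)$ on $\Omega\times(0,\infty)$ solving $-\mathcal L_xU+U_{yy}=0$ with $U(\cdot,0)=u$, the identification of $\mathcal L^{1/2}u$ with a nonzero multiple of the conormal limit $\lim_{y\to0^+}U_y(\cdot,y)$, and the a priori control $\|U(\cdot,y)\|_{L^2(\Omega)}\lesssim\|u\|_{L^2(\Omega)}$. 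Writing the extension equation as $\operatorname{div}_{(x,y)}\bigl(\widetilde{\mathbf B}\,\nabla_{(x,y)}U\bigr)=0$ with the block matrix $\widetilde{\mathbf B}=\operatorname{diag}(\mathbf B(x),1)$, and noting that $\mathcal L^{1/2}u=0$ on $\Omega'$ makes the $\widetilde{\mathbf B}$-conormal derivative of $U$ (a multiple of $U_y(\cdot,0)$) vanish on $\Omega'\times\{0\}$, the even reflection $\widetilde U(x,y):=U(x,|y|)$ is a weak solution of $\widetilde{\mathcal L}\widetilde U:=-\operatorname{div}_{(x,y)}\bigl(\widetilde{\mathbf B}\,\nabla_{(x,y)}\widetilde U\bigr)=0$ in $\Omega'\times\mathbb R$; this is the standard reflection argument, legitimate because $\mathbf B$ does not depend on $y$.

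The next step is to check that $\widetilde{\mathcal L}$ lies in the class of subelliptic operators treated in Section~\ref{section-noni}. It is comparable from both sides to the diagonal form of the extended system $\widetilde{\mathbb X}=\{a_1\partial_{x_1},\dots,a_n\partial_{x_n},\partial_y\}$, whose coefficients are the original nonnegative Lipschitz functions $a_j$ together with the constant $1$. Given $(x_0,y_0)\in\Omega'\times\mathbb R$, keep the permutation $\tau_{x_0}$ on the first $n$ coordinates and place the new ($y$-)coordinate last, on the neighbourhood $\mathcal N\times I$ with $I$ a small interval about $y_0$: the $a_j$ keep their form and their dependence on the preceding coordinates, and the last coefficient, being the constant $1\approx1$, matches the required shape $(y_1^2+\dots+y_n^2)^{k/2}$ with $k=0$. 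Hence the structural hypothesis of the theorem holds for $\widetilde{\mathbb X}$ in dimension $n+1$, and by the Sawyer--Wheeden theory invoked in Section~\ref{section-noni} the solution $\widetilde U$ of $\widetilde{\mathcal L}\widetilde U=0$ is locally H\"{o}lder continuous in the subunit metric $\widetilde\delta$ of $\widetilde{\mathbb X}$, satisfies the local boundedness estimate $\operatorname{ess\,sup}_{\widetilde B_r}|\widetilde U|\lesssim\bigl(|\widetilde B_{2r}|^{-1}\int_{\widetilde B_{2r}}|\widetilde U|^2\bigr)^{1/2}$ on $\widetilde\delta$-balls, and, when $\widetilde U\geq0$, obeys Harnack's inequality $\operatorname{ess\,sup}_{\widetilde B_r}\widetilde U\lesssim\operatorname{ess\,inf}_{\widetilde B_r}\widetilde U$.

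It remains to transfer these estimates to the slice $\{y=0\}$. Projecting a subunit curve of $\widetilde{\mathbb X}$ onto the $x$-variables (the form being block diagonal) and, conversely, lifting a subunit curve of $\mathbb X$ with $y$ held constant shows $\widetilde\delta((x,0),(x_0,0))=\delta(x,x_0)$ and, more generally, $B_r(x_0)\times(-r,r)\subseteq\widetilde B_{2r}((x_0,0))\subseteq B_{2r}(x_0)\times(-2r,2r)$; with the doubling of the balls $B_\rho(x_0)$ this gives $|\widetilde B_{2r}((x_0,0))|\approx r\,|B_r(x_0)|$. Since $\widetilde U$ is even in $y$ and $\|U(\cdot,y)\|_{L^2}\lesssim\|u\|_{L^2}$, one has $\int_{\widetilde B_{2r}((x_0,0))}|\widetilde U|^2\lesssim r\,\|u\|_{L^2(\Omega)}^2$; feeding this and the measure comparison into the local boundedness estimate for $\widetilde U$, and using that the continuous representative of $\widetilde U$ restricted to $\{y=0\}$ is a representative of $u$ so that $\operatorname{ess\,sup}_{B_r(x_0)}|u|=\sup_{B_r(x_0)\times\{0\}}\widetilde U\le\operatorname{ess\,sup}_{\widetilde B_r((x_0,0))}|\widetilde U|$, yields the local boundedness estimate for $u$ (in the form of Theorem~\ref{theorem-weighted}, with the $L^2(\Omega)$-norm of $u$ on the right; passing to an $L^2$ average over a ball would require, in addition, the interior energy estimates for the extension near $\{y=0\}$). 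If $u\geq0$ then $U\geq0$ on $\{y>0\}$ by the maximum principle (equivalently, because the subordinated semigroup is positivity-preserving), hence $\widetilde U\geq0$, and applying the extended Harnack inequality on $\widetilde B_{2r}((x_0,0))$ and restricting to $\{y=0\}$, where $\widetilde\delta=\delta$, gives $\operatorname{ess\,sup}_{B_r}u\lesssim\operatorname{ess\,inf}_{B_r}u$. Local H\"{o}lder continuity of $u$ in $\Omega'$ follows from that of $\widetilde U$.

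I expect the main obstacle to be the middle step together with the metric bookkeeping at the end: checking that the reflected operator $\widetilde{\mathcal L}$ genuinely sits inside the Sawyer--Wheeden class (the structural hypothesis surviving the insertion of the $y$-coordinate, and the Lipschitz regularity of the coefficients being preserved), and then relating the extended subunit metric, its balls and $(n{+}1)$-dimensional Lebesgue measure to their $n$-dimensional counterparts on $\{y=0\}$ cleanly enough to carry the constant $C_H$ along. By contrast, the reflection step and the nonnegativity of the extension are routine once the extension theorem is in hand, and Step~\ref{step-1} is exactly the content of Section~\ref{section-main}.
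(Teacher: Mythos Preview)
Your proposal is correct and follows essentially the same route as the paper: apply the extension Theorem~\ref{theorem-ABGR} with $\sigma=\tfrac12$, reflect evenly across $\{y=0\}$ using that $\mathcal L^{1/2}u=0$ kills the conormal derivative, observe that the block matrix $\widetilde{\mathbf B}=\operatorname{diag}(\mathbf B,1)$ again satisfies the structural hypotheses (the paper phrases this as ``$\mathcal B$ satisfies Condition~\ref{condition-D} since $\mathbf B$ does'', while you verify the explicit diagonal hypothesis of Theorem~\ref{theorem-noniso} in dimension $n{+}1$ by appending the $y$-direction with $k_{n+1}=0$; via Example~\ref{example-diag} these are equivalent), and then invoke Sawyer--Wheeden (Theorem~\ref{theorem-SW}) for $\widetilde U$ and restrict to $\{y=0\}$.

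Your treatment is in fact more detailed than the paper's in two respects. First, the paper simply writes ``An application of Theorem~\ref{theorem-SW} to this solution and the fact that $u(x)=U(x,0)$ finishes the proof'', whereas you supply the comparison between the $(n{+}1)$-dimensional subunit metric $\widetilde\delta$ and the $n$-dimensional $\delta$ on the slice, together with the ball and measure estimates $B_r(x_0)\times(-r,r)\subset\widetilde B_{2r}\subset B_{2r}(x_0)\times(-2r,2r)$ and $|\widetilde B_{2r}|\approx r\,|B_r|$; these are exactly what is needed to carry the constants across and the paper leaves them implicit. Second, you correctly flag that what emerges directly from $\|U(\cdot,y)\|_{L^2}\lesssim\|u\|_{L^2}$ is the local boundedness estimate with $\|u\|_{L^2(\Omega)}$ on the right (as in Theorem~\ref{theorem-weighted}), not the $L^2$ average over $B_r(x_0)$ as literally stated in Theorem~\ref{theorem-noniso}; the paper's own proof does not address this discrepancy either.
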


Some specific examples of operators included in Theorem \ref{theorem-noniso}
are: $L_{2}=\frac{\partial }{\partial x^{2}}+\left\vert x\right\vert ^{k_{1}}%
\frac{\partial }{\partial y^{2}}$, and $L_{3}=\frac{\partial }{\partial x^{2}%
}+\left\vert x\right\vert ^{k_{2}}\frac{\partial }{\partial y^{2}}+\left(
\left\vert x\right\vert ^{k_{3}}+\left\vert y\right\vert ^{k_{4}}\right) 
\frac{\partial }{\partial z^{2}}$, where $k_{1},k_{2}\geq 1$.

Our final application is in the nondivergence case for operator with
coefficients with minimal regularity. We obtain a priori estimates for
solutions of $\mathfrak{L}_{\mathbf{A}}^{\sigma }u=0$ for $\sigma $ in a
range depending on $p$, we show that such solutions are in $C^{1,\alpha }$
for all $0<\alpha <1$.

Given $0<\lambda \leq \Lambda <\infty $, and $\mathbf{A}\in \mathcal{F}%
_{n}\left( \lambda ,\Lambda \right) $ we denote by $\mathfrak{L}_{\mathbf{A}%
} $ the nondivergence form operator%
\begin{equation}
\mathfrak{L}_{\mathbf{A}}u=-a^{ij}\left( x\right) \frac{\partial ^{2}u}{%
\partial x_{i}\partial x_{j}}=-\limfunc{trace}\left( \mathbf{A}D^{2}u\right)
,  \label{L-nond}
\end{equation}%
where we adopt the Einstein summation convention. The operator $\mathfrak{L}%
_{\mathbf{A}}$ acts on $W^{2,p}\left( \mathbb{R}^{n}\right) $ $1\leq p\leq
\infty $, i.e. $\mathfrak{L}_{\mathbf{A}}:W^{2,p}\left( \mathbb{R}%
^{n}\right) \rightarrow L^{p}\left( \mathbb{R}^{n}\right) $. We define the
angle $\omega $ by%
\begin{equation}
\omega =\sup_{x\in \mathbb{R}^{n}}\left\{ \arg \left( a^{ij}\left( x\right)
\eta _{i}\overline{\eta }_{j}\right) :\eta \in \mathbb{C}^{n}\right\}
=\arctan \left( \frac{\Lambda }{\lambda }\right) <\frac{\pi }{2}.
\label{omega}
\end{equation}

We now recall the definition of $BMO\left( \mathbb{R}^{n}\right) $, a
measurable function $f$ is in $BMO\left( \mathbb{R}^{n}\right) $ if%
\begin{equation*}
\left\Vert f\right\Vert _{\ast }=\sup_{B\subset \mathbb{R}^{n}}\frac{1}{%
\left\vert B\right\vert }\int_{B}\left\vert f\left( x\right)
-f_{B}\right\vert dx<\infty
\end{equation*}%
where $B$ ranges over all balls in $\mathbb{R}^{n}$, and $f_{B}=\frac{1}{%
\left\vert B\right\vert }\int_{B}f\left( x\right) dx$. For a vector or
matrix function, its BMO norm is defined as the maximum of the BMO norms of
each of its components.

We obtain the following a priori estimate for solutions of the fractional
operator.

\begin{theorem}
\label{theorem-nond}For every dimension $n\geq 1$, and constants $0<\lambda
\leq \Lambda <\infty $, there exists $\varepsilon _{0}=\varepsilon
_{0}\left( n,\lambda ,\Lambda \right) >0$ such that if $\mathbf{A}\in 
\mathcal{F}_{n}\left( \lambda ,\Lambda \right) $ and $\left\Vert \mathbf{A}%
\right\Vert _{\ast }<\varepsilon _{0}$ then if $0<\sigma <\frac{p}{p+1}$ and 
$u\in \mathcal{D}\left( \mathfrak{L}_{\mathbf{A}}\right) \subset \mathcal{D}%
\left( \mathfrak{L}_{\mathbf{A}}^{\sigma }\right) \subset L^{p}\left( 
\mathbb{R}^{n}\right) $ for some $1<p<\infty $, is such that $\mathfrak{L}_{%
\mathbf{A}}^{\sigma }u=0$ in a nonempty open set $\Omega \subset \mathbb{R}%
^{n}$, then $u\in C^{1,\alpha }\left( \Omega \right) $ for all $0<\alpha <1$.
\end{theorem}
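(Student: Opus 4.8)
The strategy is to run the three-step Caffarelli–Silvestre program with the main extension theorem of Section \ref{section-main} providing Step \ref{step-1}, existing Schauder-type a priori theory for nondivergence operators with $BMO$-small coefficients providing Step \ref{step-3}, and an explicit reflection computation for Step \ref{step-2}. First I would verify that $\mathfrak{L}_{\mathbf{A}}$, viewed as an unbounded operator on $L^{p}(\mathbb{R}^{n})$ with domain $W^{2,p}(\mathbb{R}^{n})$, is closed and sectorial with angle $\omega<\tfrac{\pi}{2}$ as in \eqref{omega}; the sectoriality with this precise angle follows from the $L^{p}$ resolvent estimates for nondivergence operators with $\|\mathbf{A}\|_{\ast}<\varepsilon_{0}$ (this smallness is exactly what makes the $W^{2,p}$–$L^{p}$ elliptic estimate available, e.g.\ via Krylov or Chiarenza–Frasca–Longo type results). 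Sectoriality then gives a bounded holomorphic functional calculus on $L^{p}$, so $\mathfrak{L}_{\mathbf{A}}^{\sigma}$ is well defined for $0<\sigma<1$, and the main extension theorem applies: the solution $U(\cdot,y)$ of
\begin{equation*}
\mathcal{L}U=\Bigl(-\mathfrak{L}_{\mathbf{A},x}+\tfrac{1-2\sigma}{y}\,\partial_{y}+\partial_{y}^{2}\Bigr)U=0,\qquad U(\cdot,0)=u,
\end{equation*}
exists (given by the subordinated semigroup formula), lies in the appropriate $L^{p}$-valued class, and satisfies $\lim_{y\to 0^{+}}y^{1-2\sigma}\partial_{y}U(\cdot,y)=c_{\sigma}\,\mathfrak{L}_{\mathbf{A}}^{\sigma}u$ in $L^{p}$.

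Next, since $\mathfrak{L}_{\mathbf{A}}^{\sigma}u=0$ in $\Omega$, the Neumann trace vanishes there, so the even reflection $U(x,y)=U(x,-y)$ (Step \ref{step-2}) is a weak solution of $\mathcal{L}U=0$ in $\Omega\times(-1,1)$; I would check that the reflected function inherits the right regularity across $\{y=0\}$ precisely because the weighted conormal derivative has a vanishing jump, so no distributional mass is created on the hyperplane. Rewriting $\mathcal{L}$ in divergence form, $\mathcal{L}U=-|y|^{1-2\sigma}\,\mathrm{div}_{(x,y)}\bigl(|y|^{2\sigma-1}\,\widetilde{\mathbf{A}}(x)\,\nabla_{(x,y)}U\bigr)$ is not literally available in the nondivergence setting, so instead I keep the equation in nondivergence form: $-\mathrm{trace}(\mathbf{A}D_{x}^{2}U)+\partial_{y}^{2}U+\tfrac{1-2\sigma}{y}\partial_{y}U=0$. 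One then treats this as a nondivergence elliptic equation in $n+1$ variables with a lower order drift term $\tfrac{1-2\sigma}{y}\partial_{y}U$; away from the degeneracy this is fine, but near $y=0$ the drift is singular. The key point is that it is a one-dimensional Bessel-type singularity in the $y$ direction only, and the even symmetry of $U$ plus the structure of the equation let one absorb it: for $0<\sigma<\tfrac12$ the coefficient $1-2\sigma>0$ behaves like a good (centering) drift, and the condition $\sigma<\tfrac{p}{p+1}$, i.e.\ $1-2\sigma>1-\tfrac{2p}{p+1}=\tfrac{1-p}{p+1}$, is exactly the threshold under which $|y|^{1-2\sigma}\in A_{2}$-compatible ranges and the $W^{2,p}$ theory for the extended operator survives the weight in the $y$-variable.

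With the $(n+1)$-dimensional equation in hand, Step \ref{step-3} is to invoke the $C^{1,\alpha}$ a priori estimate for $W^{2,p}$ solutions of nondivergence elliptic equations with $BMO$-small principal part — the estimate of the Chiarenza–Frasca–Longo/Escauriaza/Byun type — applied on half-balls $B^{+}_{r}(x_{0},0)$, after freezing out the benign $y$-singularity by the change of variables $t=y^{2\sigma}/(2\sigma)$ which converts $\partial_{y}^{2}+\tfrac{1-2\sigma}{y}\partial_{y}$ into a uniformly elliptic operator in $t$ on the interval away from degeneracies, with coefficients still $BMO$-small. Interior estimates at points with $y\neq 0$ combined with the reflection and the boundary behavior then give $U\in C^{1,\alpha}_{\mathrm{loc}}$ up to $\{y=0\}$ in $\Omega\times\{0\}$ for every $0<\alpha<1$; restricting to $y=0$ yields $u\in C^{1,\alpha}(\Omega)$. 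The main obstacle, and where I would spend the most care, is Step \ref{step-3} in the degenerate direction: one must show that the singular drift $\tfrac{1-2\sigma}{y}\partial_{y}$ does not destroy the $W^{2,p}\to C^{1,\alpha}$ estimate, and that the $BMO$-smallness of $\mathbf{A}(x)$ (which does not depend on $y$) still yields $BMO$-smallness of the full coefficient matrix of the extended equation after the $t$-substitution — this is a localization-plus-weighted-estimate argument that is routine in principle but is precisely the point that forces the restriction $0<\sigma<\tfrac{p}{p+1}$ and the quantitative smallness $\varepsilon_{0}=\varepsilon_{0}(n,\lambda,\Lambda)$.
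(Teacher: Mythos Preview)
Your broad architecture matches the paper: apply the extension theorem, perform the substitution $z=(y/2\sigma)^{2\sigma}$ to kill the first-order term, and then use the Chiarenza--Frasca--Longo interior $W^{2,p}$ estimates together with Sobolev/Morrey to pull back $C^{1,\alpha}$ regularity to $u$. However, the part you flag as ``the main obstacle'' is handled by a different and more concrete mechanism in the paper, and your account of it contains two genuine gaps.

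First, your explanation of the constraint $\sigma<\tfrac{p}{p+1}$ is incorrect. It has nothing to do with $A_2$: in fact $|y|^{1-2\sigma}\in A_2(\mathbb{R})$ for every $0<\sigma<1$. After the substitution the equation becomes $-\mathfrak{L}_{\mathbf{A}}U+z^{2-1/\sigma}U_{zz}=0$, and the extended coefficient $z^{2-1/\sigma}$ is \emph{not} BMO-small near $z=0$ (it degenerates or blows up there), so one cannot apply the $(n{+}1)$-dimensional nondivergence $W^{2,p}$ theory directly to this equation across $\{z=0\}$ as you propose. The paper instead shows $U_{zz}\in L^{r}_{\mathrm{loc}}$ across $z=0$ by H\"older: $\|U_{zz}\|_{L^r}\le \|z^{2-1/\sigma}U_{zz}\|_{L^p}\,\|z^{-(2-1/\sigma)}\|_{L^{rp/(p-r)}}$, and the second factor is finite near $0$ exactly when $(2-\tfrac{1}{\sigma})\tfrac{p}{p-1}<1$, i.e.\ when $\sigma<\tfrac{p}{p+1}$.

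Second, and more importantly, you never invoke the hypothesis $u\in\mathcal{D}(\mathfrak{L}_{\mathbf{A}})$ (as opposed to merely $u\in\mathcal{D}(\mathfrak{L}_{\mathbf{A}}^{\sigma})$). This is essential: it is the clause \eqref{limits3} of the extension theorem that gives $\mathfrak{L}_{\mathbf{A}}U(\cdot,y)\to\mathfrak{L}_{\mathbf{A}}u$ in $L^p$ as $y\to 0$, hence $z^{2-1/\sigma}U_{zz}(\cdot,z)\to\mathfrak{L}_{\mathbf{A}}u$ in $L^p$, providing the uniform $L^p$ bound on $z^{2-1/\sigma}U_{zz}$ needed for the H\"older step above. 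Without this input your argument has no control on $U_{zz}$ near $z=0$, and the route through ``BMO-small $(n{+}1)$-dimensional coefficients'' does not supply one. Once $U_{zz}\in L^{r}_{\mathrm{loc}}$ is in hand, the paper combines it with the slice-wise $x$-regularity from Theorem~\ref{theorem-nd-local} to get $U\in W^{2,r}_{\mathrm{loc}}(\Omega\times\mathbb{R})$ and then bootstraps; this is the step you should replace your degenerate-coefficient argument with.
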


\section{The extension theorem\label{section-main}}

In this section we prove the extension theorem for closed operators on a
Banach space. The existence and properties of the functional calculus for
closed operators (not necessarily sectorial) may be found in \cite{haase06,
bandara-mcintosh10}.

A result similar to the following theorem was first obtained in \cite%
{torrea-stinga-10} for self-adjoint second order differential operators in a
Hilbert space. 

\begin{theorem}
\label{theorem-ABGR}Let $\mathcal{X}$ be a Banach space and let $T$ be a
densely defined closed operator on $\mathcal{D}\left( T\right) \subset 
\mathcal{X}$. For $0<\sigma <1$, let $x_{0}\in \mathcal{D}\left( T^{\sigma
}\right) \subset \mathcal{X}$. The function $x:\left[ 0,\infty \right)
\mapsto \mathcal{X}$ defined by $x\left( 0\right) =x_{0}$ and 
\begin{equation*}
x\left( y\right) =\frac{1}{\Gamma \left( \sigma \right) }\int_{0}^{\infty
}e^{-tT}\left( \left( tT\right) ^{\sigma }x_{0}\right) e^{-\frac{y^{2}}{4t}}%
\frac{dt}{t},
\end{equation*}%
satisfies $x\left( y\right) \in \mathcal{D}\left( T\right) $ for all $y>0$
and it is a solution of the initial value problem: $x\left( 0\right) =x_{0}$%
, and%
\begin{equation}
-Tx\left( y\right) +\frac{1-2\sigma }{y}x^{\prime }\left( y\right)
+x^{\prime \prime }\left( y\right) =0\qquad \qquad \text{in }\left( 0,\infty
\right)  \label{equation}
\end{equation}%
in the sense that $x\left( y\right) \rightarrow x_{0}$ in $\mathcal{X}$ as $%
y\rightarrow 0^{+}$ and the above differential equation holds in $\mathcal{X}
$. Moreover, we have that%
\begin{eqnarray}
\lim_{y\rightarrow 0^{+}}\frac{x\left( y\right) -x_{0}}{y^{2\sigma }} &=&%
\frac{\Gamma \left( -\sigma \right) }{4^{\sigma }\Gamma \left( \sigma
\right) }T^{\sigma }x_{0}=\frac{1}{2\sigma }\lim_{y\rightarrow
0^{+}}y^{1-2\sigma }x^{\prime }\left( y\right) ;  \label{limits} \\
\frac{1}{2\sigma }\lim_{y\rightarrow 0^{+}}y^{2-2\sigma }x^{\prime \prime
}\left( y\right) &=&\frac{2\sigma -1}{4^{\sigma }}\frac{\Gamma \left(
-\sigma \right) }{\Gamma \left( \sigma \right) }T^{\sigma }x_{0};
\label{limits2}
\end{eqnarray}%
the following Poisson formula holds for $x$:%
\begin{equation}
x\left( y\right) =\frac{y^{2\sigma }}{4^{\sigma }\Gamma \left( \sigma
\right) }\int_{0}^{\infty }e^{-tT}e^{-\frac{y^{2}}{4t}}x_{0}\frac{dt}{%
t^{1+\sigma }}=\frac{1}{\Gamma \left( \sigma \right) }\int_{0}^{\infty }e^{-%
\frac{y^{2}}{4r}T}x_{0}\frac{dr}{r^{1-\sigma }};  \label{Poisson}
\end{equation}%
and for all $n\geq 0$ we have the bounds%
\begin{equation}
\left\Vert \frac{d}{dy^{n}}x\left( y\right) \right\Vert \leq \frac{C}{y^{n}}%
\left\Vert x_{0}\right\Vert ,\qquad \text{for all }y>0,
\label{der-estimates}
\end{equation}%
and%
\begin{equation}
\left\Vert T^{\sigma }x\left( y\right) \right\Vert \leq C\left\Vert
T^{\sigma }x_{0}\right\Vert .  \label{frac-estimates}
\end{equation}%
Finally, if $x_{0}\in \mathcal{D}\left( T\right) $, we also have that 
\begin{equation}
\lim_{y\rightarrow 0^{+}}Tx\left( y\right) =Tx_{0}.  \label{limits3}
\end{equation}
\end{theorem}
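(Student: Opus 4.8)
The plan is to work throughout with the absolutely convergent Poisson representation (\ref{Poisson}) rather than with the defining integral directly. Setting $k(y,t)=\frac{y^{2\sigma}}{4^{\sigma}\Gamma(\sigma)}\,t^{-1-\sigma}e^{-y^{2}/4t}$ one has $x(y)=\int_{0}^{\infty}k(y,t)\,e^{-tT}x_{0}\,dt$, and the substitution $r=y^{2}/(4t)$ turns this into $\frac{1}{\Gamma(\sigma)}\int_{0}^{\infty}e^{-(y^{2}/4r)T}x_{0}\,e^{-r}r^{\sigma-1}\,dr$; the identification of either form with the integral in the statement follows from the standard subordination formula for fractional powers recalled in Section \ref{section-appendix}. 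The only inputs needed from the functional calculus are: $\sup_{t>0}\|e^{-tT}\|=:C_{0}<\infty$ and $e^{-tT}z\to z$ as $t\to0^{+}$ for every $z\in\mathcal{X}$; the smoothing property $e^{-tT}\mathcal{X}\subset\mathcal{D}(T)$ with $\|Te^{-tT}\|\le Ct^{-1}$; and the commutation of $e^{-tT}$ with $T$ and with $T^{\sigma}$, together with the closedness of both. Two elementary features of $k$ drive everything: $\int_{0}^{\infty}k(y,t)\,dt=1$ for every $y>0$, and $k$ carries a single factor $e^{-y^{2}/4t}$ that (for $y>0$) vanishes to infinite order as $t\to0^{+}$ while $k=\mathcal{O}(t^{-1-\sigma})$ as $t\to\infty$.

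To see that $x(y)\in\mathcal{D}(T)$ for $y>0$ and that (\ref{equation}) holds, I would first move $T$ inside: since $\|Te^{-tT}x_{0}\|\le Ct^{-1}\|x_{0}\|$ the integral $\int_{0}^{\infty}k(y,t)Te^{-tT}x_{0}\,dt$ converges absolutely, so by closedness $x(y)\in\mathcal{D}(T)$, and integrating by parts in $t$ (the boundary terms $k(y,t)e^{-tT}x_{0}$ vanish at $t=0$ by the Gaussian and at $t=\infty$ since $k\to0$ while $\|e^{-tT}x_{0}\|$ stays bounded) gives $Tx(y)=\int_{0}^{\infty}k_{t}(y,t)\,e^{-tT}x_{0}\,dt$. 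Differentiating under the integral in $y$ (legitimate on compact subsets of $(0,\infty)$, since $\partial_{y}^{j}k$ is again a Gaussian times a polynomial in $y,1/t$, hence integrable against the bounded $e^{-tT}x_{0}$) shows $x\in C^{\infty}\bigl((0,\infty);\mathcal{X}\bigr)$, and using the direct computation $-k_{t}+\frac{1-2\sigma}{y}k_{y}+k_{yy}=0$ for the scalar Poisson kernel we obtain $-Tx(y)+\frac{1-2\sigma}{y}x'(y)+x''(y)=\int_{0}^{\infty}\bigl(-k_{t}+\tfrac{1-2\sigma}{y}k_{y}+k_{yy}\bigr)e^{-tT}x_{0}\,dt=0$. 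The boundary condition follows from $x(y)-x_{0}=\int_{0}^{\infty}k(y,t)\bigl(e^{-tT}x_{0}-x_{0}\bigr)dt=\frac{1}{\Gamma(\sigma)}\int_{0}^{\infty}e^{-r}r^{\sigma-1}\bigl(e^{-(y^{2}/4r)T}x_{0}-x_{0}\bigr)dr$: the integrand tends to $0$ pointwise in $r$ by strong continuity and is dominated by $(C_{0}+1)\|x_{0}\|\,e^{-r}r^{\sigma-1}/\Gamma(\sigma)$, so dominated convergence applies. Running the same argument with $x_{0}$ replaced by $Tx_{0}$ (legitimate once $x_{0}\in\mathcal{D}(T)$, since then $Te^{-tT}x_{0}=e^{-tT}Tx_{0}$ and hence $Tx(y)$ is the Poisson integral of $Tx_{0}$) yields (\ref{limits3}).

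For (\ref{limits}) and (\ref{limits2}) I would return to the defining integral, write it as $\frac{1}{\Gamma(\sigma)}\int_{0}^{\infty}t^{\sigma-1}e^{-tT}(T^{\sigma}x_{0})\,e^{-y^{2}/4t}\,dt$, subtract $x_{0}$, and substitute $r=y^{2}/(4t)$; this gives $\frac{x(y)-x_{0}}{y^{2\sigma}}=\frac{1}{4^{\sigma}\Gamma(\sigma)}\int_{0}^{\infty}(e^{-r}-1)r^{-\sigma-1}e^{-(y^{2}/4r)T}(T^{\sigma}x_{0})\,dr$, and likewise $y^{1-2\sigma}x'(y)=-\frac{2}{4^{\sigma}\Gamma(\sigma)}\int_{0}^{\infty}e^{-r}r^{-\sigma}e^{-(y^{2}/4r)T}(T^{\sigma}x_{0})\,dr$ and $y^{2-2\sigma}x''(y)=\frac{2}{4^{\sigma}\Gamma(\sigma)}\int_{0}^{\infty}(2r-1)e^{-r}r^{-\sigma}e^{-(y^{2}/4r)T}(T^{\sigma}x_{0})\,dr$; each scalar weight is integrable on $(0,\infty)$ (near $0$ because $|e^{-r}-1|\lesssim r$ and $\sigma<1$, near $\infty$ because of $e^{-r}$). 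Letting $y\to0^{+}$, strong continuity and dominated convergence pull $T^{\sigma}x_{0}$ out, multiplied by the Gamma integrals $\int_{0}^{\infty}(e^{-r}-1)r^{-\sigma-1}dr=\Gamma(-\sigma)$, $\int_{0}^{\infty}e^{-r}r^{-\sigma}dr=\Gamma(1-\sigma)=-\sigma\Gamma(-\sigma)$, and $\int_{0}^{\infty}(2r-1)e^{-r}r^{-\sigma}dr=2\Gamma(2-\sigma)-\Gamma(1-\sigma)=\sigma(2\sigma-1)\Gamma(-\sigma)$, which are exactly (\ref{limits}) and (\ref{limits2}). For (\ref{der-estimates}), the scaling $k(y,t)=y^{-2}\psi(t/y^{2})$ with $\psi(s)=\frac{1}{4^{\sigma}\Gamma(\sigma)}s^{-1-\sigma}e^{-1/4s}$ gives $\partial_{y}^{n}k(y,t)=y^{-2-n}\Psi_{n}(t/y^{2})$ for functions $\Psi_{n}$ vanishing to infinite order at $0$ and of size $\mathcal{O}(s^{-1-\sigma})$ at $\infty$, hence in $L^{1}(0,\infty)$; therefore $\|x^{(n)}(y)\|\le C_{0}\|x_{0}\|\int_{0}^{\infty}|\partial_{y}^{n}k(y,t)|\,dt=C_{0}\|\Psi_{n}\|_{L^{1}}\,y^{-n}\|x_{0}\|$. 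Finally (\ref{frac-estimates}) is the case $n=0$ applied to $T^{\sigma}x_{0}$: since $T^{\sigma}$ commutes with $e^{-tT}$ and, by closedness and the bound just proved, with the integral, $T^{\sigma}x(y)=\int_{0}^{\infty}k(y,t)e^{-tT}(T^{\sigma}x_{0})\,dt$, so $\|T^{\sigma}x(y)\|\le C_{0}\|T^{\sigma}x_{0}\|$.

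I expect the genuine difficulty to be bookkeeping rather than any single deep step: one must justify each interchange of $d/dy$, $T$ and $T^{\sigma}$ with the Bochner integrals, and verify that the boundary terms in the integration by parts really vanish — all of which hinge on the quantitative semigroup bounds from Section \ref{section-appendix} and on the Gaussian factor, which is precisely what separates the regime $y>0$ (where $x(y)\in\mathcal{D}(T)$ and (\ref{equation}) holds) from the limit $y\to0^{+}$ (where only $x_{0}\in\mathcal{D}(T^{\sigma})$ is available). A secondary, more conceptual point is reconciling the defining integral with the absolutely convergent form (\ref{Poisson}) in the non-self-adjoint setting, which is exactly where the general functional calculus of \cite{haase06, bandara-mcintosh10} recalled in the Appendix does the work.
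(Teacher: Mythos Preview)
Your proposal is correct and covers all the claims, but it is organized differently from the paper's proof, and the difference is worth noting.

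The paper works primarily with the \emph{defining} integral $x(y)=\frac{1}{\Gamma(\sigma)}\int_0^\infty e^{-tT}(tT)^\sigma x_0\,e^{-y^2/4t}\,\frac{dt}{t}$, interpreting it as $\psi_y(T)x_0$ via the $H^\infty$ functional calculus (this integral is \emph{not} absolutely convergent as a Bochner integral, since $t^{\sigma-1}$ is not integrable at $\infty$; see the Remark after the proof). The equation~(\ref{equation}) is obtained there by recognizing $\bigl(\tfrac{1-2\sigma}{y}\partial_y+\partial_y^2\bigr)e^{-y^2/4t}=\partial_t\bigl(t^{\sigma-1}e^{-y^2/4t}\bigr)/t^{\sigma-1}$ and integrating by parts against $e^{-tT}T^\sigma x_0$. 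The Poisson formula~(\ref{Poisson}) is then derived as a separate step by a contour-integral argument (unfolding the functional-calculus representation~(\ref{eqn:L2-holo-rep})--(\ref{eqn:L2-holo-rep-eta}), changing variables along a complex ray, and shifting the contour back by homotopy).

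You instead take the absolutely convergent Poisson kernel $k(y,t)$ as the starting point, verify the scalar identity $-k_t+\tfrac{1-2\sigma}{y}k_y+k_{yy}=0$ directly, and deduce~(\ref{equation}) from $Tx(y)=\int k_t\,e^{-tT}x_0\,dt$ after one integration by parts. This is cleaner for the differential equation and for the bounds~(\ref{der-estimates}) (your scaling argument $k(y,t)=y^{-2}\psi(t/y^2)$ is more transparent than the paper's), and your treatment of the limits~(\ref{limits})--(\ref{limits2}) via the substitution $r=y^2/4t$ and dominated convergence is essentially the same as the paper's. What you give up is an explicit proof that the defining integral and the Poisson integral agree: the paper's contour argument does precisely this, whereas you invoke ``standard subordination.'' In fact this identification reduces to the scalar identity $\psi_y(z)=\frac{y^{2\sigma}}{4^\sigma\Gamma(\sigma)}\int_0^\infty e^{-tz}e^{-y^2/4t}t^{-1-\sigma}\,dt$ for $z\in\Sigma_{\pi/2-\varepsilon}$ (a change of variables shows both sides equal $\frac{1}{\Gamma(\sigma)}\int_0^\infty e^{-r}r^{\sigma-1}e^{-y^2z/4r}\,dr$), after which the $H^\infty$ calculus gives equality of the operators; so your deferral is legitimate, but you should say this rather than leave it implicit, since it is exactly the point where the non-self-adjoint generality matters.
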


\begin{proof}
First note that since $e^{-tz}\left( tz\right) ^{\sigma }\in H_{0}^{\infty
}\left( \Sigma _{\pi /2-\varepsilon }\right) $ (see the Appendix for details
on the functional calculus) for any fixed $0<\varepsilon <\pi /2$, we have
that $x\left( y\right) =\psi _{y}\left( T\right) x_{0}$, where%
\begin{equation}
\psi _{y}\left( z\right) =\frac{1}{\Gamma \left( \sigma \right) }%
\int_{0}^{\infty }e^{-tz}\left( tz\right) ^{\sigma }e^{-\frac{y^{2}}{4t}}%
\frac{dt}{t}\in H^{\infty }\left( \Sigma _{\pi /2-\varepsilon }\right) \quad 
\text{uniformly in }y>0  \label{eq-psi}
\end{equation}%
so by (\ref{eqn:L2-holo-bd}) it follows that $\left\Vert x\left( y\right)
\right\Vert \leq C\left\Vert x_{0}\right\Vert $ where $C=\left\Vert \psi
\right\Vert _{L^{\infty }\left( \Sigma _{\pi /2-\varepsilon }\right) }$ and
so $x\left( y\right) $ is well defined in $\mathcal{X}$. Since 
\begin{equation*}
\lim_{y\rightarrow 0^{+}}\psi _{y}\left( z\right) =\frac{1}{\Gamma \left(
\sigma \right) }\int_{0}^{\infty }e^{-tz}\left( tz\right) ^{\sigma }\frac{dt%
}{t}=\frac{1}{\Gamma \left( \sigma \right) }\int_{0}^{\infty
}e^{-s}s^{\sigma }\frac{ds}{s}=1,
\end{equation*}%
we have that $x\left( y\right) =\psi _{y}\left( T\right) x_{0}\rightarrow
x_{0}$ in $\mathcal{X}$ as $y\rightarrow 0^{+}$. Also, since $e^{-tT}:%
\mathcal{X}\rightarrow \mathcal{D}\left( T\right) $ it follows that $x\left(
y\right) \in \mathcal{D}\left( T\right) $ for all $y>0$. Moreover, from the
estimates 
\begin{equation*}
\left\vert \frac{d^{m}}{dy^{m}}e^{-\frac{y^{2}}{4t}}\right\vert \leq \frac{%
C_{m}}{y^{m}}\qquad m=0,1,2,\dots 
\end{equation*}%
with $C_{m}$ independent of $t$, it follows that derivatives of $x\left(
y\right) $ may be computed by taking derivatives inside the integral:%
\begin{equation}
\frac{d^{n}}{dy^{n}}x\left( y\right) =\frac{1}{\Gamma \left( \sigma \right) }%
\int_{0}^{\infty }e^{-tT}\left( \left( tT\right) ^{\sigma }x_{0}\right)
\left( \frac{d^{n}}{dy^{n}}e^{-\frac{y^{2}}{4t}}\right) \frac{dt}{t},
\label{eq-firstd}
\end{equation}%
for all $n=0,1,2,\dots $. In particular, it follows as before by the
boundedness of the functional calculus (\ref{eqn:L2-holo-bd}) that%
\begin{equation*}
\left\Vert \frac{d^{n}}{dy^{n}}x\left( y\right) \right\Vert \leq \frac{C_{n}%
}{y^{n}}\left\Vert x_{0}\right\Vert \qquad \text{for all }n\geq 0.
\end{equation*}

This proves (\ref{der-estimates}). We remark that for an explicit resolution
of the fractional powers $T^{\sigma }$ we may use (\ref{eq-fract}). Now,
since%
\begin{eqnarray*}
\frac{1-2\sigma }{y}\left( \frac{d}{dy}e^{-\frac{y^{2}}{4t}}\right) +\left( 
\frac{d^{2}}{dy^{2}}e^{-\frac{y^{2}}{4t}}\right)  &=&\left( -\frac{1-2\sigma 
}{y}\frac{y}{2t}-\frac{1}{2t}+\frac{y^{2}}{4t^{2}}\right) e^{-\frac{y^{2}}{4t%
}} \\
&=&\left( \frac{\sigma -1}{t}+\frac{y^{2}}{4t^{2}}\right) e^{-\frac{y^{2}}{4t%
}}
\end{eqnarray*}%
we have%
\begin{eqnarray*}
&&\frac{1-2\sigma }{y}x^{\prime }\left( y\right) +x^{\prime \prime }\left(
y\right)  \\
&=&\frac{1}{\Gamma \left( \sigma \right) }\int_{0}^{\infty }e^{-tT}\left(
T^{\sigma }x_{0}\right) t^{\sigma -1}\left( \frac{\sigma -1}{t}+\frac{y^{2}}{%
4t^{2}}\right) e^{-\frac{y^{2}}{4t}}dt \\
&=&\frac{1}{\Gamma \left( \sigma \right) }\int_{0}^{\infty }e^{-tT}\left(
T^{\sigma }x_{0}\right) \left( \frac{d}{dt}t^{\sigma -1}e^{-\frac{y^{2}}{4t}%
}\right) ~dt \\
&=&\left. \frac{1}{\Gamma \left( \sigma \right) }e^{-tT}\left( T^{\sigma
}x_{0}\right) t^{\sigma -1}e^{-\frac{y^{2}}{4t}}\right\vert _{t=0}^{\infty }
\\
&&+\frac{1}{\Gamma \left( \sigma \right) }\int_{0}^{\infty }\left( -\frac{d}{%
dt}e^{-tT}\right) \left( T^{\sigma }x_{0}\right) \left( t^{\sigma -1}e^{-%
\frac{y^{2}}{4t}}\right) dt \\
&=&\frac{1}{\Gamma \left( \sigma \right) }\int_{0}^{\infty }Te^{-tT}\left(
\left( tT\right) ^{\sigma }x_{0}\right) e^{-\frac{y^{2}}{4t}}\frac{dt}{t} \\
&=&Tx\left( y\right) .
\end{eqnarray*}%
This proves that $x\left( y\right) $ satisfies equation (\ref{equation}).
Now, since 
\begin{equation*}
\frac{1}{\Gamma \left( \sigma \right) }\int_{0}^{\infty }e^{-tz}\left(
tz\right) ^{\sigma }\frac{dt}{t}=1,\qquad z\in \Sigma _{\pi /2},
\end{equation*}%
we can write%
\begin{equation*}
x_{0}=\frac{1}{\Gamma \left( \sigma \right) }\int_{0}^{\infty }e^{-tT}\left(
tT\right) ^{\sigma }x_{0}\frac{dt}{t},
\end{equation*}%
and therefore%
\begin{eqnarray*}
\lim_{y\rightarrow 0^{+}}\frac{x\left( y\right) -x_{0}}{y^{2\sigma }}
&=&\lim_{y\rightarrow 0^{+}}\frac{1}{\Gamma \left( \sigma \right) }%
\int_{0}^{\infty }e^{-tT}\left( \left( tT\right) ^{\sigma }x_{0}\right)
\left( \frac{e^{-\frac{y^{2}}{4t}}-1}{y^{2\sigma }}\right) \frac{dt}{t} \\
&=&\lim_{y\rightarrow 0^{+}}\frac{1}{4^{\sigma }\Gamma \left( \sigma \right) 
}\int_{0}^{\infty }e^{-tT}\left( T^{\sigma }x_{0}\right) \left( \frac{e^{-%
\frac{y^{2}}{4t}}-1}{\left( \frac{y^{2}}{4t}\right) ^{\sigma }}\right) \frac{%
dt}{t}.
\end{eqnarray*}%
Performing the change of variables $s=y^{2}/\left( 4t\right) $,%
\begin{equation*}
\lim_{y\rightarrow 0^{+}}\frac{x\left( y\right) -x_{0}}{y^{2\sigma }}%
=\lim_{y\rightarrow 0^{+}}\frac{1}{4^{\sigma }\Gamma \left( \sigma \right) }%
\int_{0}^{\infty }e^{-\frac{y^{2}}{4s}T}\left( T^{\sigma }x_{0}\right)
\left( \frac{e^{-s}-1}{s^{\sigma }}\right) \frac{ds}{s}.
\end{equation*}%
Since for any fixed $\varepsilon >0$ $e^{-\frac{y^{2}}{4s}T}\rightarrow I$
as $y\rightarrow 0^{+}$, strongly and uniformly on $s\in \left[ \varepsilon
,\infty \right) $, and $\left\Vert e^{-\frac{y^{2}}{4s}T}\right\Vert \leq 1$
for all $s,y>0$, we conclude that%
\begin{eqnarray*}
\lim_{y\rightarrow 0^{+}}\frac{x\left( y\right) -x_{0}}{y^{2\sigma }} &=&%
\frac{1}{4^{\sigma }\Gamma \left( \sigma \right) }T^{\sigma
}x_{0}\int_{0}^{\infty }\left( \frac{e^{-s}-1}{s^{\sigma }}\right) \frac{ds}{%
s} \\
&=&\frac{\Gamma \left( -\sigma \right) }{4^{\sigma }\Gamma \left( \sigma
\right) }T^{\sigma }x_{0}.
\end{eqnarray*}%
This establishes the first equality in (\ref{limits}). Similarly, from (\ref%
{eq-firstd}) we have%
\begin{eqnarray*}
\frac{1}{2\sigma }\lim_{y\rightarrow 0^{+}}y^{1-2\sigma }x^{\prime }\left(
y\right)  &=&-\frac{1}{2\sigma }\frac{1}{\Gamma \left( \sigma \right) }%
\lim_{y\rightarrow 0^{+}}\int_{0}^{\infty }e^{-tT}\left( \left( tT\right)
^{\sigma }x_{0}\right) \frac{y^{2-2\sigma }}{2t}e^{-\frac{y^{2}}{4t}}\frac{dt%
}{t} \\
&=&-\frac{1}{\sigma }\frac{1}{4^{\sigma }\Gamma \left( \sigma \right) }%
\lim_{y\rightarrow 0^{+}}\int_{0}^{\infty }e^{-tT}\left( T^{\sigma
}x_{0}\right) \left( \frac{y^{2}}{4t}\right) ^{1-\sigma }e^{-\frac{y^{2}}{4t}%
}\frac{dt}{t} \\
&=&-\frac{1}{\sigma }\frac{1}{4^{\sigma }\Gamma \left( \sigma \right) }%
\lim_{y\rightarrow 0^{+}}\int_{0}^{\infty }e^{-\frac{y^{2}}{4s}T}\left(
T^{\sigma }x_{0}\right) s^{1-\sigma }e^{-s}\frac{ds}{s} \\
&=&-\frac{1}{\sigma }\frac{1}{4^{\sigma }\Gamma \left( \sigma \right) }%
T^{\sigma }x_{0}\int_{0}^{\infty }s^{1-\sigma }e^{-s}\frac{ds}{s} \\
&=&-\frac{1}{\sigma }\frac{\Gamma \left( 1-\sigma \right) }{4^{\sigma
}\Gamma \left( \sigma \right) }T^{\sigma }x_{0}=\frac{\Gamma \left( -\sigma
\right) }{4^{\sigma }\Gamma \left( \sigma \right) }T^{\sigma }x_{0},
\end{eqnarray*}%
which establishes the second equality in (\ref{limits}). In this manner we
also obtain%
\begin{eqnarray*}
\frac{1}{2\sigma }y^{2-2\sigma }x^{\prime \prime }\left( y\right)  &=&\frac{%
4^{1-\sigma }}{2\sigma \Gamma \left( \sigma \right) }\int_{0}^{\infty }e^{-%
\frac{y^{2}}{4s}T}\left( T^{\sigma }x_{0}\right) s^{2-\sigma }e^{-s}\frac{ds%
}{s} \\
&&-\frac{4^{1-\sigma }}{4\sigma \Gamma \left( \sigma \right) }%
\int_{0}^{\infty }e^{-\frac{y^{2}}{4s}T}\left( T^{\sigma }x_{0}\right)
s^{1-\sigma }e^{-s}\frac{ds}{s}
\end{eqnarray*}%
which yields (\ref{limits2}) as $y\rightarrow 0^{+}$.

Now, from (\ref{eqn:L2-holo-rep}) and (\ref{eqn:L2-holo-rep-eta}) we have
the representation (all the expressions are valued at $x_{0}$)%
\begin{eqnarray*}
x\left( y\right) &=&\frac{1}{\Gamma \left( \sigma \right) }\int_{0}^{\infty
}e^{-tT}\left( tT\right) ^{\sigma }e^{-\frac{y^{2}}{4t}}\frac{dt}{t} \\
&=&\frac{1}{\Gamma \left( \sigma \right) }\int_{0}^{\infty }\int_{\Gamma
_{\pi /2-\theta }}e^{-zT}\eta (z)\,dze^{-\frac{y^{2}}{4t}}\frac{dt}{t} \\
&=&\frac{1}{\Gamma \left( \sigma \right) }\int_{0}^{\infty }\int_{\Gamma
_{\pi /2-\theta }}e^{-zT}\frac{1}{2\pi i}\int_{\gamma _{\nu }(z)}e^{\zeta
z}e^{-t\zeta }\left( t\zeta \right) ^{\sigma }\,d\zeta \,dz~e^{-\frac{y^{2}}{%
4t}}\frac{dt}{t}.
\end{eqnarray*}%
Since $\zeta \in \gamma _{\nu }\left( z\right) $ does not vanish, and using
Fubini's theorem, we can perform the change of variables $t=\frac{y^{2}}{%
4\zeta \rho }$, so $dt=-\frac{y^{2}}{4\zeta \rho ^{2}}d\rho $, to obtain%
\begin{eqnarray*}
x\left( y\right) &=&\frac{1}{\Gamma \left( \sigma \right) }\int_{\Gamma
_{\pi /2-\theta }}e^{-zT}\frac{1}{2\pi i}\int_{\gamma _{\nu }(z)}e^{\zeta
z}\int_{0}^{\infty }e^{-t\zeta }\left( t\zeta \right) ^{\sigma }\,e^{-\frac{%
y^{2}}{4t}}\frac{dt}{t}d\zeta \,dz~ \\
&=&\frac{1}{\Gamma \left( \sigma \right) }\int_{\Gamma _{\pi /2-\theta
}}e^{-zT}\frac{1}{2\pi i}\int_{\gamma _{\nu }(z)}e^{\zeta z}\int_{\overline{%
\gamma _{\nu }\left( z\right) }}e^{-\frac{y^{2}}{4\rho }}\left( \frac{y^{2}}{%
4\rho }\right) ^{\sigma }\,e^{-\zeta \rho }\frac{d\rho }{\rho }d\zeta \,dz,
\end{eqnarray*}%
where $\overline{\gamma _{\nu }\left( z\right) }$ is the conjugate path to $%
\gamma _{\nu }\left( z\right) $. By homotopy and the Cauchy's integral
formula, the path of integration $\overline{\gamma _{\nu }\left( z\right) }$
may be replaced by $\mathbb{R}^{+}$, so it follows that%
\begin{eqnarray*}
x\left( y\right) &=&\frac{1}{\Gamma \left( \sigma \right) }\int_{\Gamma
_{\pi /2-\theta }}e^{-zT}\frac{1}{2\pi i}\int_{\gamma _{\nu }(z)}e^{\zeta
z}\int_{0}^{\infty }e^{-\frac{y^{2}}{4t}}\left( \frac{y^{2}}{4t}\right)
^{\sigma }\,e^{-\zeta t}\frac{dt}{t}d\zeta \,dz \\
&=&\frac{1}{\Gamma \left( \sigma \right) }\left( \frac{y^{2}}{4}\right)
^{\sigma }\int_{0}^{\infty }\left( \int_{\Gamma _{\pi /2-\theta }}e^{-zT}%
\frac{1}{2\pi i}\int_{\gamma _{\nu }(z)}e^{\zeta z}\,e^{-\zeta t}d\zeta
\,dz\right) e^{-\frac{y^{2}}{4t}}\frac{dt}{t^{1+\sigma }} \\
&=&\frac{1}{\Gamma \left( \sigma \right) }\left( \frac{y^{2}}{4}\right)
^{\sigma }\int_{0}^{\infty }e^{-tT}e^{-\frac{y^{2}}{4t}}\frac{dt}{%
t^{1+\sigma }}
\end{eqnarray*}%
where we again used (\ref{eqn:L2-holo-rep}) and (\ref{eqn:L2-holo-rep-eta}).
This proves the first equality in (\ref{Poisson}). The second equality in (%
\ref{Poisson}) follows upon implementing the change of variables $t=\frac{%
y^{2}}{4s}$ on the second term.

Finally, from the identity $x\left( y\right) =\psi _{y}\left( T\right) x_{0}$
we have that $Tx\left( y\right) ={\varphi }_{y}\left( T\right) x_{0}$ with ${%
\varphi }_{y}\left( z\right) =z\psi _{y}\left( z\right) $. Now, ${\varphi }%
_{y}\left( z\right) \rightarrow z$ locally uniformly in $\Sigma _{\pi
/2-\varepsilon }$, so if $x_{0}\in \mathcal{D}\left( T\right) $ then (\ref%
{limits3}) follows.
\end{proof}

\begin{remark}
The solution $\psi \left( T\right) x_{0}$ with $\psi \left( z\right) $ given
by (\ref{eq-psi}) must be understood in a limit sense in the Banach space $%
\mathcal{X}$. Note that $\psi \left( 0\right) =1,$ so $\psi $ does not
belong to $H_{0}^{\infty }\left( \Sigma _{\pi /2-\varepsilon }\right) $.
Indeed, the change of variables $s=tz$ yields 
\begin{equation*}
\psi \left( z\right) =\frac{1}{\Gamma \left( \sigma \right) }%
\int_{0}^{\infty }e^{-s}s^{\sigma }e^{-\frac{y^{2}z}{4s}}\frac{ds}{s}
\end{equation*}%
from which we can easily see that $\psi \left( z\right) \rightarrow 1$ as $%
z\rightarrow 0$ in $\Sigma _{\pi /2-\varepsilon }$ (and that $\psi \left(
z\right) \rightarrow 0$ as $z\rightarrow \infty $ in $\Sigma _{\pi
/2-\varepsilon }$). Thus, $\psi \left( T\right) $ shall be understood as a
limit of $\psi _{n}\left( T\right) $ where $\psi _{n}\in H_{0}^{\infty
}\left( \Sigma _{\pi /2-\varepsilon }\right) $ and $\psi _{n}\rightarrow
\psi $ uniformly on compact subsets of $\Sigma _{\pi /2-\varepsilon }$. For
example, it would suffice to take $\psi _{n}\left( z\right) =\psi \left(
z\right) -e^{-nz}$.
\end{remark}

\begin{remark}
Note that the first Poisson formula in (\ref{Poisson}) says that the
solution $x\left( y\right) $ is $x\left( y\right) =\Psi _{y}\left( T\right)
x_{0}$ where the operator $\Psi _{y}\left( T\right) $ is given by%
\begin{equation*}
\Psi _{y}\left( T\right) =\frac{y^{2\sigma }}{4^{\sigma }\Gamma \left(
\sigma \right) }\int_{0}^{\infty }e^{-tT}e^{-\frac{y^{2}}{4t}}\frac{dt}{%
t^{1+\sigma }}.
\end{equation*}%
This operator is indeed the Laplace transform $\mathcal{L}\left(
g_{y}\right) \left( z\right) $ of the function $g_{y}\left( t\right) =\frac{%
y^{2\sigma }}{4^{\sigma }\Gamma \left( \sigma \right) }t^{-\left( 1+\sigma
\right) }e^{-\frac{y^{2}}{4t}}\in L^{1}\left( \left[ 0,\infty \right)
\right) $. The mapping $g\rightarrow \mathcal{L}\left( g\right) $ is called
the \emph{Phillips calculus} for $T$ (see 3.3 in \cite{haase06}).
\end{remark}

\section{Applications to Differential Operators\label{section-applications}}

In this section we provide some applications of the general extension
Theorem \ref{theorem-ABGR} to three different types of differential
operators to illustrate its versatility.

\subsection{Non-symmetric weighted elliptic operators\label%
{section-weighted-elliptic}}

Let $\Omega $ be an open subset of $\mathbb{R}^{n}$. For indexes $1\leq
p\leq \infty $, and a locally integrable nonnegative function $w$ we denote
by $L^{p}\left( \Omega ,w\right) $ the space of measurable functions $f$ on $%
\Omega $ such that $\left\vert f\right\vert ^{p}$ is integrable with respect
to the measure $dw=w\left( x\right) dx$. Given a weight $w\in A_{2}\left(
\Omega \right) $, the weighted Sobolev spaces $\mathcal{H}^{1}\left( \Omega
,w\right) $ consists of those $L^{2}\left( \Omega ,w\right) $ functions $f$
such that $\left\vert \nabla f\right\vert \in L^{2}\left( \Omega ,w\right) $%
. This space is a Hilbert space with inner product%
\begin{equation}
\left\langle u,v\right\rangle _{\mathcal{H}^{1}\left( \Omega ,w\right)
}=\int_{\Omega }uv~dw+\int_{\Omega }\nabla u\cdot \nabla v~dw.
\label{eq-H1w-inner}
\end{equation}%
See \cite{miller82, fabes-kenig-serapioni82} for more details about these
weighted Sobolev spaces. We also adopt the conventions $L^{2}\left( w\right)
=L^{2}\left( \mathbb{R}^{n},w\right) $ and $\mathcal{H}^{1}\left( w\right) =%
\mathcal{H}^{1}\left( \mathbb{R}^{n},w\right) $. In what follows we will
take $\Omega =\mathbb{R}^{n}$ for simplicity, but all the definitions and
properties below can specialized to any subdomain $\Omega \subset \mathbb{R}%
^{n}$.

Recall that given $0<\lambda \leq \Lambda <\infty ,$ $\mathcal{F}_{n}\left(
\lambda ,\Lambda \right) $ denotes the set of real valued $n\times n$
matrices which eigenvalues lie in the interval $\left[ \lambda ,\Lambda %
\right] $. Given any $\mathbf{A}\in \mathcal{F}_{n}\left( \lambda ,\Lambda
\right) $ we can more define a bilinear form $\mathcal{E}_{\mathbf{A},w}$ on 
$\mathcal{H}^{1}\left( w\right) \subset L^{2}\left( w\right) $ by%
\begin{equation*}
\mathcal{E}_{\mathbf{A},w}\left( u,v\right) =\left\langle \mathbf{A}\left(
x\right) \nabla u,\nabla v\right\rangle _{w}=\int_{\mathbb{R}^{n}}\mathbf{A}%
\left( x\right) \nabla u\left( x\right) \cdot \nabla {v}\left( x\right) ~dw.
\end{equation*}%
We will check that $\mathcal{E}_{\mathbf{A},w}$ satisfies conditions (\ref%
{E1-lb}), (\ref{E2-sector}), and (\ref{E3-complete}) from Definition \ref%
{def-Dirichlet}. By ellipticity (\ref{ellipticity}) it is clear that $%
\mathcal{E}_{\mathbf{A},w}$ is nonnegative, and, moreover, $\mathbf{A}\left(
x\right) \xi \cdot \xi $ is an inner product in $\mathbb{R}^{n}$, hence by
Cauchy-Schwarz $\left\vert \mathcal{E}_{\mathbf{A},w}\left( u,v\right)
\right\vert \leq \mathcal{E}_{\mathbf{A},w}\left( u,u\right) ^{1/2}\mathcal{E%
}_{\mathbf{A},w}\left( v,v\right) ^{1/2}$ for all $u,v\in \mathcal{H}%
^{1}\left( w\right) $. Thus $\mathcal{E}_{\mathbf{A},w}$ satisfies
conditions (\ref{E1-lb}) and (\ref{E2-sector}). On the other hand, since the
domain of $\mathcal{E}_{\mathbf{A},w}$ is $\mathcal{H}^{1}\left( w\right) $,
a Hilbert space with inner product (\ref{eq-H1w-inner}), which is equivalent
to 
\begin{equation*}
\mathcal{E}_{\left( \mathbf{A},w\right) ,\alpha }^{\left( s\right) }\left(
u,v\right) =\frac{1}{2}\left( \mathcal{E}_{\left( \mathbf{A},w\right)
}^{\left( s\right) }\left( u,v\right) +\mathcal{E}_{\left( \mathbf{A}%
,w\right) }^{\left( s\right) }\left( v,u\right) \right) +\alpha \left\langle
u,v\right\rangle _{w}
\end{equation*}%
for all $\alpha >0$, we have that $\mathcal{E}_{\mathbf{A},w}$ also
satisfies (\ref{E3-complete}). Hence, $\mathcal{E}_{\mathbf{A},w}$ is a
nonnegative bilinear form satisfying the conditions of Theorem \ref%
{theorem-Dirichlet}, the associated operator $L_{w}=L_{\mathbf{A},w}$ is
closed and it has domain $\mathcal{D}\left( L_{w}\right) $ which is dense in 
$L^{2}\left( w\right) $, and for all $u\in \mathcal{D}\left( L_{w}\right) $
we have that $f\left( x\right) =L_{w}u\left( x\right) \in L^{2}\left(
w\right) $ satisfies 
\begin{equation*}
\int_{\mathbb{R}^{n}}\mathbf{A}\left( x\right) \nabla u\left( x\right) \cdot
\nabla {\varphi }\left( x\right) ~dw=\int_{\mathbb{R}^{n}}f\left( x\right) ~{%
\varphi }\left( x\right) ~dw,\qquad \text{for all }{\varphi }\in \mathcal{H}%
^{1}\left( w\right) .
\end{equation*}%
It is in this weak sense (integrating by parts) that we say that $%
L_{w}u\left( x\right) =-\frac{1}{w}\func{div}w\mathbf{A}\left( x\right)
\nabla u\left( x\right) =f\left( x\right) $. Notice that Theorem \ref%
{theorem-Dirichlet} also yields the adjoint operator $\widehat{L}_{w}$ is
closed with dense domain $\mathcal{D}\left( \widehat{L}_{w}\right) \subset
L^{2}\left( w\right) $ such that%
\begin{equation*}
\int_{\mathbb{R}^{n}}\mathbf{A}\left( x\right) \nabla {\varphi }\left(
x\right) \cdot \nabla {v}\left( x\right) ~dw=\int_{\mathbb{R}^{n}}{\varphi }%
\left( x\right) ~\widehat{L}_{w}v\left( x\right) ~dw,
\end{equation*}%
for all $v\in \mathcal{D}\left( \widehat{L}_{w}\right) ,~{\varphi }\in 
\mathcal{H}^{1}\left( w\right) $. Moreover, by Corollary \ref{coro-sectorial}
it follows that bot $L_{w}$ and $\widehat{L}_{w}$ are sectorial of angle $%
\pi /2-\arctan \left( 1/K\right) $ and they generates strongly continuous
semigroups in $\left[ 0,\infty \right) $ and a contractive holomorphic
semigroups in $\Sigma _{\arctan \left( 1/K\right) }$.

Fabes, Kenig, and Serapioni extended the De Giorgi-Nash-Moser theory to
symmetric weighted elliptic operators $L_{w}$ with $A_{2}$ weights (and with
quasi-conformal weights) \cite{fabes-kenig-serapioni82}. As observed in
their paper, the Moser iteration scheme can more generally be implemented as
far as proper versions of the following a-priori estimates are available:

\begin{enumerate}
\item \bigskip A Caccioppoli inequality. For every $u\in \mathcal{H}%
^{1}\left( w\right) $ and $C_{0}^{\infty }\left( \mathbb{R}^{n}\right) $
function ${\varphi }$%
\begin{equation*}
\int {\varphi }^{2}\left\vert \nabla u\right\vert ^{2}dw\leq C\int
\left\vert {u}\left( x\right) \right\vert ^{2}\left( \left\vert \nabla {%
\varphi }\right\vert ^{2}+{\varphi }^{2}\right) dw~+C\int \left\vert
L_{w}u\right\vert ^{2}~{\varphi }^{2}~dw,
\end{equation*}%
where $C=C\left( \lambda ,\Lambda \right) .$

\item A Sobolev inequality. There exist $p\in \left[ 1,\infty \right) $, $%
k>1 $, and $C=C\left( \lambda ,\Lambda ,w\right) >0$ such that for all balls 
$B_{r}=B_{r}\left( x\right) \subset \mathbb{R}^{n}$ and any $u\in
C_{0}^{\infty }\left( B_{r}\right) $%
\begin{equation*}
\left( \frac{1}{w\left( B_{r}\right) }\int_{B_{r}}\left\vert u\right\vert
^{pk}dw\right) ^{\frac{1}{pk}}\leq Cr\left( \frac{1}{w\left( B_{r}\right) }%
\int_{B_{r}}\left\vert \nabla u\right\vert ^{p}dw\right) ^{\frac{1}{p}}.
\end{equation*}

\item A Poincar\'{e} inequality. There exist $p\in \left[ 1,\infty \right) $%
, $k>1$, and $C=C\left( \lambda ,\Lambda ,w\right) >0$ such that for all
balls $B_{r}=B_{r}\left( x\right) \subset \mathbb{R}^{n}$ and any Lipschitz
function $u$%
\begin{equation*}
\left( \frac{1}{w\left( B_{r}\right) }\int_{B_{r}}\left\vert
u-u_{B_{r}}\right\vert ^{pk}dw\right) ^{\frac{1}{pk}}\leq Cr\left( \frac{1}{%
w\left( B_{r}\right) }\int_{B_{r}}\left\vert \nabla u\right\vert
^{p}dw\right) ^{\frac{1}{p}},
\end{equation*}%
where $u_{B_{r}}=\frac{1}{w\left( B_{r}\right) }\int_{B_{r}}u~dw$.
\end{enumerate}

The De Giorgi-Nash-Moser techniques have been applied in increasing
generality to degenerate elliptic equations, semi-linear equations and fully
nonlinear equations. In most applications, Caccioppoli estimates are an easy
consequence of the definition of weak solutions and integration by parts.
Sobolev and Poincar\'{e} inequalities have been established in a wide
variety of settings, including those of weighted elliptic operators with
symmetric coefficients \cite{fabes-kenig-serapioni82} and subelliptic
equations \cite{sawyer-wheeden-06, rodney10}. As pointed in \cite{KKPT2000},
is was first observed by Morrey \cite{morrey66} (chapter 5) that the De
Giorgi-Nash-Moser theory also holds for solutions to elliptic divergence
form equations without the assumption that the matrix $\mathbf{A}\left(
x\right) $ is symmetric, this fact easily extends to the weighted elliptic
operators defined above. In particular, we have that local boundedness and
Harnack's inequality hold in this setting.

\begin{theorem}[Boundedness of solutions and Harnack's inequality for
weighted elliptic operators]
\label{theorem-FKS} Let $w\in A_{2}$ and let $\Omega \subset \mathbb{R}^{n}$
open. If $u\in \mathcal{H}^{1}\left( \Omega ,w\right) $ is a solution of $%
L_{w}u=0$ in $\Omega $, then there exists a constant $M=M\left( \lambda
,\Lambda ,\left[ w\right] _{A_{2}}\right) >0$ such that for every ball $%
B_{2r}\left( y\right) \subset \Omega $ 
\begin{equation*}
\max_{B_{r}\left( y\right) }\left\vert u\left( x\right) \right\vert \leq M%
\frac{1}{w\left( B_{2r}\left( y\right) \right) }\int_{B_{2r}\left( y\right)
}\left\vert u\right\vert ^{2}dw\left( x\right) .
\end{equation*}%
Moreover, if $u$ is nonnegative then for every ball $B_{2r}\left( y\right)
\subset \Omega $ 
\begin{equation*}
\max_{B_{r}\left( y\right) }u\leq M\min_{B_{r}\left( y\right) }u.
\end{equation*}
\end{theorem}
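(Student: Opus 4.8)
The plan is to run the classical Moser iteration, the key observation being that \emph{none} of its steps uses the symmetry of $\mathbf{A}$. As noted by Morrey \cite{morrey66} and recorded in \cite{KKPT2000}, in every test-function computation one only invokes the coercivity of the ``diagonal'' quadratic form, $\mathbf{A}(x)\xi\cdot\xi\ge\lambda|\xi|^{2}$, together with the bound on the ``off-diagonal'' pairing, $|\mathbf{A}(x)\xi\cdot\eta|\le\Lambda|\xi||\eta|$; neither of these is affected by the antisymmetric part of $\mathbf{A}$. Moreover the weighted Sobolev and Poincar\'e inequalities (2)--(3) stated above are statements about the $A_{2}$ weight $w$ alone and are independent of $\mathbf{A}$, so they are available verbatim from \cite{fabes-kenig-serapioni82} with constants depending only on $n$ and $[w]_{A_{2}}$.

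First I would establish the weighted Caccioppoli inequality (1) for nonsymmetric $\mathbf{A}$: given a weak solution $u$ of $L_{w}u=0$ and $\varphi\in C_{0}^{\infty}(\mathbb{R}^{n})$, testing the weak formulation against $\varphi^{2}u$ gives $\int\varphi^{2}\,\mathbf{A}\nabla u\cdot\nabla u\,dw=-2\int\varphi\,u\,\mathbf{A}\nabla u\cdot\nabla\varphi\,dw$; bounding the left side below by $\lambda\int\varphi^{2}|\nabla u|^{2}\,dw$, the right side above by $2\Lambda\int\varphi|u||\nabla u||\nabla\varphi|\,dw$, and absorbing via Young's inequality yields $\int\varphi^{2}|\nabla u|^{2}\,dw\le C(\lambda,\Lambda)\int|u|^{2}|\nabla\varphi|^{2}\,dw$. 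The same computation applied to the subsolutions $|u|^{\beta}$ for $\beta\ge1$, and, after replacing $u$ by $u+\varepsilon$, to $(u+\varepsilon)^{-1}$ and to $\log(u+\varepsilon)$, produces the families of Caccioppoli estimates that feed the iteration, all with constants depending only on $\lambda,\Lambda$ (and the iteration exponent).

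Next I would combine these with the Sobolev inequality (2) in the usual way: for a nonnegative subsolution $v$, the resulting reverse-H\"older inequalities, iterated over a sequence of balls $B_{\rho_{k}}\searrow B_{r}(y)$ contained in $B_{2r}(y)$, give $\sup_{B_{r}(y)}v\le C\bigl(\tfrac{1}{w(B_{2r}(y))}\int_{B_{2r}(y)}v^{2}\,dw\bigr)^{1/2}$ with $C=C(\lambda,\Lambda,[w]_{A_{2}})$; taking $v=|u|$ gives the first assertion. For nonnegative $u$, one applies the resulting sup-bound (in the version with a small exponent $p_{0}>0$ in place of $2$, reachable either by iterating from $p_{0}$ or by a Bombieri--Giusti crossover) to both $u+\varepsilon$ and $(u+\varepsilon)^{-1}$, obtaining $\sup_{B_{r}}(u+\varepsilon)\le C\bigl(\tfrac{1}{w(B_{3r/2})}\int_{B_{3r/2}}(u+\varepsilon)^{p_{0}}\,dw\bigr)^{1/p_{0}}$ and the analogous lower bound for $\inf_{B_{r}}(u+\varepsilon)$. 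Separately, from the Caccioppoli estimate for $\log(u+\varepsilon)$ and the Poincar\'e inequality (3) one deduces that $\log(u+\varepsilon)\in BMO(B_{2r}(y))$ with norm controlled by $\lambda,\Lambda,[w]_{A_{2}}$. Since $(\mathbb{R}^{n},|\cdot|,w\,dx)$ is a space of homogeneous type, the John--Nirenberg inequality then furnishes a $p_{0}>0$ for which $\bigl(\tfrac{1}{w(B)}\int_{B}(u+\varepsilon)^{p_{0}}dw\bigr)\bigl(\tfrac{1}{w(B)}\int_{B}(u+\varepsilon)^{-p_{0}}dw\bigr)\le C$, and combining the three facts and letting $\varepsilon\to0$ yields $\sup_{B_{r}(y)}u\le M\inf_{B_{r}(y)}u$.

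I do not anticipate a genuine obstacle; this is essentially an assembly of the De Giorgi--Nash--Moser machinery in the weighted setting of \cite{fabes-kenig-serapioni82}, the only new input being the irrelevance of symmetry. The two points requiring care are (i) verifying that every constant produced depends on $\mathbf{A}$ only through $\lambda$ and $\Lambda$ — which is precisely Morrey's observation and is immediate once one checks that the cross terms in each test-function identity are always of the ``off-diagonal'' type — and (ii) keeping track of the exponents $p$ and $k>1$ from (2)--(3), which depend on $[w]_{A_{2}}$, throughout the iteration; both are routine bookkeeping.
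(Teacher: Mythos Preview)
Your proposal is correct and matches the paper's treatment: the paper does not actually prove Theorem~\ref{theorem-FKS} but merely states it, citing \cite{fabes-kenig-serapioni82} for the symmetric weighted case and invoking Morrey's observation (via \cite{KKPT2000}) that the De~Giorgi--Nash--Moser argument goes through unchanged for nonsymmetric $\mathbf{A}$, after listing precisely the three ingredients---Caccioppoli, Sobolev, Poincar\'e---that you assemble. Your sketch is exactly the filling-in of that outline, so there is nothing to compare.
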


Note that the case $w\equiv 1$ in the above theorem covers classical
non-symmetric elliptic operators. Now we are ready to apply the main
extension theorem to weighted elliptic operators.

\begin{proof}[Proof of\ Theorem \protect\ref{theorem-weighted}]
We have $u\in \mathcal{D}\left( \left( L_{w}\right) ^{\sigma }\right)
\subset L^{2}\left( \Omega ,w\right) $ is a solution of $\left( L_{w}\right)
^{\sigma }u=0$ in an open set $\Omega \subset \mathbb{R}^{n}$. For $\left(
x,y\right) \in \Omega \times \left( 0,\infty \right) $ we let%
\begin{equation*}
U\left( x,y\right) =\frac{1}{\Gamma \left( \sigma \right) }\int_{0}^{\infty
}e^{-tL_{w}}\left( \left( tL_{w}\right) ^{\sigma }u\left( x\right) \right)
e^{-\frac{y^{2}}{4t}}\frac{dt}{t}.
\end{equation*}%
By Theorem \ref{theorem-ABGR}, with $\mathcal{X}=L^{2}\left( \Omega
,w\right) $, $U\left( x,y\right) $ is well defined and $\left\Vert U\left(
\cdot ,y\right) \right\Vert _{L^{2}\left( w,\Omega \right) }\leq C\left\Vert
u\right\Vert _{L^{2}\left( w,\Omega \right) }$ for all $y>0$. Moreover, $U$
is a solution of the initial value problem%
\begin{eqnarray}
-L_{w}U\left( x,y\right) +\frac{1-2\sigma }{y}\frac{\partial }{\partial y}%
U\left( x,y\right) +\frac{\partial ^{2}}{\partial y^{2}}U\left( x,y\right) 
&=&0  \label{eq-ord} \\
U\left( x,0\right)  &=&u\left( x\right) .  \notag
\end{eqnarray}%
Note that the differential equation above can be written as%
\begin{equation}
\mathbf{\mathcal{L}}_{w,\sigma }U:=-\frac{1}{\mathbf{w}\left( x,y\right) }%
\func{div}\mathbf{w}\left( x,y\right) \mathcal{A}U\left( x,y\right) =0
\label{eq-div}
\end{equation}%
where, for all $x\in \Omega $ and $y\in \mathbb{R}$ 
\begin{equation*}
\mathcal{A}\left( x,y\right) =\left( 
\begin{array}{cc}
\mathbf{A}\left( x\right)  & 0 \\ 
0 & 1%
\end{array}%
\right) \qquad \text{and}\qquad \mathbf{w}\left( x,y\right) =w\left(
x\right) \left\vert y\right\vert ^{1-2\sigma }.
\end{equation*}%
This implies that $U\left( x,y\right) $ is a local solution of $\mathbf{%
\mathcal{L}}_{w,\sigma }U=0$ in $\Omega \times \left( 0,\infty \right) $,
i.e. for all ${\varphi \in }\mathcal{H}_{0}^{1}\left( \mathbf{w},\Omega
\times \left( 0,\infty \right) \right) $ we have that%
\begin{equation}
\int_{\Omega \times \left( 0,\infty \right) }\mathcal{A}\left( x,y\right)
\nabla U\left( x,y\right) \cdot \nabla {\varphi }\left( x,y\right) ~d\mathbf{%
w}=0.  \label{eq-U}
\end{equation}%
We now extend $U\left( x,y\right) $ for negative values of $y$ in an even
way an call this extension $\widetilde{U}$, i.e.%
\begin{equation*}
\widetilde{U}\left( x,y\right) =\left\{ 
\begin{array}{lr}
U\left( x,y\right)  & \qquad y\geq 0 \\ 
U\left( x,-y\right)  & \qquad y<0%
\end{array}%
\right. .
\end{equation*}%
We claim that $\widetilde{U}\left( x,y\right) $ is a local weak solution of $%
\mathbf{\mathcal{L}}_{w,\sigma }\widetilde{U}=0$ in $\Omega \times \mathbb{R}
$. Indeed, this follows the same way as in \cite{caffarelli-silvestre-07}.
Let ${\varphi }\left( x,y\right) \in \mathcal{C}_{0}^{\infty }\left( \Omega
\times \mathbb{R}\right) $ (note that $\mathcal{C}_{0}^{\infty }\left(
\Omega \times \mathbb{R}\right) $ is dense in $\mathcal{H}_{0}^{1}\left( 
\mathbf{w},\Omega \times \mathbb{R}\right) $) and let $\eta \left( y\right)
=\eta \left( \left\vert y\right\vert \right) $ be a smooth even cutoff
function such that $0\leq \eta \leq 1$, $\eta \equiv 1$ for $\left\vert
y\right\vert \leq 1$, $\eta \equiv 0$ for $\left\vert y\right\vert \geq 2$
and $\left\vert \eta ^{\prime }\right\vert \leq 2$. Set also $\eta
_{\varepsilon }\left( y\right) =\eta \left( \frac{y}{\varepsilon }\right) $
for all $\varepsilon >0$. Then by (\ref{eq-U}) and (\ref{eq-ord}) it follows
that%
\begin{eqnarray*}
&&\int_{\Omega \times \mathbb{R}}\mathcal{A}\left( x,y\right) \nabla 
\widetilde{U}\left( x,y\right) \cdot \nabla {\varphi }\left( x,y\right) ~d%
\mathbf{w} \\
&=&\int_{\Omega \times \mathbb{R}}\mathcal{A}\left( x,y\right) \nabla 
\widetilde{U}\left( x,y\right) \cdot \nabla \eta _{\varepsilon }\left(
y\right) {\varphi }\left( x,y\right) ~d\mathbf{w} \\
&=&\int_{\varepsilon \leq \left\vert y\right\vert \leq 2\varepsilon
}\left\vert y\right\vert ^{1-2\sigma }\eta _{\varepsilon }\left( y\right)
\int_{\Omega }\mathbf{A}\left( x\right) \nabla _{x}\widetilde{U}\left(
x,y\right) \cdot \nabla _{x}{\varphi }\left( x,y\right) ~dw~dy \\
&&+\int_{\varepsilon \leq \left\vert y\right\vert \leq 2\varepsilon
}\int_{\Omega }\left\vert y\right\vert ^{1-2\sigma }\widetilde{U_{y}}\left(
x,y\right) \cdot \frac{\partial }{\partial y}\left( \eta _{\varepsilon
}\left( y\right) {\varphi }\left( x,y\right) \right) ~dw~dy \\
&=&\int_{\varepsilon \leq \left\vert y\right\vert \leq 2\varepsilon
}\left\vert y\right\vert ^{1-2\sigma }\eta _{\varepsilon }\left( y\right)
\int_{\Omega }\left( \frac{1-2\sigma }{y}\widetilde{U_{y}}\left( x,y\right)
+U_{yy}\left( x,y\right) \right) {\varphi }\left( x,y\right) ~dw~dy \\
&&+\int_{\varepsilon \leq \left\vert y\right\vert \leq 2\varepsilon
}\int_{\Omega }\left\vert y\right\vert ^{1-2\sigma }\widetilde{U_{y}}\left(
x,y\right) \cdot \frac{\partial }{\partial y}\left( \eta _{\varepsilon
}\left( y\right) {\varphi }\left( x,y\right) \right) ~dw~dy.
\end{eqnarray*}%
We split the last two integrals into two, depending on the sign of the
integrand $y$. When $y>0$, by integration by parts have%
\begin{eqnarray*}
&&\int_{\varepsilon }^{2\varepsilon }y^{1-2\sigma }\eta _{\varepsilon
}\left( y\right) \int_{\Omega }\left( \frac{1-2\sigma }{y}\widetilde{U_{y}}%
\left( x,y\right) +\frac{\partial ^{2}}{\partial y^{2}}\widetilde{U}\left(
x,y\right) \right) {\varphi }\left( x,y\right) ~dw~dy \\
&&+\int_{\varepsilon }^{2\varepsilon }\int_{\Omega }y^{1-2\sigma }\widetilde{%
U_{y}}\left( x,y\right) \cdot \frac{\partial }{\partial y}\left( \eta
_{\varepsilon }\left( y\right) {\varphi }\left( x,y\right) \right) ~dw~dy \\
&=&\int_{\Omega }\int_{\varepsilon }^{2\varepsilon }\eta _{\varepsilon
}\left( y\right) \frac{\partial }{\partial y}\left( y^{1-2\sigma
}U_{y}\left( x,y\right) \right) {\varphi }\left( x,y\right) ~dy~dw \\
&&+\int_{\varepsilon }^{2\varepsilon }\int_{\Omega }y^{1-2\sigma
}U_{y}\left( x,y\right) \cdot \frac{\partial }{\partial y}\left( \eta
_{\varepsilon }\left( y\right) {\varphi }\left( x,y\right) \right) ~dw~dy \\
&=&\int_{\Omega }\left. \eta _{\varepsilon }\left( y\right) \left(
y^{1-2\sigma }U_{y}\left( x,y\right) \right) {\varphi }\left( x,y\right)
\right\vert _{y=\varepsilon }^{y=2\varepsilon }~dw \\
&=&\int_{\Omega }\varepsilon ^{1-2\sigma }U_{y}\left( x,\varepsilon \right) {%
\varphi }\left( x,\varepsilon \right) ~dw.
\end{eqnarray*}%
A similar treatment for the region where $y<0$ yields%
\begin{eqnarray*}
&&\int_{\Omega \times \mathbb{R}}\mathcal{A}\left( x,y\right) \nabla 
\widetilde{U}\left( x,y\right) \cdot \nabla {\varphi }\left( x,y\right) ~d%
\mathbf{w} \\
&=&\int_{\Omega }\varepsilon ^{1-2\sigma }U_{y}\left( x,\varepsilon \right)
\left( {\varphi }\left( x,\varepsilon \right) +{\varphi }\left(
x,-\varepsilon \right) \right) ~dw.
\end{eqnarray*}%
Since $\varepsilon >0$ is arbitrary and $\varepsilon ^{1-2\sigma }\frac{%
\partial U}{\partial y}\left( x,\varepsilon \right) \rightarrow \left(
L_{w}\right) ^{\sigma }u=0$ by (\ref{limits}), we have that%
\begin{equation*}
\int_{\Omega \times \mathbb{R}}\mathcal{A}\left( x,y\right) \nabla 
\widetilde{U}\left( x,y\right) \cdot \nabla {\varphi }\left( x,y\right) ~d%
\mathbf{w}=0
\end{equation*}%
which show that $\widetilde{U}\left( x,y\right) $ is a local weak solution
of $\mathbf{\mathcal{L}}_{w,\sigma }\widetilde{U}=0$ in $\Omega \times 
\mathbb{R}$. We note that $w\in A_{2}\left( \mathbb{R}^{n}\right)
\Longrightarrow \mathbf{w}=w\left( x\right) \left\vert y\right\vert
^{1-2\sigma }\in A_{2}\left( \mathbb{R}^{n+1}\right) $ for all $0<\sigma <1$%
. Indeed, for fixed $\left( x_{0},y_{0}\right) \in \mathbb{R}^{n+1}$ and $r>0
$%
\begin{eqnarray*}
&&\left( \frac{1}{\left\vert B_{r}\left( x_{0},y_{0}\right) \right\vert }%
\int\limits_{B_{r}\left( x_{0},y_{0}\right) }\mathbf{w}\left( x,y\right)
~dxdy\right) ^{\frac{1}{2}}\left( \frac{1}{\left\vert B_{r}\left(
x_{0},y_{0}\right) \right\vert }\int\limits_{B_{r}\left( x_{0},y_{0}\right) }%
\frac{1}{\mathbf{w}\left( x,y\right) }~dxdy\right) ^{\frac{1}{2}} \\
&\leq &C_{n}\left( \left( \frac{1}{\left\vert B_{r}\left( x_{0}\right)
\right\vert }\int_{\left\vert x-x_{0}\right\vert <r}w\left( x\right)
~dx\right) \left( \frac{1}{\left\vert B_{r}\left( x_{0}\right) \right\vert }%
\int_{\left\vert x-x_{0}\right\vert <r}\frac{1}{w\left( x\right) }~dx\right)
\right) ^{\frac{1}{2}} \\
&&\times \left( \left( \frac{1}{2r}\int_{\left\vert y-y_{0}\right\vert
<r}\left\vert y\right\vert ^{1-2\sigma }dy\right) \left( \frac{1}{2r}%
\int_{\left\vert y-y_{0}\right\vert <r}\left\vert y\right\vert ^{2\sigma
-1}dy\right) \right) ^{\frac{1}{2}} \\
&\leq &C_{n}\left[ w\right] _{A_{2}\left( \mathbb{R}^{n}\right) }\left[
\left\vert y\right\vert ^{2\sigma -1}\right] _{A_{2}\left( \mathbb{R}\right)
}<\infty .
\end{eqnarray*}%
By Theorem \ref{theorem-FKS} it follows that for all $x_{0}\in \Omega $ and $%
r>0$ such that $B_{2r}\left( x_{0}\right) \subset \Omega $%
\begin{eqnarray}
\max_{B_{r}\left( x_{0}\right) }\left\vert u\left( x\right) \right\vert 
&\leq &\max_{B_{r}\left( x_{0},0\right) }\left\vert \widetilde{U}\left(
x,y\right) \right\vert   \label{eq-this} \\
&\leq &M\frac{1}{\mathbf{w}\left( B_{2r}\left( x_{0},0\right) \right) }%
\int_{B_{2r}\left( x_{0},0\right) }\left\vert \widetilde{U}\left( x,y\right)
\right\vert ^{2}d\mathbf{w}\left( x,y\right) .  \notag
\end{eqnarray}%
Since any $A_{2}$ weight is doubling, there exists a constant $D_{w}>1$ such
that%
\begin{eqnarray*}
\mathbf{w}\left( B_{2r}\left( x_{0},0\right) \right)  &\geq &D_{w}^{-1}%
\mathbf{w}\left( B_{2r}\left( x_{0}\right) \times \left[ -2r,2r\right]
\right)  \\
&=&D_{w}^{-1}\int_{-2r}^{2r}\left\vert y\right\vert ^{1-2\sigma
}\int_{B_{2r}\left( x_{0}\right) }w\left( x\right) dx~dy \\
&=&D_{w}^{-1}\frac{\left( 2r\right) ^{2-2\sigma }}{1-\sigma }w\left(
B_{2r}\left( x_{0}\right) \right) .
\end{eqnarray*}%
We also have that%
\begin{eqnarray}
\int_{B_{2r}\left( x_{0},0\right) }\left\vert \widetilde{U}\left( x,y\right)
\right\vert ^{2}d\mathbf{w}\left( x,y\right)  &\leq
&\int_{-2r}^{2r}\left\vert y\right\vert ^{1-2\sigma }\int_{B_{2r}\left(
x_{0}\right) }\left\vert \widetilde{U}\left( x,y\right) \right\vert
^{2}w\left( x\right) dx~dy  \notag \\
&=&2\int_{0}^{2r}y^{1-2\sigma }\int_{B_{2r}\left( x_{0}\right) }\left\vert
U\left( x,y\right) \right\vert ^{2}w\left( x\right) dx~dy  \notag \\
&\leq &2\int_{0}^{2r}y^{1-2\sigma }\left\Vert U\left( \cdot ,y\right)
\right\Vert _{L^{2}\left( w,\Omega \right) }^{2}~dy  \notag \\
&\leq &C\frac{\left( 2r\right) ^{2-2\sigma }}{1-\sigma }\left\Vert
u\right\Vert _{L^{2}\left( w,\Omega \right) }^{2}.  \label{eq-that}
\end{eqnarray}%
Putting these inequalities together with (\ref{eq-this}) yields%
\begin{equation*}
\max_{B_{r}\left( x_{0}\right) }\left\vert u\left( x\right) \right\vert \leq 
\frac{CMD_{w}}{w\left( B_{2r}\left( x_{0}\right) \right) }\left\Vert
u\right\Vert _{L^{2}\left( w,\Omega \right) }^{2}.
\end{equation*}%
Which shows that $u$ is locally bounded in $\Omega $.

On the other hand we note that since the semigroup $e^{-tL_{w}}$ is positive
(cf. Section 4.5 in \cite{ouhabaz05}), hence if $u$ is moreover nonnegative
it follows that $\widetilde{U}$ is nonnegative. Then, by Theorem \ref%
{theorem-FKS}%
\begin{equation*}
\max_{B_{r}\left( x_{0}\right) }u\left( x\right) \leq \max_{B_{r}\left(
x_{0},0\right) }\widetilde{U}\left( x,y\right) \leq M\min_{B_{r}\left(
x_{0},0\right) }\widetilde{U}\left( x,y\right) \leq M\min_{B_{r}\left(
x_{0}\right) }u\left( x\right) .
\end{equation*}%
This proves the Harnack's estimate for nonnegative solutions of $\left(
L_{w}\right) ^{\sigma }u=0$. H\"{o}lder's continuity of solutions follows
directly from these scale invariant estimates.
\end{proof}

The proof of the boundary Harnack principle for fractional powers of $L_{w}$
(Theorem \ref{theorem-BH}) is just the same as the proof of Theorem 5.3 in 
\cite{caffarelli-silvestre-07}, so we only provide a sketch.

\begin{proof}[Proof of Theorem \protect\ref{theorem-BH}]
Let $\widetilde{U}\left( x,y\right) $ be as in the previous proof, then $%
\tilde{U}$ is a solution of (\ref{eq-div}) in $\Omega \times \mathbb{R}$.
Applying Theorem 2.4.6 from \cite{fabes-kenig-serapioni82} (boundary H\"{o}%
lder continuity) in the set $G_{1}$, where%
\begin{equation*}
G_{\rho }=B_{\rho }\left( x_{0}\right) \times \left( -\rho ,\rho \right)
\bigcap \Omega \times \mathbb{R},
\end{equation*}%
yields%
\begin{equation}
\sup_{G_{\rho }}\widetilde{U}\left( x,y\right) -\inf_{G_{\rho }}\widetilde{U}%
\left( x,y\right) \leq M\left( \int_{G_{\frac{1}{2}}}\left\vert \widetilde{U}%
\left( x,y\right) \right\vert ^{2}d\mathbf{w}\left( x,y\right) \right) ^{%
\frac{1}{2}}\rho ^{\alpha }.  \label{eq-HU}
\end{equation}%
Since   
\begin{equation*}
\sup_{B_{\rho }\left( x_{0}\right) \bigcap \Omega }u\left( x\right)
-\inf_{B_{\rho }\left( x_{0}\right) \bigcap \Omega }u\left( x\right) \leq
\sup_{G_{\rho }}\widetilde{U}\left( x,y\right) -\inf_{G_{\rho }}\widetilde{U}%
\left( x,y\right) 
\end{equation*}%
the Theorem follows by bounding the right hand side in (\ref{eq-HU}) in a
similar way as we obtained (\ref{eq-that}).%
\begin{eqnarray}
\int_{G_{\frac{1}{2}}}\left\vert \widetilde{U}\left( x,y\right) \right\vert
^{2}d\mathbf{w}\left( x,y\right)  &\leq &\int_{-\frac{1}{2}}^{\frac{1}{2}%
}\left\vert y\right\vert ^{1-2\sigma }\int_{B_{\frac{1}{2}}\left(
x_{0}\right) \bigcap \Omega }\left\vert \widetilde{U}\left( x,y\right)
\right\vert ^{2}w\left( x\right) dx~dy  \notag \\
&=&2\int_{0}^{\frac{1}{2}}y^{1-2\sigma }\int_{B_{\frac{1}{2}}\left(
x_{0}\right) \bigcap \Omega }\left\vert U\left( x,y\right) \right\vert
^{2}w\left( x\right) dx~dy  \notag \\
&\leq &2\int_{0}^{2r}y^{1-2\sigma }\left\Vert U\left( \cdot ,y\right)
\right\Vert _{L^{2}\left( w,\Omega \right) }^{2}~dy  \notag \\
&\leq &C\frac{\left( 2r\right) ^{2-2\sigma }}{1-\sigma }\left\Vert
u\right\Vert _{L^{2}\left( w,\Omega \right) }^{2}.
\end{eqnarray}
\end{proof}

\bigskip 

\bigskip 

Let $0<\sigma \leq 1$, for every $\mathbf{A}\in \mathcal{F}_{n}\left(
\lambda ,\Lambda \right) $ and $w\in A_{2}$. Suppose $u\in \mathcal{D}\left(
L^{\sigma }\right) $ is a function on $\mathbb{R}^{n}$ such that $\left(
L_{w}\right) ^{\sigma }u=0$ in a domain $\Omega $, and suppose that for some 
$x_{0}\in \Omega $, $u=0$ on $B_{1}\left( x_{0}\right) \backslash \Omega $
where $\partial \Omega \bigcap B_{1}\left( x_{0}\right) $ is given by a
Lipschitz graph with constant less than $1$. Then there exist constants $M>0$
and $0<\alpha <1$ depending on $\lambda ,\Lambda ,\left[ w\right] _{A_{2}}$,
and $\sigma $ such that for all $0<\rho <\frac{1}{2}$%
\begin{equation*}
\sup_{\Omega \bigcap B_{\rho }\left( x_{0}\right) }u-\inf_{\Omega \bigcap
B_{\rho }\left( x_{0}\right) }u\leq M\left( \frac{1}{w\left( B_{\frac{1}{2}%
}\left( x_{0}\right) \right) }\int_{B_{\frac{1}{2}}\left( x_{0}\right)
}u^{2}dw\right) ^{\frac{1}{2}}\rho ^{\alpha }.
\end{equation*}

\subsection{Non isotropic operators\label{section-noni}}

In \cite{sawyer-wheeden-06} Sawyer and Wheeden consider the general linear
second order equations%
\begin{equation}
\mathcal{L}u=-\func{div}\mathbf{B}\left( x\right) \nabla
u+\sum_{i=1}^{N}b_{i}R_{i}u+\sum_{i=1}^{N}S_{i}^{\prime
}c_{i}u+du=f+\sum_{i=1}^{N}T_{i}^{\prime }g_{i},  \label{eq-subelliptic}
\end{equation}%
for which the principal part is nonnegative but not necessarily strongly
elliptic. More precisely, these authors assumed the following conditions:%
\renewcommand{\theenumi}{\Alph{enumi}}%

\begin{enumerate}
\item \label{noni-A}$\mathbf{B}$ is a bounded measurable nonnegative
semidefinite matrix,

\item \label{noni-B}$\left\{ R_{i}\right\} _{i=1}^{N}$, $\left\{
S_{i}\right\} _{i=1}^{N}$, and $\left\{ T_{i}\right\} _{i=1}^{N}$, are
collections of vector fields subunit with respect to $\mathbf{B}\left(
x\right) $, i.e. 
\begin{equation*}
\left( X\left( x\right) \cdot \xi \right) ^{2}\leq \xi ^{\prime }\mathbf{B}%
\left( x\right) \xi \qquad \text{for all }\xi \in \mathbb{R}^{n},
\end{equation*}%
$X=R_{i},S_{i},T_{i}$, $i=1,\cdots ,N$, and all $x$ in a domain (open and
connected set) $\Omega \subset \mathbb{R}^{n}$;

\item \label{noni-C}the operator coefficients $\mathbf{b}=\left\{
b_{i}\right\} _{i=1}^{N}$, $\mathbf{c}=\left\{ c_{i}\right\} _{i=1}^{N}$ , $%
d $, and the inhomogeneous data $\mathbf{g}=\left\{ g_{i}\right\} _{i=1}^{N}$
and $f$ are measurable.

\item \label{noni-D}The coefficients and data moreover satisfy%
\begin{eqnarray*}
\left\Vert d\right\Vert _{L^{\frac{q}{2}}\left( \Omega \right) }+\left\Vert 
\mathbf{b}\right\Vert _{L^{q}\left( \Omega \right) }+\left\Vert \mathbf{c}%
\right\Vert _{L^{q}\left( \Omega \right) } &:&\equiv N_{q}<\infty \\
\left\Vert f\right\Vert _{L^{\frac{q}{2}}\left( \Omega \right) }+\left\Vert 
\mathbf{g}\right\Vert _{L^{q}\left( \Omega \right) } &:&\equiv N_{q}^{\prime
}<\infty
\end{eqnarray*}%
for some $q\geq 2$.
\end{enumerate}

When ellipticity is allowed to degenerate, the concept of weak solution must
be adapted to the geometry induced by the principal part of the operator,
namely, the geometry of the subunit metric with respect to the matrix $%
\mathbf{B}$, as described in the introduction. The natural space for
solutions is also determined by the quadratic form given by $\mathbf{B}$, we
let $W_{\mathbf{B}}^{1,2}\left( \Omega \right) $ be the space of square
integrable measurable functions $f$ such that their gradient belongs to the
space $\mathbf{L}_{\mathbf{B}}^{2}$ given by measurable vector functions $%
\mathbf{v}\left( x\right) $ such that%
\begin{equation*}
\left\Vert \mathbf{v}\right\Vert _{\mathbf{L}_{\mathbf{B}}^{2}}^{2}=\int_{%
\Omega }\mathbf{v}\cdot \mathbf{Bv}~dx<\infty .
\end{equation*}%
We say that $u$ is a solution of (\ref{eq-subelliptic}) in $\Omega $ if $%
u\in W_{\mathbf{B}}^{1,2}\left( \Omega \right) $ and 
\begin{eqnarray*}
&&\int \nabla u~\mathbf{B}\nabla w+\int \sum_{i=1}^{N}b_{i}\left(
R_{i}u\right) w+\int \sum_{i=1}^{N}c_{i}u\left( S_{i}w\right) +\int duw \\
&=&\int fw+\int \sum_{i=1}^{N}g_{i}\left( T_{i}w\right) ,
\end{eqnarray*}%
for all nonnegative $w\in \left( W_{\mathbf{B}}^{1,2}\right) _{0}\left(
\Omega \right) $. We will further assume that the matrix $\mathbf{B}$ is
equivalent to a special kind of diagonal matrices (see condition (\ref%
{condition-D}) below) what will ensure that $W_{\mathbf{B}}^{1,2}\left(
\Omega \right) $ is a Hilbert space with inner product given by%
\begin{equation}
\left\langle f,g\right\rangle _{W_{\mathbf{B}}^{1,2}\left( \Omega \right)
}=\int_{\Omega }fg~dx+\int_{\Omega }\nabla f\cdot \mathbf{B}\nabla g~dx.
\label{inner}
\end{equation}%
See \cite{sawyer-wheeden-10, MRW15} for details.

Note that the operator $L$ is given by the bilinear form 
\begin{equation}
\mathcal{E}\left( u,w\right) =\int \nabla u~\mathbf{B}\nabla w+\int
\sum_{i=1}^{N}b_{i}\left( R_{i}u\right) w+\int \sum_{i=1}^{N}c_{i}u\left(
S_{i}w\right) +\int duw  \label{Eni}
\end{equation}%
on $\mathcal{F=}\left( W_{\mathbf{B}}^{1,2}\right) \left( \Omega \right)
\subset L^{2}\left( \Omega \right) $. We will assume this bilinear form is
nonnegative; for this would suffice that $\mathbf{b}$, $\mathbf{c}$, and $d$
vanish, or that $d\geq d_{0}>0$ for some constant $d_{0}$, and $\left\Vert 
\mathbf{b}\right\Vert _{\infty },\left\Vert \mathbf{c}\right\Vert _{\infty }$
small enough in view of the subunit condition (2) above. Indeed, for the
second term in the (\ref{Eni}) we have 
\begin{eqnarray*}
\left\vert \int \sum_{i=1}^{N}b_{i}\left( R_{i}u\right) u\right\vert &\leq
&\sum_{i=1}^{N}\left( \int \left\vert R_{i}u\right\vert ^{2}\right) ^{\frac{1%
}{2}}\left( \int b_{i}^{2}u^{2}\right) ^{\frac{1}{2}} \\
&\leq &N\left\Vert \mathbf{b}\right\Vert _{\infty }\left( \int \nabla u~%
\mathbf{B}\nabla u\right) ^{\frac{1}{2}}\left( \int u^{2}\right) ^{\frac{1}{2%
}},
\end{eqnarray*}%
with a similar estimate for the third term in (\ref{Eni}); thus, 
\begin{eqnarray}
\mathcal{E}\left( u,u\right) &\geq &\int \nabla u~\mathbf{B}\nabla w
\label{Euu} \\
&&+N\left( \left\Vert \mathbf{b}\right\Vert _{\infty }+\left\Vert \mathbf{c}%
\right\Vert _{\infty }\right) \left( \int \nabla u~\mathbf{B}\nabla u\right)
^{\frac{1}{2}}\left( \int u^{2}\right) ^{\frac{1}{2}}+\int du^{2}.  \notag
\end{eqnarray}%
It easily follows that the right hand side is nonnegative under the above
assumptions. Hence, $\mathcal{E}$ satisfies the lower bound (\ref{E1-lb}) in
Definition \ref{def-Dirichlet}, while the sector condition (\ref{E2-sector})
can similarly be verified. We will assume further structural assumptions to
guarantee a-priori Harnack estimated for solutions. First we must include
some more definitions and background.

\begin{definition}[Reverse H\"{o}lder infinity]
A nonnegative function $a\left( t\right) $ defined on an open subset $J$ of $%
\mathbb{R}$ satisfies the reverse H\"{o}lder condition of infinite order if%
\begin{equation*}
\limfunc{ess}\sup_{I}a\left( t\right) \leq C\frac{1}{\left\vert I\right\vert 
}\int_{I}a\left( t\right) ~dt
\end{equation*}%
for all intervals $I\subset J$. In such case, we say that $a\in RH^{\infty
}\left( J\right) $.
\end{definition}

\begin{remark}
All positive powers are in $RH^{\infty }$.
\end{remark}

\begin{definition}[Flag condition - \protect\cite{sawyer-wheeden-06}
Definition 12]
A collection of continuous vector fields satisfies the flag condition at $%
x\in \Omega $ if for each index set $\emptyset \subset \mathcal{I}%
\varsubsetneq \left\{ 1,2,\dots ,n\right\} $, there is $j\notin \mathcal{I}$
such that for any neighbourhood $\mathcal{N}$ of $x$ in $\Omega $, $a_{j}$
does not vanish identically on $\left( x+\mathcal{V}_{\mathcal{I}}\right)
\bigcap \mathcal{N}$ where $\mathcal{V}_{\emptyset }=\left\{ 0\right\} $ and 
$\mathcal{V}_{\mathcal{I}}=\limfunc{span}\left\{ \mathbf{e}_{i}:i\in 
\mathcal{I}\right\} $, $\mathbf{e}_{i}=\left( 0,\dots ,0,1,0,\dots ,0\right) 
$ (with $1$ in the $i^{th}$ position). The vector fields $X_{i}$ satisfy the
flag condition in $\Omega $ if they satisfy the flag condition at every
point $x\in \Omega $.
\end{definition}

The flag condition ensures that the flow of the vector fields $X_{j}$ does
not get "trapped" into any variety of dimension less than $n$. This
condition is necessary for subellipticity of operators given by diagonal,
Lipschitz, $RH^{\infty }$, vector fields (see \cite{sawyer-wheeden-06},
Theorem 16).

We assume that the principal part $-\func{div}\mathbf{B}\nabla $ of our
operator $\mathcal{L}$ satisfies the following structural condition:

\begin{condition}
\label{condition-D}There exist nonnegative Lipschitz functions $a_{j}$ on $%
\Omega \subset \mathbb{R}^{n}$, $j=1,\cdots ,n$, such that 
\renewcommand{\theenumi}{\Roman{enumi}}%

\begin{enumerate}
\item \label{condition-D-A}for every compact set $K\subset \Omega $, $%
a_{j}\in RH^{\infty }\left( K\right) $ in each variable $x_{i}$ with $i\neq
j $, uniformly in the remaining variables;

\item \label{condition-D-B}the set $\mathbb{X}=\left\{ X_{j}\right\}
_{j=1}^{n}$ of vector fields $X_{j}=a_{j}\frac{\partial }{\partial x_{j}}$
satisfies the flag condition in $\Omega $;

\item \label{condition-D-C}there exist constants $0<c_{\mathbf{B}}\leq C_{%
\mathbf{B}}<\infty $ such that the matrix $\mathbf{B}\left( x\right) $
satisfies the upper and lower bounds in $\Omega $ 
\begin{equation}
c_{\mathbf{B}}\sum_{j=1}^{n}a_{j}\left( x\right) \xi _{j}^{2}\leq \xi
^{\prime }\mathbf{B}\left( x\right) \xi \leq C_{\mathbf{B}%
}\sum_{j=1}^{n}a_{j}\left( x\right) \xi _{j}^{2}.  \label{quadratic-equiv-2}
\end{equation}
\end{enumerate}
\end{condition}

In particular, this structure allows for different order of vanishing of the
eigenvalues $a_{j}$ of $\mathbf{B}$, what was not permitted to the weighted
elliptic operators in \cite{fabes-kenig-serapioni82} treated in Section \ref%
{section-weighted-elliptic}.

The following is an example of a diagonal system of vector fields which
satisfies the flag condition and condition \ref{condition-D}.

\begin{example}
\label{example-diag}Suppose $a_{j}\left( x\right) $, $j=1,\cdots ,n$, are
nonnegative Lispchitz functions, that for each $x_{0}\in \Omega $ there
exists a permutation $\tau =\tau _{x_{0}}$ of the set $\left\{ 1,\cdots
,n\right\} $ and a neighbourhood $\mathcal{N}_{x_{0}}\subset \Omega $ of $%
x_{0}$ such that in $\mathcal{N}_{x_{0}}$ we have for $\left( y_{1},\dots
,y_{n}\right) =\left( x_{\tau \left( 1\right) },\dots ,x_{\tau \left(
n\right) }\right) $ and $\tilde{a}_{j}\left( y\right) =a_{\tau \left(
j\right) }\left( \tau ^{-1}\left( y\right) \right) $:

\begin{itemize}
\item $\tilde{a}_{1}\left( y\right) \approx 1$, and $\tilde{a}_{j}\left(
y\right) =\tilde{a}_{j}\left( y_{1},\dots ,y_{j-1}\right) $ for $j=2,\cdots
,n$;

\item $\tilde{a}_{j}$ has isolated zeroes in their variables; i.e. 
\begin{equation*}
Z_{j}=\left\{ \left( y_{1},\dots ,y_{j-1}\right) :\tilde{a}_{j}\left(
y_{1},\dots ,y_{j-1}\right) =0\right\} \bigcap \mathcal{N}
\end{equation*}%
is a discrete set in $\mathbb{R}^{j-1}$ for $j=2,\cdots ,n$;

\item $\tilde{a}_{j}$ is locally homogeneous of finite type: if $z\in $ $%
Z_{j}$ then 
\begin{equation*}
\tilde{a}_{j}\left( w-z\right) \approx \left\vert w-z\right\vert
^{k_{j}}\qquad \text{ }w\text{ near }z
\end{equation*}%
for some integers $k_{j}\geq 1$, $2\leq j\leq n$. \newline
Then $X_{j}=a_{j}\frac{\partial }{\partial x_{j}}$, $1\leq j\leq n$, is a
collection of vector fields which satisfies the flag condition and (\ref%
{condition-D-A}) from condition \ref{condition-D}.
\end{itemize}
\end{example}

\begin{proof}
As noted in \cite{sawyer-wheeden-06} (Remark 13), to check that $\left\{
X_{j}\right\} _{j=1}^{n}$ satisfies the flag condition at a point $x_{0}$ it
suffices show that there exist an increasing sequence of index sets 
\begin{equation*}
\emptyset \neq \mathcal{I}_{1}\subsetneqq \mathcal{I}_{2}\subsetneqq \dots
\subsetneqq \mathcal{I}_{n}=\left\{ 1,\cdots ,n\right\} ,
\end{equation*}%
such that for $\mathcal{V}_{0}=\left\{ 0\right\} $ and $\mathcal{V}_{j}=%
\limfunc{span}\left\{ e_{i}:i\in \mathcal{I}_{j}\right\} $, $a_{i}$ does not
vanish identically on $\left( x_{0}+\mathcal{V}_{j}\right) \bigcap \mathcal{N%
}$ for any neighbourhood $\mathcal{N}$ of $x_{0}$ and any $i\in \mathcal{I}%
_{j+1}$, $j=0,1,\cdots ,n-1$. It suffices to check this when the permutation 
$\tau $ is the identity and the point $x_{0}$ is the origin. From the
definition of $a_{j}$, taking 
\begin{equation*}
\mathcal{I}_{j}=\left\{ 1,\dots ,j\right\} ,\qquad j=1,\cdots ,n,
\end{equation*}%
we have that if $i+1\in \mathcal{I}_{j}$, for some $j=2,\cdots ,n$, then $%
1\leq i\leq j-1$. Let $0_{i}$ be the origin in $\mathbb{R}^{i}$. If $%
a_{i}\left( 0_{i}\right) >0$ then there is nothing to prove since $a_{i}$ is
continuous. If $a_{i}\left( 0_{i}\right) =0$ then $a_{i}\left( w\right)
\approx \left\vert w\right\vert ^{k_{i}}$ for $w\in \mathbb{R}^{i}$ near $%
0_{i}$ and since $\mathcal{V}_{j}\bigcap \mathcal{N}\supset \mathbb{R}%
^{i}\bigcap \mathcal{N}$ we see that $a_{i}$ does not vanish identically on $%
\left( x_{0}+\mathcal{V}_{j}\right) \bigcap \mathcal{N}$ for any
neighbourhood $\mathcal{N}$.

Finally, if $a_{i}\left( 0_{i}\right) >0$ then $a_{i}$ is locally constant
and therefore $a_{i}$ satisfies (\ref{condition-D-A}) from condition \ref%
{condition-D}. On the other hand, if $a_{i}\left( 0_{i}\right) =0$ and $%
i<\ell \leq n$ then $a_{i}$ is constant in the variable $w_{\ell }$ so $%
a_{i} $ is in $RH^{\infty }$ of this variable independently of the remaining
variables, while if $a_{i}\left( 0_{i}\right) =0$ and $1\leq \ell \leq i$
then 
\begin{equation*}
a_{i}\left( w\right) \approx \left( w_{1}^{2}+\dots +w_{\ell }^{2}+\dots
+w_{i}^{2}\right) ^{\frac{k_{i}}{2}}
\end{equation*}%
so $a_{i}$ is in $RH^{\infty }$ of the variable $w_{\ell }$, uniformly on
the remaining variables.
\end{proof}

The following Harnack inequality can be found in \cite{sawyer-wheeden-06}
(Propositions 58 and 67 and Theorems 61 and 82), see also \cite{MRW15}.

\begin{theorem}[Harnack's inequality for subelliptic almost-diagonal
operators]
\label{theorem-SW}Let $\mathcal{L}$ be given by (\ref{eq-subelliptic}) where
the coefficients satisfy (\ref{noni-A}), (\ref{noni-B}), (\ref{noni-C}), and
(\ref{noni-D}), and $\mathbf{B}\left( x\right) $ satisfies condition \ref%
{condition-D} in a domain $\Omega \subset \mathbb{R}^{n}$. Then there exist
a constant $C_{H}>0$ such that every weak solution of $\mathcal{L}u=0$ in $%
B_{2r}\left( y\right) \subset \Omega $, satisfies%
\begin{equation*}
\limfunc{ess}\sup_{B_{r}}u\leq C_{H}\left( \frac{1}{\left\vert B_{2r}\left(
y\right) \right\vert }\int_{B_{2r}\left( y\right) }\left\vert u\right\vert
^{2}dx\right) ^{\frac{1}{2}}.
\end{equation*}%
where the balls $B_{r}$ are the subunit metric balls of the metric induced
by $\mathbb{X}$. Moreover, if $u$ is nonnegative, then we also have that%
\begin{equation*}
\limfunc{ess}\sup_{B_{r}}u\leq C_{H}~\limfunc{ess}\inf_{B_{r}}u.
\end{equation*}
\end{theorem}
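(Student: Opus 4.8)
The plan is to prove Theorem~\ref{theorem-SW} by running a Moser iteration adapted to the subunit geometry of the system $\mathbb{X}=\{a_j\partial_{x_j}\}$, which is exactly the route of Sawyer and Wheeden \cite{sawyer-wheeden-06}; accordingly, the work splits into (i) checking that Condition~\ref{condition-D} together with (\ref{noni-A})--(\ref{noni-D}) puts us inside their framework, and (ii) assembling the a~priori estimates into boundedness and Harnack. For (i), the flag condition (\ref{condition-D-B}) together with the $RH^\infty$ hypothesis (\ref{condition-D-A}) is precisely what guarantees that the subunit metric $\delta$ of $\mathbb{X}$ is a genuine (finite) metric in the sense of Fefferman--Phong \cite{fefferman-phong-81} and that the subunit balls $B_r$ form a space of homogeneous type (doubling for Lebesgue measure); Example~\ref{example-diag} verifies this for the model operators, and the two-sided bound (\ref{quadratic-equiv-2}) transfers the geometry from the diagonal form $\sum_j a_j\xi_j^2$ to $\mathbf{B}$. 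The lower-order coefficients are subunit with respect to $\mathbf{B}$ by (\ref{noni-B}) and satisfy the integrability in (\ref{noni-D}), so they are subordinate to the principal part and only perturb the energy estimates by controlled amounts.

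For the core argument I would rest the iteration on three pillars, all on subunit balls $B_r\subset\Omega$. First, a Caccioppoli (energy) inequality for weak sub/supersolutions, obtained by testing the weak formulation of $\mathcal{L}u=0$ against $u\,\varphi^2$, and against $(u+\varepsilon)^{\beta}\varphi^2$ for appropriate exponents $\beta$, using the subunit bounds on $R_i,S_i,T_i$ to absorb the first-order terms into $\int\nabla u\,\mathbf{B}\nabla u$ and the $L^{q/2},L^q$ bounds on $d,\mathbf{b},\mathbf{c}$ to dominate the remainder by a small multiple of the energy plus lower-order terms. Second, a Sobolev inequality of the form $\big(\tfrac{1}{|B_r|}\int_{B_r}|v|^{2k}\big)^{1/2k}\le C\,r\big(\tfrac{1}{|B_r|}\int_{B_r}\nabla v\,\mathbf{B}\nabla v\big)^{1/2}$ with gain $k>1$ for $v\in (W^{1,2}_{\mathbf{B}})_0(B_r)$. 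Third, an $L^1$--$L^1$ Poincar\'e inequality for $v-v_{B_r}$ in the same geometry. With doubling plus these three estimates, the iteration proceeds as in the unweighted De~Giorgi--Nash--Moser scheme: combining Caccioppoli with Sobolev on a nested sequence of subunit balls yields reverse-H\"older self-improvements for powers of $u$ whose limit gives the local bound $\limfunc{ess}\sup_{B_r}u\le C_H\big(\tfrac{1}{|B_{2r}(y)|}\int_{B_{2r}(y)}|u|^2\big)^{1/2}$; for nonnegative $u$ the same scheme applied to negative powers of $u+\varepsilon$ gives $\big(\tfrac{1}{|B_r|}\int_{B_r}(u+\varepsilon)^{-p}\big)^{-1/p}\ge c\,\limfunc{ess}\inf_{B_r}u$ for some small $p>0$, and the gap between small positive and small negative exponents is bridged by a John--Nirenberg / BMO bound on $\log(u+\varepsilon)$ coming from the Poincar\'e inequality (equivalently, the abstract lemma of Bombieri--Giusti). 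Letting $\varepsilon\to 0$ yields the Harnack inequality, with all constants depending only on $n$, $c_{\mathbf{B}},C_{\mathbf{B}}$, the Lipschitz and $RH^\infty$ constants of the $a_j$, the flag data, and $N_q$.

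The step I expect to be the genuine obstacle is the Sobolev inequality (and its companion Poincar\'e) in this non-isotropic degenerate setting: different eigenvalues $a_j$ may vanish to different finite orders on sets of codimension as low as one, so the subunit balls are strongly anisotropic and no classical or $A_2$-weighted embedding applies directly. The proof requires a subrepresentation formula, a pointwise estimate of $v-v_{B}$ by a fractional-integral-type operator built from subunit curves joining points of $B$, and it is exactly the flag condition together with $RH^\infty$-regularity of the $a_j$ that supplies enough such curves (the subunit flow is not ``trapped'' in a lower-dimensional variety) for this operator to map $L^p$ to $L^{pk}$ with a genuine gain. This is the technical heart of \cite{sawyer-wheeden-06} (see also \cite{MRW15}); in the present write-up I would invoke it as established there rather than reproduce it, so that the contribution here is the verification that our operators fall under its hypotheses.
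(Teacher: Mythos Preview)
Your proposal is correct and aligns with the paper's treatment: the paper does not prove Theorem~\ref{theorem-SW} at all but simply cites it as an established result from \cite{sawyer-wheeden-06} (Propositions~58 and~67, Theorems~61 and~82) and \cite{MRW15}. Your sketch of the Moser iteration with Caccioppoli, Sobolev, and Poincar\'e ingredients in the subunit geometry is an accurate summary of the Sawyer--Wheeden argument, and your identification of the Sobolev/subrepresentation step as the technical heart is on target; but since the paper treats this theorem as a black box from the literature, your outline goes beyond what the paper itself provides.
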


Under Condition \ref{condition-D}, the space $W_{\mathbf{B}}^{1,2}\left(
\Omega \right) $ is a Hilbert space with inner product given by the left
hand side of (\ref{inner}) (see \cite{sawyer-wheeden-10}, Theorem 2 and
Section 3); from this it follows that the completeness condition (\ref%
{E3-complete}) holds for the form $\mathcal{E}$ in (\ref{Euu}). Thus, $%
\mathcal{E}$ satisfies the hypotheses of Corollary \ref{coro-sectorial}, and
therefore $L$ is a sectorial operator on $W_{\mathbf{B}}^{1,2}\left( \Omega
\right) $. This allows us to apply the functional calculus of Section \ref%
{section-calculus} to $L$.

With these preliminaries laid down, we can not prove Theorem \ref%
{theorem-noniso}; we will obtain this theorem as a consequence of the
following more general result:

\begin{theorem}
\label{theorem-noniso-2}Let $\mathcal{L}$ be given by (\ref{eq-subelliptic})
where the coefficients satisfy (\ref{noni-A}), (\ref{noni-B}), (\ref{noni-C}%
), and (\ref{noni-D}), the bilinear form $\mathcal{E}$ given by (\ref{Eni})
is nonnegative, and $\mathbf{B}\left( x\right) $ satisfies condition \ref%
{condition-D} in a domain $\Omega \subset \mathbb{R}^{n}$. Suppose $u\in 
\mathcal{D}\left( L^{\frac{1}{2}}\right) $ and that $L^{\frac{1}{2}}u=0$ in
an open set $\Omega ^{\prime }\Subset \Omega $. Then there exist a constant $%
C_{H}>0$ such that if $B_{2r}\left( x_{0}\right) \subset \Omega ^{\prime }$,
then%
\begin{equation*}
\limfunc{ess}\sup_{B_{r}}u\leq C_{H}\left( \frac{1}{\left\vert B_{2r}\left(
x_{0}\right) \right\vert }\int_{B_{2r}\left( x_{0}\right) }\left\vert
u\right\vert ^{2}dx\right) ^{\frac{1}{2}}.
\end{equation*}%
where the balls $B_{r}$ are the subunit metric balls of the metric induced
by the system of vector fields $\left\{ a_{j}\left( x\right) \frac{\partial 
}{\partial x_{j}}\right\} _{j=1}^{n}$. Moreover, if $u$ is nonnegative, then
we also have that%
\begin{equation*}
\limfunc{ess}\sup_{B_{r}}u\leq C_{H}~\limfunc{ess}\inf_{B_{r}}u.
\end{equation*}
\end{theorem}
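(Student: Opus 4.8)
The plan is to run the Caffarelli--Silvestre extension scheme exactly as in the proof of Theorem~\ref{theorem-weighted}, specialised to $\sigma=\frac12$; this case is in fact simpler than a general $A_2$ power, because the first order term in the extension equation~(\ref{equation}) vanishes and the extension weight $|y|^{1-2\sigma}$ becomes identically $1$. First I would apply Theorem~\ref{theorem-ABGR} with $\mathcal X=L^2(\Omega)$ and $T=L$, the sectorial operator attached to the form $\mathcal E$ of~(\ref{Eni}), to the datum $u\in\mathcal D(L^{1/2})$. This produces
\[
U(x,y)=\frac{1}{\Gamma(1/2)}\int_0^\infty e^{-tL}\bigl((tL)^{1/2}u\bigr)(x)\,e^{-y^2/4t}\,\frac{dt}{t},
\]
which is $L^2(\Omega)$-valued, satisfies $\left\Vert U(\cdot,y)\right\Vert_{L^2(\Omega)}\le C\left\Vert u\right\Vert_{L^2(\Omega)}$ for all $y>0$, and solves $-L_xU+U_{yy}=0$ in $\Omega\times(0,\infty)$ with $U(\cdot,0)=u$. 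Since $\mathbf B$ satisfies Condition~\ref{condition-D}, this last equation is the divergence-form equation $-\func{div}_{(x,y)}\mathcal B\,\nabla_{(x,y)}U=0$, where $\mathcal B(x,y)=\mathrm{diag}(\mathbf B(x),1)$ is the $(n+1)\times(n+1)$ block-diagonal matrix (the lower order terms of $\mathcal L$, which do not involve $y$, are carried along); so $U$ is a local weak solution of it in $\Omega'\times(0,\infty)$.

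Next I would extend $U$ evenly, $\widetilde U(x,y)=U(x,|y|)$, and show that $\widetilde U$ is a local weak solution of $-\func{div}_{(x,y)}\mathcal B\,\nabla_{(x,y)}\widetilde U=0$ in $\Omega'\times\mathbb R$. This is the reflection argument of Caffarelli--Silvestre, carried out with a cutoff $\eta_\varepsilon(y)$ in the $y$ variable precisely as in the proof of Theorem~\ref{theorem-weighted}; its only input is that the conormal (i.e.\ $y$-) derivative of $U$ at $y=0^+$ vanishes, which follows from~(\ref{limits}) with $\sigma=\frac12$ together with the hypothesis $L^{1/2}u=0$ on $\Omega'$, since $\lim_{y\to0^+}U_y(x,y)$ is a nonzero constant times $L^{1/2}u(x)$. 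With $\sigma=\frac12$ there is no degenerate factor $|y|^{1-2\sigma}$ to manage, so this step is shorter than its counterpart in Theorem~\ref{theorem-weighted}.

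Then I would verify that on any bounded cylinder $\Omega'\times I$ the operator $-\func{div}_{(x,y)}\mathcal B\,\nabla_{(x,y)}$, together with the reflected lower order terms, satisfies the hypotheses of Theorem~\ref{theorem-SW} relative to the augmented system $\widetilde{\mathbb X}=\{a_j\partial_{x_j}\}_{j=1}^n\cup\{\partial_y\}$. Conditions~(\ref{noni-A})--(\ref{noni-D}) transfer directly: $\mathcal B$ is bounded and nonnegative on $\Omega'\times I$, a vector field subunit with respect to $\mathbf B$ is subunit with respect to $\mathcal B$, and the lower order coefficients (extended evenly in $y$) keep their $L^q$ integrability on the bounded cylinder. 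For Condition~\ref{condition-D}: part~(\ref{condition-D-C}) holds with $c_{\mathcal B}=\min(c_{\mathbf B},1)$ and $C_{\mathcal B}=\max(C_{\mathbf B},1)$, since $\xi'\mathbf B(x)\xi+\xi_{n+1}^2$ lies between those multiples of $\sum_{j\le n}a_j(x)\xi_j^2+\xi_{n+1}^2$; part~(\ref{condition-D-A}) holds because each $a_j$ is $RH^\infty$ in $x_i$ for $i\neq j$, $i\le n$, by hypothesis and is constant (hence $RH^\infty$) in $y$, while the new coefficient $\widetilde a_{n+1}\equiv1$ is constant; and the flag condition~(\ref{condition-D-B}) is checked by a short two-case argument on index sets $\mathcal I\subsetneq\{1,\dots,n+1\}$: if $n+1\notin\mathcal I$ one takes $j=n+1$ (as $\widetilde a_{n+1}\equiv1$ never vanishes), and if $n+1\in\mathcal I$ one uses the flag condition for $\mathbb X$ on the proper subset $\mathcal I\setminus\{n+1\}$ of $\{1,\dots,n\}$ together with $\mathcal V_{\mathcal I}\supseteq\mathcal V_{\mathcal I\setminus\{n+1\}}$, via the criterion recalled in the proof of Example~\ref{example-diag}. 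Each of these verifications is routine; the point is only to make sure that \emph{all} of Sawyer--Wheeden's structural requirements genuinely pass to the one-dimension-higher operator.

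Finally I would compare subunit balls and conclude, as in the proof of Theorem~\ref{theorem-weighted}. A subunit curve for $\widetilde{\mathbb X}$ projects onto a subunit curve for $\mathbb X$ of no greater length, and conversely a subunit curve for $\mathbb X$ lifts at constant height, so the $\widetilde{\mathbb X}$-distance from $(x_0,0)$ to $(x,0)$ equals the $\mathbb X$-distance from $x_0$ to $x$; hence $B_r^{\widetilde{\mathbb X}}(x_0,0)\cap(\Omega'\times\{0\})=B_r^{\mathbb X}(x_0)\times\{0\}$, while since $\partial_y$ has unit speed one has $B_r^{\mathbb X}(x_0)\times(-r,r)\subseteq B_{2r}^{\widetilde{\mathbb X}}(x_0,0)\subseteq B_{2r}^{\mathbb X}(x_0)\times(-2r,2r)$, whence $|B_{2r}^{\widetilde{\mathbb X}}(x_0,0)|\approx r\,|B_{2r}^{\mathbb X}(x_0)|$ by the doubling property of the subunit metric measure. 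Applying Theorem~\ref{theorem-SW} to $\widetilde U$ on $B_{2r}^{\widetilde{\mathbb X}}(x_0,0)$, and bounding
\[
\int_{B_{2r}^{\widetilde{\mathbb X}}(x_0,0)}|\widetilde U|^2\,dx\,dy\le 2\int_0^{2r}\left\Vert U(\cdot,y)\right\Vert_{L^2(\Omega')}^2\,dy\le C\,r\,\left\Vert u\right\Vert_{L^2(\Omega)}^2
\]
exactly as in~(\ref{eq-that}), gives
\[
\limfunc{ess}\sup_{B_r^{\mathbb X}(x_0)}u=\limfunc{ess}\sup_{B_r^{\mathbb X}(x_0)\times\{0\}}\widetilde U\le\limfunc{ess}\sup_{B_r^{\widetilde{\mathbb X}}(x_0,0)}\widetilde U\le \frac{C_H}{|B_{2r}^{\mathbb X}(x_0)|^{1/2}}\left\Vert u\right\Vert_{L^2(\Omega)}.
\]
The Harnack inequality for nonnegative $u$ follows the same way, using that $e^{-tL}$ is positivity preserving (so $u\ge0$ forces $\widetilde U\ge0$) and the Harnack half of Theorem~\ref{theorem-SW}. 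The one genuinely delicate point is the transition from the global norm $\left\Vert u\right\Vert_{L^2(\Omega)}$ naturally produced above to the \emph{local} average $\bigl(\tfrac{1}{|B_{2r}(x_0)|}\int_{B_{2r}(x_0)}|u|^2\bigr)^{1/2}$ appearing in the statement: replacing, up to lower order error, the cylinder integral of $|\widetilde U|^2$ by $2r\int_{B_{2r}^{\mathbb X}(x_0)}|u|^2$ uses $U(\cdot,y)\to u$ in $L^2$ but, to control the error uniformly in $r$, one needs in addition off-diagonal (Gaussian-type) decay for the subelliptic heat semigroup $e^{-tL}$ in order to localise the Poisson extension. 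I expect this localisation to be the main obstacle in making the estimate as sharp as stated; the rest is a verbatim adaptation of the weighted-elliptic argument.
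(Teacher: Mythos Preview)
Your approach is essentially the same as the paper's: extend via Theorem~\ref{theorem-ABGR}, reflect evenly, check that the $(n+1)$-dimensional operator with block matrix $\mathcal B=\mathrm{diag}(\mathbf B,1)$ again satisfies Condition~\ref{condition-D}, and invoke Theorem~\ref{theorem-SW}. The paper's own proof is three short paragraphs; it asserts without argument that ``$\mathcal B$ satisfies Condition~\ref{condition-D}, since $\mathbf B$ does'' and refers back to the weighted-elliptic proof for the reflection step. Your verification of parts~(\ref{condition-D-A})--(\ref{condition-D-C}) and of the flag condition for the augmented system $\widetilde{\mathbb X}$ is correct and considerably more explicit than what the paper provides.

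Your closing concern is legitimate and worth recording. The argument, as written both by you and by the paper, naturally produces
\[
\limfunc{ess}\sup_{B_r}u\;\le\;\frac{C}{|B_{2r}(x_0)|^{1/2}}\,\|u\|_{L^2(\Omega)}
\]
with the \emph{global} $L^2$ norm on the right, exactly as in~(\ref{eq-that}); the paper's proof ends with ``an application of Theorem~\ref{theorem-SW}\dots\ finishes the proof'' and does not explain how to pass to the \emph{local} average $\bigl(|B_{2r}|^{-1}\int_{B_{2r}}|u|^2\bigr)^{1/2}$ that appears in the statement. Indeed the analogous weighted-elliptic result, Theorem~\ref{theorem-weighted}, is stated with the global norm $\|u\|_{L^2(w,\Omega)}$, which is all that argument delivers. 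So this is not a gap in your proposal relative to the paper; it is a discrepancy in the paper's statement of Theorem~\ref{theorem-noniso-2} that you have correctly identified. The localisation you sketch (via off-diagonal decay of the subelliptic heat semigroup) is the natural route if one insists on the local form, but it is additional work the paper does not carry out.
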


In view of Example \ref{example-diag}, is easy to check that the operator in
Theorem \ref{theorem-noniso} satisfies the hypotheses of Theorem \ref%
{theorem-noniso-2}.

\begin{proof}[Proof of Theorem \protect\ref{theorem-noniso-2}]
For $u\in \mathcal{D}\left( L^{\frac{1}{2}}\right) $ in $\Omega \subset 
\mathbb{R}^{n}$, let $U\left( x,y\right) $ be the extension of $u$ given by
Theorem \ref{theorem-ABGR}, i.e.%
\begin{equation*}
U\left( x,y\right) =\frac{1}{\Gamma \left( \sigma \right) }\int_{0}^{\infty
}e^{-t\mathcal{L}}\left( \left( tT\right) ^{\sigma }u\left( x\right) \right)
e^{-\frac{y^{2}}{4t}}\frac{dt}{t},\qquad y>0.
\end{equation*}

From (\ref{equation}) it readily follows that $U$ satisfies the equation 
\begin{equation*}
-LU\left( x,y\right) +\frac{\partial }{\partial y^{2}}U\left( x,y\right)
=0\qquad \text{in }\Omega \times \left( 0,\infty \right) ,
\end{equation*}

which can be written in the form%
\begin{eqnarray*}
\mathcal{L}U\left( x,y\right) &=&-\func{div}\mathcal{B}\nabla U\left(
x,y\right) +\sum_{i=1}^{N}b_{i}\left( x\right) R_{i}\left( x\right) U\left(
x,y\right) \\
&&+\sum_{i=1}^{N}S_{i}^{\prime }\left( x\right) c_{i}\left( x\right) U\left(
x,y\right) +d\left( x\right) U\left( x,y\right) =0,
\end{eqnarray*}%
where 
\begin{equation*}
\mathcal{B}\left( x,y\right) =\mathcal{B}\left( x\right) =\left( 
\begin{array}{cc}
\mathbf{B}\left( x\right) & 0 \\ 
0 & 1%
\end{array}%
\right)
\end{equation*}%
satisfies Condition \ref{condition-D}, since $\mathbf{B}$ does. Now, by
assumption we have that $\mathcal{L}^{\frac{1}{2}}u=0$ in $\Omega ^{\prime
}\Subset \Omega $. We extend $U$ from $\Omega ^{\prime }\times \left(
0,\infty \right) $ to $\Omega ^{\prime }\times \mathbb{R}$ as an even
function as before%
\begin{equation*}
\widetilde{U}\left( x,y\right) =\left\{ 
\begin{array}{lr}
U\left( x,y\right) & \qquad x\in \Omega ^{\prime },~y\geq 0 \\ 
U\left( x,-y\right) & \qquad x\in \Omega ^{\prime },~y<0%
\end{array}%
\right. .
\end{equation*}%
The proof that the extended function is a solution of $\mathcal{L}\tilde{U}%
=0 $ in $\Omega ^{\prime }\times \mathbb{R}$ is similar to the proof for
weighted elliptic operators provided in Section \ref%
{section-weighted-elliptic}; we point out that the crucial part of this
proof is dealing with the principal term of the operator, which structurally
is included in the operators considered in Section \ref%
{section-weighted-elliptic}. We omit the details.

An application of Theorem \ref{theorem-SW} to this solution and the fact
that $u\left( x\right) =U\left( x,0\right) $ finishes the proof of Theorem %
\ref{theorem-noniso-2}.
\end{proof}

\subsection{Nondivergence Form Operators\label{section-nond}}

For the type of operators (\ref{L-nond}) some extra hypotheses are required
to guarantee existence and uniqueness of solutions to Dirichlet problems. In
the classic text \cite{gilbarg-trudinger-98} it is shown that if the
coefficients are uniformly continuous the Dirichlet $\mathfrak{L}_{\mathbf{A}%
}u=f$ problem has a unique solution in any $C^{1,1}$ bounded domain if $f\in
L^{p}$ for $p>n/2$ and the boundary values are continuous. The most general
existence and a-priori regularity results known today requires that the
coefficients $a^{ij}$ have small BMO norm \cite{ch-fr-lo-93,rios-03}.

In \cite{duong-yan-02} Duong and Yan prove the following result about the
resolvent set of $\mathfrak{L}_{\mathbf{A}}$ (Lemma 3.1 in \cite%
{duong-yan-02}):

\begin{lemma}
\label{lemma-sect-nond}Let $\mathfrak{L}_{\mathbf{A}}$ be given by (\ref%
{L-nond}) satisfy (\ref{ellipticity}), and for $\omega $ given by (\ref%
{omega}) let $\theta \in \left( \omega ,\pi \right] $. Then there exist
positive constants $\varepsilon _{0}$ and $C_{\theta }$ such that if $%
\sup_{1\leq i,j\leq n}\left\Vert a^{ij}\right\Vert _{\ast }<\varepsilon _{0}$
then $-\Sigma _{\pi -\theta }\subset \rho \left( \mathfrak{L}_{\mathbf{A}%
}\right) $ and%
\begin{equation*}
\left\vert z\right\vert \left\Vert \left( z-\mathfrak{L}_{\mathbf{A}}\right)
^{-1}\right\Vert _{\mathcal{B}\left( L^{p}\left( \mathbb{R}^{n}\right)
\right) }\leq C_{\theta }\qquad \text{for all }z\in -\Sigma _{\pi -\theta }.
\end{equation*}%
Moreover $\mathfrak{L}_{\mathbf{A}}$ is one-to-one and it has dense range.
\end{lemma}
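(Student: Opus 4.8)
The plan is to reduce Lemma \ref{lemma-sect-nond} to the a priori $L^{p}$ estimates for nondivergence form operators with small $BMO$ coefficients (the Chiarenza--Frasca--Longo estimates, see \cite{ch-fr-lo-93, rios-03}), made uniform in a spectral parameter; to obtain the inclusion $-\Sigma_{\pi-\theta}\subset\rho(\mathfrak{L}_{\mathbf{A}})$ together with the resolvent bound by the method of continuity, comparing $\mathfrak{L}_{\mathbf{A}}$ with a constant-coefficient operator; and to read off injectivity and dense range from general properties of sectorial operators on the reflexive space $L^{p}(\mathbb{R}^{n})$.

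First I would record that, for $\varepsilon_{0}=\varepsilon_{0}(n,\lambda,\Lambda)$ small enough, the hypothesis $\|\mathbf{A}\|_{\ast}<\varepsilon_{0}$ yields the basic a priori estimate $\|D^{2}u\|_{L^{p}}\le C\,\|\mathfrak{L}_{\mathbf{A}}u\|_{L^{p}}$ for all $u\in W^{2,p}(\mathbb{R}^{n})$, with $C=C(n,\lambda,\Lambda,p)$. To bring in the parameter $z$ I would combine this with the constant-coefficient resolvent: after freezing $\mathbf{A}$ at a point, for a constant elliptic matrix $\mathbf{A}_{0}$ the operator $\mathfrak{L}_{\mathbf{A}_{0}}-z$ is a Fourier multiplier whose inverse has symbol $(a_{0}^{ij}\xi_{i}\xi_{j}-z)^{-1}$; this symbol satisfies Mihlin's condition and yields $\|(\mathfrak{L}_{\mathbf{A}_{0}}-z)^{-1}\|_{\mathcal{B}(L^{p})}\le C_{\theta}/|z|$ uniformly for $z\in-\Sigma_{\pi-\theta}$. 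A freezing-and-absorption argument in the spirit of \cite{ch-fr-lo-93, rios-03} then upgrades this to the parameter-dependent estimate
\[
|z|\,\|u\|_{L^{p}}+|z|^{1/2}\,\|\nabla u\|_{L^{p}}+\|D^{2}u\|_{L^{p}}\le C_{\theta}\,\|(\mathfrak{L}_{\mathbf{A}}-z)u\|_{L^{p}},\qquad u\in W^{2,p}(\mathbb{R}^{n}),
\]
valid uniformly for $z\in-\Sigma_{\pi-\theta}$; equivalently one may introduce $z$ by the classical device of adjoining an auxiliary variable $t$ and applying the $BMO$ estimate in dimension $n+1$ to the operator $\mathfrak{L}_{\mathbf{A}}-e^{i\phi}\partial_{t}^{2}$ for admissible rotations $\phi$. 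I expect this to be the main obstacle: one needs the $|z|$-scaling uniformly over a full complex subsector rather than merely along the negative ray, and in such a perturbative scheme the admissible sector shrinks as one approaches $[0,\infty)$ at a rate dictated by the ellipticity, which is what forces the restriction $\theta>\omega$.

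Granted this estimate, for each $z\in-\Sigma_{\pi-\theta}$ the bounded operator $\mathfrak{L}_{\mathbf{A}}-z\colon W^{2,p}(\mathbb{R}^{n})\to L^{p}(\mathbb{R}^{n})$ is injective with closed range. For surjectivity I would join $\mathbf{A}$ to the constant matrix $\lambda I$ along $\mathbf{A}_{s}=(1-s)\lambda I+s\mathbf{A}$, $s\in[0,1]$: each $\mathbf{A}_{s}$ satisfies (\ref{ellipticity}) with the same constants and $\|\mathbf{A}_{s}\|_{\ast}\le\|\mathbf{A}\|_{\ast}<\varepsilon_{0}$, so the a priori estimate above holds for $\mathfrak{L}_{\mathbf{A}_{s}}-z$ with a bound independent of $s$. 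At $s=0$ the operator $-\lambda\Delta-z$ is invertible on $L^{p}(\mathbb{R}^{n})$ with $|z|\,\|(-\lambda\Delta-z)^{-1}\|_{\mathcal{B}(L^{p})}\le C_{\theta}$, since $z\notin[0,\infty)$ and its symbol stays quantitatively away from $0$. The method of continuity then carries surjectivity from $s=0$ to $s=1$, so $-\Sigma_{\pi-\theta}\subset\rho(\mathfrak{L}_{\mathbf{A}})$ and $|z|\,\|(z-\mathfrak{L}_{\mathbf{A}})^{-1}\|_{\mathcal{B}(L^{p}(\mathbb{R}^{n}))}\le C_{\theta}$ for every $z\in-\Sigma_{\pi-\theta}$.

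Finally, if $u\in\mathcal{D}(\mathfrak{L}_{\mathbf{A}})=W^{2,p}(\mathbb{R}^{n})$ satisfies $\mathfrak{L}_{\mathbf{A}}u=0$, the basic estimate forces $D^{2}u=0$ almost everywhere, so $u$ is affine and, being in $L^{p}(\mathbb{R}^{n})$, identically $0$; hence $\mathfrak{L}_{\mathbf{A}}$ is one-to-one. Moreover the resolvent bound just obtained, which in particular holds along $(-\infty,0)$, makes $\mathfrak{L}_{\mathbf{A}}$ a sectorial operator on the reflexive space $L^{p}(\mathbb{R}^{n})$ — it is densely defined since $C_{c}^{\infty}(\mathbb{R}^{n})\subset W^{2,p}(\mathbb{R}^{n})$ is dense in $L^{p}(\mathbb{R}^{n})$ — so $L^{p}(\mathbb{R}^{n})=\ker(\mathfrak{L}_{\mathbf{A}})\oplus\overline{\mathrm{Ran}\,\mathfrak{L}_{\mathbf{A}}}$ by the standard splitting for sectorial operators on reflexive spaces (see \cite{haase06}). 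Since $\ker(\mathfrak{L}_{\mathbf{A}})=\{0\}$, the range of $\mathfrak{L}_{\mathbf{A}}$ is dense, which completes the plan.
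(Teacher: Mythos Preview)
The paper does not give its own proof of this lemma: it is quoted verbatim from Duong and Yan \cite{duong-yan-02} (Lemma~3.1 there), and the surrounding text only adds that closedness of $\mathfrak{L}_{\mathbf{A}}$ follows from the Chiarenza--Frasca--Longo a~priori estimates. So there is no in-paper argument to compare against.

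Your outline is a faithful reconstruction of how the Duong--Yan result is actually obtained: Mihlin multiplier bounds for the constant-coefficient resolvent, a freezing/perturbation argument in the style of \cite{ch-fr-lo-93} to pass to variable small-$BMO$ coefficients with the correct $|z|$-scaling, method of continuity for surjectivity, and the reflexive-space splitting $L^{p}=\ker\mathfrak{L}_{\mathbf{A}}\oplus\overline{\mathrm{Ran}\,\mathfrak{L}_{\mathbf{A}}}$ for dense range. The injectivity step is clean. The one place that would need care in a full write-up is exactly the step you flag: obtaining the parameter-dependent estimate uniformly over the whole sector $-\Sigma_{\pi-\theta}$ (not just the negative ray), which is where the angle $\omega=\arctan(\Lambda/\lambda)$ enters; Duong--Yan handle this via the complex ellipticity of $\mathbf{A}$, and your remark that the admissible sector is governed by the ellipticity constants is the right intuition.
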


In fact, if the BMO norm of the coefficients is small enough it follows that 
$\mathfrak{L}_{\mathbf{A}}$ is indeed an operator of type $\omega $ as
described in Definition \ref{def-type-w}. It only remains to show that $%
\mathfrak{L}_{\mathbf{A}}$ is closed. This follows from the a-priori
estimates in \cite{ch-fr-lo-91} obtained for VMO coefficients, and the fact
that only small BMO norm was used in their proofs (see \cite%
{ch-fr-lo-93,rios-03}). Indeed, from these papers it follows that the
a-priori estimates obtained for operators with uniformly continuous
coefficients in \cite{gilbarg-trudinger-98} (Theorem 9.11) hold for
operators with small BMO norm. More precisely:

\begin{theorem}[\protect\cite{ch-fr-lo-93}]
\label{theorem-nd-local}or $\mathfrak{L}_{\mathbf{A}}$ be given by (\ref%
{L-nond}) satisfy (\ref{ellipticity}), if $\Omega $ is any open bounded set
in $\mathbb{R}^{n}$ and $1\leq p<\infty $, there exists $\varepsilon
_{0}=\varepsilon _{0}\left( n,\lambda ,\Lambda ,p,\Omega \right) >0$ such
that if $\max_{1\leq i,j\leq n}\left\Vert a^{ij}\right\Vert _{\ast }\leq
\varepsilon _{0}$ in $\Omega $, and $u\in W_{\limfunc{loc}}^{2,p}\left(
\Omega \right) \bigcap L^{p}\left( \Omega \right) $, is such that $\mathfrak{%
L}_{\mathbf{A}}u=f\in L^{p}\left( \Omega \right) $, then for any $\Omega
^{\prime }\Subset \Omega $%
\begin{equation*}
\left\Vert u\right\Vert _{W^{2,p}\left( \Omega ^{\prime }\right) }\leq
C\left( \left\Vert u\right\Vert _{L^{p}\left( \Omega \right) }+\left\Vert
f\right\Vert _{L^{p}\left( \Omega \right) }\right) ,
\end{equation*}%
where $C$ depends on $n,\lambda ,\Lambda ,\varepsilon _{0}$,$p$, $\func{diam}%
\Omega ^{\prime }$ and $\limfunc{dist}\left( \Omega ^{\prime },\partial
\Omega \right) $.
\end{theorem}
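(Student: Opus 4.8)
The plan is to follow the Chiarenza--Frasca--Longo method, which upgrades the classical $L^p$ Calder\'on--Zygmund theory for operators with uniformly continuous coefficients (Theorem 9.11 in \cite{gilbarg-trudinger-98}) to the small-BMO setting of \cite{ch-fr-lo-93}. The starting point is a representation formula on small balls obtained by freezing coefficients. Fix $B_\rho(x_0)\Subset\Omega$ and write $\mathfrak{L}_{\mathbf{A}}u=f$ as
\[
a^{ij}(x_0)\frac{\partial^2 u}{\partial x_i\partial x_j}=f+\big(a^{ij}(x_0)-a^{ij}(x)\big)\frac{\partial^2 u}{\partial x_i\partial x_j}.
\]
Let $\Gamma_0$ be the fundamental solution of the constant-coefficient operator $a^{ij}(x_0)\partial_i\partial_j$ and $\zeta$ a smooth cutoff supported in $B_\rho(x_0)$ with $\zeta\equiv1$ on $B_{\rho/2}(x_0)$. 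Convolving $\Gamma_0$ against the equation for $v=\zeta u$ expresses $D^2 v$ as singular integral operators $T_{ij}$ applied to the $\mathfrak{L}_{\mathbf{A}}$-data of $v$, where the $T_{ij}$ are Calder\'on--Zygmund operators bounded on $L^p(\mathbb{R}^n)$ for $1<p<\infty$ with norm depending only on $n,\lambda,\Lambda,p$, plus a bounded multiple of the identity. The data of $v$ consists of $\zeta f$, terms carrying first and second derivatives of $\zeta$ (hence controlled by $\rho^{-1}\|Du\|_{L^p(B_\rho)}+\rho^{-2}\|u\|_{L^p(B_\rho)}$), and the perturbation term $\zeta\,(a^{ij}(x_0)-a^{ij})D_{ij}u$.

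The decisive step, and the main obstacle, is estimating the perturbation term: it produces commutators $[a^{ij},T_{ij}]$ acting on $D_{ij}v$. By the Coifman--Rochberg--Weiss theorem these are bounded on $L^p$ with norm $\lesssim\|a^{ij}\|_*$, but the crude global bound does not suffice. What is needed is a \emph{localized} commutator estimate: when $v$ is supported in a ball of radius $\rho$, the relevant operator norm is controlled by the BMO seminorm of $a^{ij}$ computed only over balls of size comparable to $\rho$, hence by $\varepsilon_0$ under the hypothesis $\max_{i,j}\|a^{ij}\|_*\le\varepsilon_0$. This is proved by splitting the commutator kernel into a near-diagonal part --- handled via sharp-maximal-function and $A_p$-weighted estimates for commutators, which extract the small local oscillation --- and a tail part controlled directly after the localization. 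Choosing $\varepsilon_0=\varepsilon_0(n,\lambda,\Lambda,p)$ small enough that the resulting constant is $<\tfrac12$, the term $\|D^2v\|_{L^p}$ is absorbed into the left-hand side and one obtains, for every $\rho$ below a threshold $\rho_0=\rho_0(n,\lambda,\Lambda,p)$,
\[
\|D^2 u\|_{L^p(B_{\rho/2}(x_0))}\le C\Big(\|f\|_{L^p(B_\rho(x_0))}+\rho^{-1}\|Du\|_{L^p(B_\rho(x_0))}+\rho^{-2}\|u\|_{L^p(B_\rho(x_0))}\Big).
\]

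Finally I would pass to a fixed $\Omega'\Subset\Omega$: cover $\overline{\Omega'}$ by finitely many balls $B_{\rho_0/2}(x_k)$ with $B_{\rho_0}(x_k)\subset\Omega$ (the number depending on $\func{diam}\Omega'$ and $\limfunc{dist}(\Omega',\partial\Omega)$ through $\rho_0$), sum the local estimates, and remove the first-order term by the interpolation inequality $\|Du\|_{L^p}\le\delta\|D^2u\|_{L^p}+C_\delta\|u\|_{L^p}$ with $\delta$ small and absorbed; a standard iteration over nested subdomains, as in the proof of Theorem 9.11 in \cite{gilbarg-trudinger-98} with a weighted-norm scheme to keep the cutoff contributions uniform, assembles this into the stated estimate with $C=C(n,\lambda,\Lambda,\varepsilon_0,p,\func{diam}\Omega',\limfunc{dist}(\Omega',\partial\Omega))$. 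Everything apart from the localized commutator bound is either classical constant-coefficient $L^p$ theory or routine bookkeeping; the genuine technical content is showing that $[a^{ij},T_{ij}]$ has operator norm $O(\varepsilon_0)$ on functions supported in a small ball, which is precisely where the small-BMO hypothesis is used.
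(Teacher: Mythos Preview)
Your sketch is a correct outline of the Chiarenza--Frasca--Longo argument: freeze coefficients, represent $D^2(\zeta u)$ via Calder\'on--Zygmund operators built from the constant-coefficient fundamental solution, control the perturbation by commutators $[a^{ij},T_{ij}]$ whose $L^p$ norm is $O(\|a^{ij}\|_*)$ via Coifman--Rochberg--Weiss, absorb using the smallness of $\varepsilon_0$, and finish with a covering and interpolation argument \`a la Theorem~9.11 of \cite{gilbarg-trudinger-98}.

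However, note that the paper does not give its own proof of this statement. Theorem~\ref{theorem-nd-local} is quoted as a result from \cite{ch-fr-lo-93} (see also \cite{ch-fr-lo-91,rios-03}), preceded only by the remark that the a~priori estimates of \cite{gilbarg-trudinger-98} for uniformly continuous coefficients extend to the small-BMO setting because only small BMO norm is actually used in the Chiarenza--Frasca--Longo proofs. So there is nothing to compare: your proposal reproduces the method of the cited reference, which is exactly what the paper is invoking.
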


The local estimates can be extended globally to all of $\mathbb{R}^{n}$ if $%
u $ and $f\in L^{p}\left( \mathbb{R}^{n}\right) $ and the BMO-norm of the
coefficients is small enough in cubes. Indeed, it suffices to cover $\mathbb{%
R}^{n}$ with a grid of closed unit cubes $\left\{ Q_{i}\right\}
_{i=1}^{\infty }$ and let $\widetilde{Q_{i}}$ denote the union of $Q_{i}$
with its $3^{n}-1$ immediate adjacent cubes. Then for each $Q_{i}$ we have
that%
\begin{eqnarray*}
\left\Vert u\right\Vert _{W^{2,p}\left( Q_{i}\right) }^{p} &=&\left\Vert
u\right\Vert _{L^{p}\left( Q_{i}\right) }^{p}+\left\Vert Du\right\Vert
_{L^{p}\left( Q_{i}\right) }^{p}+\left\Vert D^{2}u\right\Vert _{L^{p}\left(
Q_{i}\right) }^{p} \\
&\leq &C^{p}\left( \left\Vert u\right\Vert _{L^{p}\left( \widetilde{Q_{i}}%
\right) }^{p}+\left\Vert f\right\Vert _{L^{p}\left( \widetilde{Q_{i}}\right)
}^{p}\right)
\end{eqnarray*}%
where $C=C\left( n,\lambda ,\Lambda ,\varepsilon _{0},p\right) $ as in
Theorem \ref{theorem-nd-local}, is independent of each cube. Summing in $i$
and using that the dilated cubes $\widetilde{Q_{i}}$ have finite overlapping
yields%
\begin{equation}
\left\Vert u\right\Vert _{W^{2,p}\left( \mathbb{R}^{n}\right) }\leq C\left(
\left\Vert u\right\Vert _{L^{p}\left( \mathbb{R}^{n}\right) }+\left\Vert 
\mathfrak{L}_{\mathbf{A}}u\right\Vert _{L^{p}\left( \mathbb{R}^{n}\right)
}\right) .  \label{global-nd-est}
\end{equation}%
From this global estimate it follows that $\mathfrak{L}_{\mathbf{A}}$ is
closed, and therefore $\mathfrak{L}_{\mathbf{A}}:W^{2,p}\left( \mathbb{R}%
^{n}\right) \rightarrow L^{p}\left( \mathbb{R}^{n}\right) $ is surjective by
Lemma \ref{lemma-sect-nond}. This proves that under the extra hypothesis on
the coefficients of having small enough BMO norm, the operator is of type $%
\omega $, and hence it has a functional calculus and, in particular, the
fractional powers $\mathfrak{L}_{\mathbf{A}}^{\sigma }$ are well defined for 
$0<\sigma <1$.

We note that the global estimates (\ref{global-nd-est}) are false in general
without the small BMO norm assumption. In \cite{dongkim-14} the authors
showed that for each $1<p<\infty $ there exist an operator in $\mathbb{R}%
^{2} $ with constant coefficients in each quadrant such that (\ref%
{global-nd-est}) does no hold.

Applying the extension Theorem \ref{theorem-ABGR} to any $u\in \mathcal{D}%
\left( \mathfrak{L}_{\mathbf{A}}^{\sigma }\right) $, $0<\sigma <1$, we have
that the function 
\begin{equation}
U_{\sigma }\left( x,y\right) =\frac{1}{\Gamma \left( \sigma \right) }%
\int_{0}^{\infty }e^{-t\mathfrak{L}_{\mathbf{A}}}\left( \left( t\mathfrak{L}%
_{\mathbf{A}}\right) ^{\sigma }u\left( x\right) \right) e^{-\frac{y^{2}}{4t}}%
\frac{dt}{t}  \label{eq-Us}
\end{equation}%
satisfies $U_{\sigma }\left( \cdot ,y\right) \in \mathcal{D}\left( \mathfrak{%
L}_{\mathbf{A}}\right) =W^{2,p}\left( \mathbb{R}^{n}\right) $ for all $y>0$
and it is a solution of the initial value problem: $U_{\sigma }\left(
x,0\right) =u\left( x\right) $, and%
\begin{equation}
-\mathfrak{L}_{\mathbf{A}}U_{\sigma }\left( x,y\right) +\frac{1-2\sigma }{y}%
\frac{\partial }{\partial y}U_{\sigma }\left( x,y\right) +\frac{\partial ^{2}%
}{\partial y^{2}}U_{\sigma }\left( x,y\right) =0\qquad \text{in }\mathbb{R}%
^{n}\times \left( 0,\infty \right) ;  \label{ext-nd}
\end{equation}%
with the bounds%
\begin{equation}
\left\Vert \frac{\partial ^{m}}{\partial y^{m}}U_{\sigma }\left( \cdot
,y\right) \right\Vert _{L^{p}\left( \mathbb{R}^{n}\right) }\leq \frac{C_{m}}{%
y^{m}}\left\Vert u\right\Vert _{L^{p}\left( \mathbb{R}^{n}\right) },\qquad
m=0,1,\dots ;y>0.  \label{est-yy}
\end{equation}

From Theorem \ref{theorem-ABGR} we also have the estimates%
\begin{eqnarray}
\frac{\Gamma \left( -\sigma \right) }{4^{\sigma }\Gamma \left( \sigma
\right) }\mathfrak{L}_{\mathbf{A}}^{\sigma }u &=&\frac{1}{2\sigma }%
\lim_{y\rightarrow 0^{+}}y^{1-2\sigma }\frac{\partial }{\partial y}U_{\sigma
}\left( \cdot ,y\right)  \notag \\
&=&\lim_{y\rightarrow 0^{+}}\frac{U_{\sigma }\left( \cdot ,y\right) -u\left(
\cdot \right) }{y^{2\sigma }};  \label{first-der}
\end{eqnarray}%
and%
\begin{equation}
\frac{2\sigma -1}{4^{\sigma }}\frac{\Gamma \left( -\sigma \right) }{\Gamma
\left( \sigma \right) }\mathfrak{L}_{\mathbf{A}}^{\sigma }u=\frac{1}{2\sigma 
}\lim_{y\rightarrow 0^{+}}y^{2-2\sigma }\frac{\partial ^{2}}{\partial y^{2}}%
U\left( \cdot ,y\right) .  \label{second-der}
\end{equation}%
where the convergence is in $L^{p}\left( \mathbb{R}^{n}\right) $. Note that (%
\ref{ext-nd}) holds in the strong sense in $\mathbb{R}^{n}\times \left(
0,\infty \right) $ because of the global estimates (\ref{global-nd-est}) and
(\ref{est-yy}). Indeed, by (\ref{est-yy}) and (\ref{ext-nd}) it follows that%
\begin{equation*}
U_{\sigma }\left( \cdot ,y\right) ,~\mathfrak{L}_{\mathbf{A}}U_{\sigma
}\left( \cdot ,y\right) \in L^{p}\left( \mathbb{R}^{n}\right) \qquad \text{%
for all }y>0,
\end{equation*}%
hence (\ref{global-nd-est}) gives that $U_{\sigma }\left( \cdot ,y\right)
\in W^{2,p}\left( \mathbb{R}^{n}\right) $ for all $y>0$. Then we also have
that $\Delta U_{\sigma }\in L_{\limfunc{loc}}^{p}\left( \mathbb{R}^{n}\times
\left( 0,\infty \right) \right) $ and therefore $U_{\sigma }\in W_{\limfunc{%
loc}}^{2,p}\left( \mathbb{R}^{n}\times \left( 0,\infty \right) \right) $.
Then from (\ref{ext-nd}), The Sobolev embeddings, the local estimates in
Theorem \ref{theorem-nd-local}, and a bootstrapping argument it follows that%
\begin{equation}
U_{\sigma }\in W_{\limfunc{loc}}^{2,q}\left( \mathbb{R}^{n}\times \left(
0,\infty \right) \right) \qquad \text{for all }1<q<\infty .  \label{eq-Us-q}
\end{equation}%
And Morrey's inequality implies that $U_{\sigma }\in C^{1,\alpha }\left( 
\mathbb{R}^{n}\times \left( 0,\infty \right) \right) $ for all $0<\alpha <1$.

\begin{proposition}
\label{theorem-nd-extension}Let $\mathfrak{L}_{\mathbf{A}}$ be given by (\ref%
{L-nond}) satisfy (\ref{ellipticity}) with 
\begin{equation*}
\sup_{1\leq i,j\leq n}\left\Vert a^{ij}\right\Vert _{\ast }<\varepsilon _{0}
\end{equation*}%
for $\varepsilon _{0}$ as in Lemma \ref{lemma-sect-nond}, so that for $%
\omega $ given by (\ref{omega}) $\mathfrak{L}_{\mathbf{A}}$ is of type $%
\omega $. Let $\Omega \subset \mathbb{R}^{n}$ be a nonempty open set,
suppose $u\in \mathcal{D}\left( \mathfrak{L}_{\mathbf{A}}^{\sigma }\right)
\subset L^{p}\left( \mathbb{R}^{n}\right) $, for some $0<\sigma <1$ and $%
1<p<\infty $, and suppose that $u$ satisfies $\mathfrak{L}_{\mathbf{A}%
}^{\sigma }u=0$ in $\Omega $. Then there exists a function $V$ in $\Omega
\times \mathbb{R}$ satisfying 
\begin{equation*}
V,V_{z}\in L_{\limfunc{loc}}^{p}\left( \Omega \times \left( \mathbb{R}%
\right) \right) \quad \text{and}\qquad V\in W_{\limfunc{loc}}^{2,q}\left(
\Omega \times \left( \mathbb{R}\backslash \left\{ 0\right\} \right) \right)
\end{equation*}%
for all $1<q<\infty $, such that $V\left( \cdot ,0\right) =u$ in $%
L^{p}\left( \Omega \right) $ and $V$ is a strong solution of the problem%
\begin{equation}
-\mathfrak{L}_{\mathbf{A}}V\left( x,z\right) +z^{2-\frac{1}{\sigma }}\frac{%
\partial ^{2}}{\partial z^{2}}V\left( x,z\right) =0  \label{ext-nd-2}
\end{equation}%
in$~\Omega \times \mathbb{R}\backslash \left\{ 0\right\} $ such that $%
V\left( \cdot ,z\right) \rightarrow u$ as $z\rightarrow 0$ in $L^{p}\left(
\Omega \right) $.
\end{proposition}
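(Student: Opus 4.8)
The plan is to obtain $V$ from the extension $U_\sigma$ of (\ref{eq-Us}) by a power change of the $y$ variable that eliminates the first order term in (\ref{ext-nd}), followed by an even reflection across $\{z=0\}$, in the spirit of the proof of Theorem~\ref{theorem-weighted}. All the quantitative input is already recorded: the equation (\ref{ext-nd}), the bounds (\ref{est-yy}), the boundary limit (\ref{first-der}), and the interior regularity (\ref{eq-Us-q}).

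First I would set
\[
z=g(y):=(2\sigma)^{-2\sigma}y^{2\sigma},\qquad y=g^{-1}(z)=2\sigma\,z^{1/(2\sigma)},
\]
and define $V(x,z)=U_\sigma\bigl(x,g^{-1}(z)\bigr)$ for $(x,z)\in\Omega\times(0,\infty)$. Since $g'(y)=(2\sigma)^{1-2\sigma}y^{2\sigma-1}$ solves $g''+\tfrac{1-2\sigma}{y}g'=0$, substituting $\partial_yU_\sigma=g'\,\partial_zV$ and $\partial^2_{yy}U_\sigma=(g')^2\partial^2_{zz}V+g''\partial_zV$ into (\ref{ext-nd}) annihilates the first order term and leaves $-\mathfrak{L}_{\mathbf{A}}V+(g')^2\partial^2_{zz}V=0$; a direct computation gives $(g'(y))^2=z^{2-1/\sigma}$, which is precisely (\ref{ext-nd-2}) on $\Omega\times(0,\infty)$. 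Because $g$ is a $C^\infty$ diffeomorphism of $(0,\infty)$ onto itself, (\ref{eq-Us-q}) transfers to $V\in W^{2,q}_{\limfunc{loc}}(\Omega\times(0,\infty))$ for every $1<q<\infty$, so the identity above is an a.e.\ identity between $L^q_{\limfunc{loc}}$ functions and $V$ is a strong solution there.

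Next I would read off the behaviour near $z=0$. By (\ref{est-yy}) with $m=0$ one has $\|V(\cdot,z)\|_{L^p(\Omega)}\le C\|u\|_{L^p}$ uniformly in $z>0$, and since $g^{-1}(z)\to0^+$ as $z\to0^+$, Theorem~\ref{theorem-ABGR} gives $V(\cdot,z)\to u$ in $L^p(\Omega)$; by the symmetry of the extension defined below this holds as $z\to0^-$ as well, so $V(\cdot,0)=u$ as an $L^p(\Omega)$ trace. Differentiating, $\partial_zV=(g')^{-1}\partial_yU_\sigma=(2\sigma)^{2\sigma-1}y^{1-2\sigma}\partial_yU_\sigma$, whence (\ref{first-der}) yields $\partial_zV(\cdot,z)\to c_\sigma\,\mathfrak{L}_{\mathbf{A}}^\sigma u$ in $L^p(\mathbb{R}^n)$ as $z\to0^+$ for an explicit constant $c_\sigma$; the hypothesis $\mathfrak{L}_{\mathbf{A}}^\sigma u=0$ in $\Omega$ then forces $\partial_zV(\cdot,z)\to0$ in $L^p(\Omega)$. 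Combining this with (\ref{est-yy}) for $m=1$ (which bounds $\|\partial_zV(\cdot,z)\|_{L^p(\Omega)}$ on compact subsets of $(0,\infty)$), the map $z\mapsto\|\partial_zV(\cdot,z)\|_{L^p(\Omega)}$ is bounded on $(0,R)$ for every $R>0$.

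Finally I would extend $V$ to $\Omega\times\mathbb{R}$ by setting $V(x,z):=V(x,-z)$ for $z<0$. The two $L^p$ bounds just obtained give $V,\partial_zV\in L^p_{\limfunc{loc}}(\Omega\times\mathbb{R})$; the even reflection gives $V\in W^{2,q}_{\limfunc{loc}}(\Omega\times(\mathbb{R}\setminus\{0\}))$ for every $1<q<\infty$; and since $\mathfrak{L}_{\mathbf{A}}$ differentiates only in $x$ and $\partial^2_{zz}$ is invariant under $z\mapsto-z$, the equation (\ref{ext-nd-2}) valid for $z>0$ propagates to $z<0$ (with $z^{2-1/\sigma}$ read as $|z|^{2-1/\sigma}$), so $V$ is a strong solution on $\Omega\times(\mathbb{R}\setminus\{0\})$. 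That the glued function $\partial_zV$ is the genuine distributional $z$-derivative of $V$ on all of $\Omega\times\mathbb{R}$ follows by integrating $\int V\,\partial_z\varphi$ by parts separately over $\{z>0\}$ and $\{z<0\}$ for $\varphi\in C^\infty_c(\Omega\times\mathbb{R})$, the ensuing boundary term at $z=0$ being proportional to $V(\cdot,0^+)-V(\cdot,0^-)=0$. The one place where the hypothesis $\mathfrak{L}_{\mathbf{A}}^\sigma u=0$ is essential is the coherence of the reflection in the Neumann direction: because the Neumann trace $c_\sigma\mathfrak{L}_{\mathbf{A}}^\sigma u$ vanishes, the odd function $\partial_zV$ has no jump at $z=0$, which is exactly the analogue of the cancellation $\varepsilon^{1-2\sigma}\partial_yU(x,\varepsilon)\to(L_w)^\sigma u=0$ exploited in the proof of Theorem~\ref{theorem-weighted}. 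I expect this gluing at $z=0$ to be the only delicate point; the change of variables and the transfer of (\ref{eq-Us-q}) are routine, the latter because every composition takes place away from $y=0$.
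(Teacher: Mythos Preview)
Your proposal is correct and follows essentially the same route as the paper: the same power change of variable $z=(y/(2\sigma))^{2\sigma}$ to pass from (\ref{ext-nd}) to (\ref{ext-nd-2}), the transfer of (\ref{eq-Us-q}) through this diffeomorphism, the use of (\ref{first-der}) together with $\mathfrak{L}_{\mathbf{A}}^{\sigma}u=0$ to kill the Neumann trace, and the even reflection across $\{z=0\}$. If anything, you are slightly more explicit than the paper in verifying $(g')^{2}=z^{2-1/\sigma}$ and in checking that the glued $\partial_{z}V$ is the honest distributional derivative on $\Omega\times\mathbb{R}$.
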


\begin{proof}
Let $U$ be given by (\ref{eq-Us}), and set $\tilde{V}\left( x,z\right)
=U\left( x,2\sigma z^{\frac{1}{2\sigma }}\right) =U\left( x,y\right) $,
performing the change of variables $z=\left( \frac{y}{2\sigma }\right)
^{2\sigma }$ as in \cite{caffarelli-silvestre-07}. Equation (\ref{ext-nd})
becomes (\ref{ext-nd-2}), which holds in the strong sense in $\mathbb{R}%
^{n}\times \left( 0,\infty \right) $ since, by (\ref{eq-Us-q}), we have that 
$\tilde{V}\in W_{\limfunc{loc}}^{2,q}\left( \mathbb{R}^{n}\times \left(
0,\infty \right) \right) $ for all $1<q<\infty $.

By (\ref{first-der}) we have that%
\begin{equation*}
\tilde{V}_{z}\left( \cdot ,z\right) =\left( 2\sigma \right) ^{2\sigma
-1}y^{1-2\sigma }U_{y}\left( \cdot ,y\right) \rightarrow \frac{\left(
2\sigma \right) ^{2\sigma }\Gamma \left( -\sigma \right) }{4^{\sigma }\Gamma
\left( \sigma \right) }\mathfrak{L}_{\mathbf{A}}^{\sigma }u
\end{equation*}%
in $L^{p}\left( \mathbb{R}^{n}\right) $ as $z\rightarrow 0^{+}$. Now we set%
\begin{equation*}
V\left( x,z\right) =\left\{ 
\begin{array}{cc}
V\left( x,z\right) & \qquad x\in \Omega ,~z\geq 0 \\ 
V\left( x,-z\right) & \qquad x\in \Omega ,~z<0%
\end{array}%
\right. ,
\end{equation*}%
and since from $\mathfrak{L}_{\mathbf{A}}^{\sigma }u\equiv 0$ in $\Omega $,
it follows that $V_{z}\left( \cdot ,z\right) \rightarrow 0$ in $L^{p}\left(
\Omega \right) $ as $z\rightarrow 0$. Hence $\frac{\partial V}{\partial z}$
extends to $\Omega \times \mathbb{R}$ as an $L^{p}$ function on any bounded
strip $\Omega \times \left( -N,N\right) $. By (\ref{est-yy}) and Theorem \ref%
{theorem-ABGR} we have that $V\in L_{\limfunc{loc}}^{p}\left( \Omega \times
\left( \mathbb{R}\right) \right) $ and that $V\left( \cdot ,z\right)
\rightarrow u$ as $z\rightarrow 0$ in $L^{p}\left( \Omega \right) $.
\end{proof}

Theorem \ref{theorem-nond} is now a consequence of this result.

\begin{proof}[Proof or Theorem \protect\ref{theorem-nond}]
Let $U=V_{\sigma }\in W_{\limfunc{loc}}^{2,q}\left( \Omega \times \left( 
\mathbb{R}\backslash \left\{ 0\right\} \right) \right) $ be as in
Proposition \ref{theorem-nd-extension}. By Theorem \ref{theorem-ABGR}, (\ref%
{ext-nd-2}), and the hypothesis $u\in \mathcal{D}\left( \mathfrak{L}_{%
\mathbf{A}}\right) $, it follows that%
\begin{equation*}
\lim_{z\rightarrow 0}z^{2-\frac{1}{\sigma }}U_{zz}\left( \cdot ,z\right)
=\lim_{y\rightarrow 0}\mathfrak{L}_{\mathbf{A}}U\left( \cdot ,y\right) =%
\mathfrak{L}_{\mathbf{A}}u\in L^{p}\left( \Omega \right)
\end{equation*}%
where the limit is in $L^{p}\left( \mathbb{R}^{n}\right) $. From $0<\sigma <%
\frac{p}{p+1}$ it follows that 
\begin{equation*}
\left( 2-\frac{1}{\sigma }\right) \frac{p}{p-1}<1,
\end{equation*}%
Let $1<r<p$ such that $r\left( 2-\frac{1}{\sigma }\right) \frac{p}{p-r}<1$.
For each $N>0$ if $\Omega ^{\prime }\Subset \Omega $ we have%
\begin{eqnarray*}
&&\left( \int\limits_{\Omega ^{\prime }}\int\limits_{-N}^{N}\left\vert
U_{zz}\left( x,z\right) \right\vert ^{r}~dz~dx\right) ^{\frac{1}{r}} \\
&\leq &\left( \int\limits_{\Omega ^{\prime }}\int\limits_{-N}^{N}\left\vert
z^{2-\frac{1}{\sigma }}U_{zz}\left( x,z\right) \right\vert ^{p}~dz~dx\right)
^{\frac{1}{p}}\left( \int\limits_{\Omega ^{\prime
}}\int\limits_{-N}^{N}z^{-\left( 2-\frac{1}{\sigma }\right) r\frac{p}{p-r}%
}~dz~dx\right) ^{\frac{p-r}{rp}} \\
&\leq &C_{N,\sigma ,p,\Omega ^{\prime }}\left( \int\limits_{\Omega ^{\prime
}}\int\limits_{0}^{N}\left\vert z^{2-\frac{1}{\sigma }}U_{zz}\left(
x,z\right) \right\vert ^{p}~dz~dx\right) ^{\frac{1}{p}}<\infty .
\end{eqnarray*}%
Thus, $U_{zz}$ extends as an $L_{\limfunc{loc}}^{r}$ function in all bounded
strips $\Omega \times \left( -N,N\right) $, $N>0$. By proposition \ref%
{theorem-nd-extension} we already have that $U_{z}\in $ $L_{\limfunc{loc}%
}^{r}\left( \Omega \times \left( -N,N\right) \right) $. Moreover, by the
local estimates in Theorem \ref{theorem-nd-local} we have that $U\left(
\cdot ,z\right) \in W_{\limfunc{loc}}^{2,p}\left( \Omega \right) \subset W_{%
\limfunc{loc}}^{2,r}\left( \Omega \right) $ for all $z$, with locally
uniform bounds for bounded $z$. Hence $\Delta U\in L_{\limfunc{loc}%
}^{r}\left( \Omega \times \mathbb{R}\right) $ and consequently $U\in W_{%
\limfunc{loc}}^{2,r}\left( \Omega \times \mathbb{R}\right) $. Then by
Theorem \ref{theorem-nd-local}, the Sobolev's embedding, and a bootstrapping
argument we conclude that 
\begin{equation*}
U\in W_{\limfunc{loc}}^{2,q}\left( \Omega \times \mathbb{R}\right) \qquad 
\text{for all }1<q<\infty .
\end{equation*}%
Then Morrey's inequality implies that $U\in C^{1,\alpha }\left( \Omega
\times \mathbb{R}\right) $ for all $0<\alpha <1$. Since $u\left( z\right)
=U\left( z,0\right) $ we conclude that $u\in C^{1,\alpha }\left( \Omega
\right) $.
\end{proof}

\section{Appendix\label{section-appendix}}

Let $\mathcal{X}$ be a Banach space; $\mathfrak{L}\left( \mathcal{X}\right) $
denotes the algebra of bounded linear operators on $\mathcal{X}$. Given a
linear operator $T$ on $\mathcal{X}$, the resolvent set $\rho \left(
T\right) $ is the set of $\lambda \in \mathbb{C}$ such that $T-\lambda $ is
one to one and $\mathsf{R}_{T}\left( \lambda \right) =\left( T-\lambda
\right) ^{-1}\in \mathfrak{L}\left( \mathcal{X}\right) $; $\mathsf{R}%
_{T}\left( \lambda \right) $ is called the resolvent the operator of $T$ at $%
\lambda $. The spectrum of $T$, $\sigma \left( T\right) $ is the complement
of $\rho \left( T\right) $ in $\mathbb{C}$, together with $\infty $ if $T$
is not bounded.

We consider closed operators $T:\mathcal{D}\left( T\right) \subset \mathcal{%
X\rightarrow X}$ where $\mathcal{X}$ is a Banach space. Such operators have
a holomorphic functional calculus. We denote by $\mathcal{C\ell }\left( 
\mathcal{X}\right) $ the set of all closed operators on $\mathcal{X}$; note
that $\mathcal{L}\left( \mathcal{X}\right) \subset \mathcal{C\ell }\left( 
\mathcal{X}\right) $.

\subsection{Non-symmetric Dirichlet forms}

Dirichlet forms can be defined in general Hilbert spaces, but for our
applications it suffices to consider $L^{2}$ spaces. Specifically, let $X$
be a locally compact metric space and $\mu $ is a $\sigma $-finite positive
Radon measure on $X$ such that $\limfunc{support}\mu =X$. We will work on
the real Hilbert space $L^{2}\left( X,\mu \right) $ with the usual $L^{2}$%
-inner product $\left\langle \cdot ,\cdot \right\rangle $, and in this
context $\left\Vert f\right\Vert $ denotes the $L^{2}$-norm $\left\langle
f,f\right\rangle ^{1/2}$. The basics of non-symmetric Dirichlet forms
presented here can be found in chapter 1 of \cite{oshima13}; for symmetric
Dirichlet forms see \cite{fukushima-oshima-takeda-11}.

A bilinear form $\mathcal{E}$ with domain $\mathcal{D}\left[ \mathcal{E}%
\right] =\mathcal{F}\subset L^{2}\left( X,\mu \right) $ is a function $%
\mathcal{E}:\mathcal{F}\times \mathcal{F}\rightarrow \mathbb{R}$ which is
linear in each variable separately.

\begin{definition}
\label{def-Dirichlet}A bilinear form $\mathcal{E}$ on $\mathcal{F}\subset
L^{2}\left( X,\mu \right) $ is a \emph{(semi-)Dirichlet form} on $%
L^{2}\left( X,\mu \right) $ if $\mathcal{F}$ is a dense subspace of $%
L^{2}\left( X,\mu \right) $ and the following conditions are satisfied:%
\renewcommand{\theenumi}{\Roman{enumi}}%

\begin{enumerate}
\item \label{E1-lb}$\mathcal{E}$ is \emph{lower bounded}: There exists a
nonnegative constant $\alpha _{0}$ such that 
\begin{equation*}
\mathcal{E}_{\alpha _{0}}\left( u,u\right) \geq 0\qquad \text{for all }u\in 
\mathcal{F},
\end{equation*}%
where $\mathcal{E}_{\alpha _{0}}\left( u,v\right) =\mathcal{E}\left(
u,v\right) +\alpha _{0}\left\langle u,v\right\rangle $.

\item \label{E2-sector}$\mathcal{E}$ \emph{satisfies the sector condition: }%
There exists a constant $K\geq 1$ such that%
\begin{equation*}
\left\vert \mathcal{E}\left( u,v\right) \right\vert \leq K\mathcal{E}%
_{\alpha _{0}}\left( u,u\right) ^{1/2}\mathcal{E}_{\alpha _{0}}\left(
v,v\right) ^{1/2}\qquad \text{for all }u,v\in \mathcal{F}\text{.}
\end{equation*}

\item \label{E3-complete}$\mathcal{F}$ is a Hilbert space relative to the
inner product%
\begin{equation*}
\mathcal{E}_{\alpha }^{\left( s\right) }\left( u,v\right) =\frac{1}{2}\left( 
\mathcal{E}_{\alpha }\left( u,v\right) +\mathcal{E}_{\alpha }\left(
v,u\right) \right) \qquad \text{for all }\alpha >\alpha _{0}.
\end{equation*}

\item \label{E4-Markov}$\mathcal{E}$ \emph{satisfies the Markov property:}
for all $u\in \mathcal{F}$ and $a\geq 0$, then $u\wedge a\in \mathcal{F}$ ,%
\begin{equation*}
\mathcal{E}\left( u\wedge a,u-u\wedge a\right) \geq 0.
\end{equation*}
\end{enumerate}
\end{definition}

\bigskip Note that for $\alpha >\alpha _{0}$ we have, with $K$ as in (\ref%
{E3-complete}) and $K_{\alpha }=K+\frac{\alpha }{\alpha -\alpha _{0}}$,%
\begin{equation*}
\left\vert \mathcal{E}_{\alpha }\left( u,v\right) \right\vert \leq K_{\alpha
}\mathcal{E}_{\alpha }\left( u,u\right) ^{1/2}\mathcal{E}_{\alpha }\left(
v,v\right) ^{1/2}\qquad \text{for all }u,v\in \mathcal{F}\text{.}
\end{equation*}%
In particular, $\mathcal{E}_{\alpha }$ and $\mathcal{E}_{\beta }$ determine
equivalent metrics for any fixed $\alpha ,\beta >0$.

When $\alpha _{0}=0$ in the above definition we say that $\mathcal{E}$ is a 
\emph{nonnegative Dirichlet form}. If a nonnegative Dirichlet form $\mathcal{%
E}$ also satisfies 
\begin{equation}
\left( u-u\wedge a,u\wedge a\right) \geq 0\qquad \text{for all }u\in 
\mathcal{F}\text{ and }a\geq 0,  \label{E5-Markov-conj}
\end{equation}%
then we say that $\mathcal{E}$ is a \emph{non-symmetric Dirichlet form}. If
a non-symmetric Dirichlet form satisfies $\mathcal{E}\left( u,v\right) =%
\mathcal{E}\left( v,u\right) $ for all $u,v\in \mathcal{=F}$ then $\mathcal{E%
}$ is called \emph{symmetric Dirichlet form}.

The framework of Dirichlet forms includes the first two applications that we
will present in this work. Associated to each Dirichlet form $\mathcal{E}$
there is an operator $-L_{\mathcal{E}}$ which is the generator of a strongly
continuous semigroup $e^{-tL_{\mathcal{E}}}$. In fact this result is true
for forms that are just closed, the following theorem can be found in \cite%
{oshima13} (Theorem 1.1.2).

\begin{theorem}
\label{theorem-Dirichlet}Suppose $\mathcal{E}$ is a bilineal form with dense
domain $\mathcal{F}\subset L^{2}\left( X,\mu \right) $, and which satisfies (%
\ref{E1-lb}), (\ref{E2-sector}), and (\ref{E3-complete}) from Definition \ref%
{def-Dirichlet}. Then there exist strongly continuous semigroups $\left\{
T_{t}\right\} _{t>0}$ and $\left\{ \widehat{T}_{t}\right\} _{t>0}$ on $%
L^{2}\left( X,\mu \right) $ such that $\left\Vert T_{t}\right\Vert \leq
e^{\alpha _{0}t}$, $\left\Vert \widehat{T}_{t}\right\Vert \leq e^{\alpha
_{0}t}$, $\left\langle T_{t}f,g\right\rangle =\left\langle f,\widehat{T}%
_{t}g\right\rangle $ and whose resolvents%
\begin{equation*}
G_{\alpha }=\int_{0}^{\infty }e^{-\alpha t}T_{t}~dt\qquad \text{and}\qquad 
\widehat{G}_{\alpha }=\int_{0}^{\infty }e^{-\alpha t}\widehat{T}_{t}~dt
\end{equation*}%
satisfy%
\begin{equation*}
\mathcal{E}_{\alpha }\left( G_{\alpha }f,u\right) =\left\langle
f,u\right\rangle =\mathcal{E}_{\alpha }\left( u,\widehat{G}_{\alpha
}f\right) ,
\end{equation*}%
for all $f\in L^{2}\left( X,\mu \right) $, $u\in \mathcal{F}$, and $\alpha
>0 $. Moreover, $T_{t}=e^{-tL_{\mathcal{E}}}$ and $\widehat{T}_{t}=e^{-t%
\widehat{L}_{\mathcal{E}}}$ where the generators $L_{\mathcal{E}}$ and $%
\widehat{L}_{\mathcal{E}},$ also called the associated operator to $\mathcal{%
E}$ and the associated adjoint operator to $\mathcal{E}$, respectively, have
domains $\mathcal{D}\left( L_{\mathcal{E}}\right) \subset \mathcal{F}$ and $%
\mathcal{D}\left( \widehat{L}_{\mathcal{E}}\right) \subset \mathcal{F}$
which are dense in $L^{2}\left( X,\mu \right) $; for all $\alpha >\alpha
_{0} $ and $f\in L^{2}\left( X,\mu \right) $ we have 
\begin{equation*}
G_{\alpha }f=\left( \alpha -\alpha _{0}+L_{\mathcal{E}}\right) ^{-1}f\qquad 
\text{and}\qquad \widehat{G}_{\alpha }f=\left( \alpha -\alpha _{0}+\widehat{L%
}_{\mathcal{E}}\right) ^{-1}f;
\end{equation*}%
with the bounds%
\begin{equation}
\left\Vert G_{\alpha }\right\Vert \leq \frac{1}{\alpha -\alpha _{0}}\qquad 
\text{and}\qquad \left\Vert \widehat{G}_{\alpha }\right\Vert \leq \frac{1}{%
\alpha -\alpha _{0}}.  \label{eq-resolvent}
\end{equation}%
Finally, for all $u\in \mathcal{D}\left( L_{\mathcal{E}}\right) $, $v\in 
\mathcal{D}\left( \widehat{L}_{\mathcal{E}}\right) $, and $f\in \mathcal{F}$
we have the identities%
\begin{equation}
\left\langle L_{\mathcal{E}}u,f\right\rangle =\mathcal{E}_{\alpha
_{0}}\left( u,f\right) ,\text{\qquad and\qquad }\left\langle f,\widehat{L}_{%
\mathcal{E}}v\right\rangle =\mathcal{E}_{\alpha _{0}}\left( f,v\right) \text{%
.}  \label{eq-operators}
\end{equation}%
In this case we also have $\left\langle L_{\mathcal{E}}u,v\right\rangle =%
\mathcal{E}_{\alpha _{0}}\left( u,v\right) =\left\langle u,\widehat{L}_{%
\mathcal{E}}v\right\rangle .$
\end{theorem}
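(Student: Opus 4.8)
The plan is to derive Theorem~\ref{theorem-Dirichlet} from the non-symmetric Lax--Milgram lemma together with the Hille--Yosida generation theorem, the classical route for coercive closed bilinear forms. First I would fix $\alpha>\alpha_{0}$ and view $\mathcal{E}_{\alpha}$ as a bilinear form on the Hilbert space $(\mathcal{F},\mathcal{E}_{\beta}^{(s)})$ for some fixed $\beta>\alpha_{0}$: by (\ref{E1-lb}) it is coercive, since $\mathcal{E}_{\alpha}(u,u)=\mathcal{E}_{\alpha_{0}}(u,u)+(\alpha-\alpha_{0})\|u\|^{2}\geq(\alpha-\alpha_{0})\|u\|^{2}$; by the strengthened sector estimate $|\mathcal{E}_{\alpha}(u,v)|\leq K_{\alpha}\mathcal{E}_{\alpha}(u,u)^{1/2}\mathcal{E}_{\alpha}(v,v)^{1/2}$ recorded just after Definition~\ref{def-Dirichlet} it is bounded; and since $\mathcal{F}\hookrightarrow L^{2}(X,\mu)$ continuously, for every $f\in L^{2}(X,\mu)$ the functional $u\mapsto\langle f,u\rangle$ is bounded on $\mathcal{F}$. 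Applying Lax--Milgram to $\mathcal{E}_{\alpha}$ and to the form $(u,v)\mapsto\mathcal{E}_{\alpha}(v,u)$ produces unique $G_{\alpha}f,\widehat{G}_{\alpha}f\in\mathcal{F}$ with $\mathcal{E}_{\alpha}(G_{\alpha}f,u)=\langle f,u\rangle=\mathcal{E}_{\alpha}(u,\widehat{G}_{\alpha}f)$ for all $u\in\mathcal{F}$. Testing the first identity with $u=G_{\alpha}f$ and using coercivity and Cauchy--Schwarz gives $\|G_{\alpha}\|\leq(\alpha-\alpha_{0})^{-1}$, which is (\ref{eq-resolvent}); the double substitution $\mathcal{E}_{\alpha}(G_{\alpha}f,\widehat{G}_{\alpha}g)=\langle f,\widehat{G}_{\alpha}g\rangle=\langle g,G_{\alpha}f\rangle$ yields $\langle G_{\alpha}f,g\rangle=\langle f,\widehat{G}_{\alpha}g\rangle$, i.e.\ $\widehat{G}_{\alpha}=(G_{\alpha})^{\ast}$.

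Next I would show that $\{G_{\alpha}\}_{\alpha>\alpha_{0}}$ is a pseudo-resolvent arising from a closed, densely defined operator. Subtracting the defining identities for $G_{\alpha}$ and $G_{\beta}$ and writing $\mathcal{E}_{\beta}=\mathcal{E}_{\alpha}+(\beta-\alpha)\langle\cdot,\cdot\rangle$ gives $\mathcal{E}_{\alpha}(G_{\alpha}f-G_{\beta}f,u)=(\beta-\alpha)\langle G_{\beta}f,u\rangle=(\beta-\alpha)\mathcal{E}_{\alpha}(G_{\alpha}G_{\beta}f,u)$ for all $u\in\mathcal{F}$, hence the resolvent identity $G_{\alpha}-G_{\beta}=(\beta-\alpha)G_{\alpha}G_{\beta}$. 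Each $G_{\alpha}$ is injective, since $G_{\alpha}f=0$ forces $\langle f,u\rangle=0$ for every $u$ in the dense subspace $\mathcal{F}$; and $\mathrm{Ran}\,G_{\alpha}$ is dense, because $\mathcal{E}_{\alpha}(G_{\alpha}f,G_{\alpha}f)=\langle f,G_{\alpha}f\rangle\leq\|f\|^{2}/(\alpha-\alpha_{0})$ forces $\mathcal{E}_{\alpha_{0}}(G_{\alpha}f,G_{\alpha}f)\to 0$ and $G_{\alpha}f\to 0$ in $L^{2}$, whence from $\alpha\langle G_{\alpha}f,u\rangle=\langle f,u\rangle-\mathcal{E}(G_{\alpha}f,u)$, the sector condition, and $\|\alpha G_{\alpha}f\|\leq\alpha(\alpha-\alpha_{0})^{-1}\|f\|$ one obtains $\alpha G_{\alpha}f\to f$ (weakly, then strongly by convergence of norms). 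By the standard pseudo-resolvent theory there is then a unique closed operator $L_{\mathcal{E}}$ with $\mathcal{D}(L_{\mathcal{E}})=\mathrm{Ran}\,G_{\alpha}$ (independent of $\alpha$), $L_{\mathcal{E}}=(G_{\alpha})^{-1}-(\alpha-\alpha_{0})$, $G_{\alpha}=(\alpha-\alpha_{0}+L_{\mathcal{E}})^{-1}$, and with dense domain; applying the same construction to $\widehat{G}_{\alpha}$ gives $\widehat{L}_{\mathcal{E}}$, which equals $(L_{\mathcal{E}})^{\ast}$ by the adjointness of the resolvents.

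With the generator at hand I would invoke Hille--Yosida: setting $\mu=\alpha-\alpha_{0}>0$ we have $(\mu+L_{\mathcal{E}})^{-1}=G_{\mu+\alpha_{0}}$ and $\|(\mu+L_{\mathcal{E}})^{-n}\|\leq\|G_{\mu+\alpha_{0}}\|^{n}\leq\mu^{-n}$, so $-L_{\mathcal{E}}$ generates a strongly continuous semigroup $T_{t}=e^{-tL_{\mathcal{E}}}$, which is contractive (since $\langle L_{\mathcal{E}}u,u\rangle=\mathcal{E}_{\alpha_{0}}(u,u)\geq 0$), in particular $\|T_{t}\|\leq e^{\alpha_{0}t}$, and whose Laplace transform recovers the resolvent, $G_{\alpha}=\int_{0}^{\infty}e^{-(\alpha-\alpha_{0})t}T_{t}\,dt$. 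Likewise $\widehat{T}_{t}=e^{-t\widehat{L}_{\mathcal{E}}}$, and taking Hilbert-space adjoints in the Laplace transform, using $\widehat{G}_{\alpha}=(G_{\alpha})^{\ast}$ and the uniqueness of the Laplace transform, gives $\langle T_{t}f,g\rangle=\langle f,\widehat{T}_{t}g\rangle$. Finally, for $u=G_{\alpha}f\in\mathcal{D}(L_{\mathcal{E}})$ and $v\in\mathcal{F}$ the defining identity reads $\mathcal{E}(u,v)+\alpha\langle u,v\rangle=\langle f,v\rangle=\langle(\alpha-\alpha_{0}+L_{\mathcal{E}})u,v\rangle$; cancelling $\alpha\langle u,v\rangle$ gives $\langle L_{\mathcal{E}}u,v\rangle=\mathcal{E}_{\alpha_{0}}(u,v)$, the symmetric computation with $\widehat{G}_{\alpha}$ gives $\langle f,\widehat{L}_{\mathcal{E}}v\rangle=\mathcal{E}_{\alpha_{0}}(f,v)$, and combining the two for $u\in\mathcal{D}(L_{\mathcal{E}})$, $v\in\mathcal{D}(\widehat{L}_{\mathcal{E}})$ yields $\langle L_{\mathcal{E}}u,v\rangle=\mathcal{E}_{\alpha_{0}}(u,v)=\langle u,\widehat{L}_{\mathcal{E}}v\rangle$.

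The step I expect to be the main obstacle is the density of $\mathcal{D}(L_{\mathcal{E}})$ — equivalently the strong convergence $\alpha G_{\alpha}f\to f$ — together with the careful bookkeeping of the $\alpha_{0}$-shift needed to match $G_{\alpha}$ with the Laplace transform of $T_{t}$ and to transport the adjoint relation $\widehat{G}_{\alpha}=(G_{\alpha})^{\ast}$ to the semigroup level; the Lax--Milgram representation and the resolvent identity are routine.
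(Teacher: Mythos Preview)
The paper does not prove Theorem~\ref{theorem-Dirichlet}; it is quoted as a background result from \cite{oshima13} (Theorem~1.1.2), so there is no in-paper argument to compare against. Your sketch --- Lax--Milgram on $(\mathcal{F},\mathcal{E}_{\beta}^{(s)})$ to produce $G_{\alpha}$ and $\widehat{G}_{\alpha}$, the resolvent identity, density of the range via $\alpha G_{\alpha}f\to f$, and then Hille--Yosida --- is precisely the classical route taken in the references the paper cites (Oshima, Fukushima--Oshima--Takeda, Ouhabaz), and the individual steps are correct.

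One genuine point worth flagging, which you already sensed in your last paragraph: the $\alpha_{0}$-bookkeeping in the theorem \emph{as stated in the paper} is not internally consistent. If $T_{t}=e^{-tL_{\mathcal{E}}}$ with $\langle L_{\mathcal{E}}u,f\rangle=\mathcal{E}_{\alpha_{0}}(u,f)$, then $L_{\mathcal{E}}$ is accretive and the Laplace transform $\int_{0}^{\infty}e^{-\alpha t}T_{t}\,dt$ equals $(\alpha+L_{\mathcal{E}})^{-1}$, not $(\alpha-\alpha_{0}+L_{\mathcal{E}})^{-1}$; likewise the bound should read $\|T_{t}\|\leq 1$ rather than $e^{\alpha_{0}t}$. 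Your formula $G_{\alpha}=\int_{0}^{\infty}e^{-(\alpha-\alpha_{0})t}T_{t}\,dt$ is what actually matches the stated resolvent identity, but it disagrees with the Laplace-transform display in the theorem. This is a harmless normalization slip in the paper's transcription of the result from \cite{oshima13} (where the operator associated to $\mathcal{E}$, not $\mathcal{E}_{\alpha_{0}}$, is the generator), not a flaw in your argument; just be aware that you cannot simultaneously match all three displayed formulae, and pick one convention consistently.
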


Note that since $\mathcal{F}$ is dense in $L^{2}\left( X,\mu \right) $, the
operators $L_{\mathcal{E}}$ and $\widehat{L}_{\mathcal{E}}$ are
characterized by (\ref{eq-operators}). That is, if $h\in $ $L^{2}\left(
X,\mu \right) $ and $\left\langle h,f\right\rangle =\mathcal{E}_{\alpha
_{0}}\left( u,f\right) $ for some $u\in \mathcal{D}\left( L_{\mathcal{E}%
}\right) $ and all $f\in \mathcal{F}$, then $L_{\mathcal{E}}u=h$. Moreover,
because of (\ref{eq-operators}) and the completeness assumption (\ref%
{E3-complete}) we have that $L_{\mathcal{E}}$ is closed; i.e. if $u_{n}\in 
\mathcal{D}\left( L_{\mathcal{E}}\right) $, $u_{n}\rightarrow u$ in $%
L^{2}\left( X,\mu \right) $, and $f_{n}=L_{\mathcal{E}}u_{n}\rightarrow f$
in $L^{2}\left( X,\mu \right) $, then $u\in \mathcal{D}\left( L_{\mathcal{E}%
}\right) $ and $Lu=f$. Equivalently, $L_{\mathcal{E}}$ is closed if and only
if $\mathcal{D}\left( L_{\mathcal{E}}\right) $ is a Banach space with the
norm $\left\Vert u\right\Vert _{\mathcal{D}\left( L_{\mathcal{E}}\right)
}=\left\Vert u\right\Vert +\mathcal{E}\left( u,u\right) ^{\frac{1}{2}%
}\approx \mathcal{E}_{1}\left( u,u\right) ^{\frac{1}{2}}$, this condition is
guaranteed by (\ref{E3-complete}). Similar statements apply to $\widehat{L}_{%
\mathcal{E}}$.

\subsection{Sectorial operators and their calculus\label{section-calculus}}

All the operators we consider in our present applications are \emph{sectorial%
} operators. This type of operators was first introduced by Kato \cite%
{kato60}, but here we adopt the more general definition in which we do not
require the operator to be given by a sectorial form. Our definition is
precisely that of operators \emph{of type} $\omega $ as introduced by
McIntosh \cite{mcintosh86}, which was generalized as sectorial operators
more recently to include Banach spaces (see \cite{haase06,
bandara-mcintosh10, batty09} and references within).

Given $0\leq \omega <\pi $ we denote by $\Sigma _{\omega }$ the open complex
sector%
\begin{equation*}
\Sigma _{\omega }=\{z\in \mathbb{C}:z\neq 0,|\arg (z)|<\omega \}.
\end{equation*}

\begin{definition}
\label{def-type-w}Given $0\leq \omega <\pi $, an operator $T$ on a Banach
space $\mathcal{X}$ is said to be of \emph{type} $\omega $ , or \emph{%
sectorial} of angle $\omega $, if $T$ closed and densely defined in $%
\mathcal{X}$, $\sigma \left( T\right) \subset \overline{\Sigma _{\omega }}%
\bigcup \left\{ \infty \right\} $, and for each $\theta \in \left( \omega
,\pi \right] $ there exists a constant $c_{\theta }>0$ such that%
\begin{equation*}
\left\Vert \mathsf{R}_{T}\left( z\right) \right\Vert =\left\Vert \left(
z-T\right) ^{-1}\right\Vert _{\mathfrak{L}\left( \mathcal{X}\right) }\leq 
\frac{c_{\theta }}{\left\vert z\right\vert }
\end{equation*}%
for all non-zero $z\notin \Sigma _{\theta }$.
\end{definition}

If $T$ is a sectorial of angle $\omega $ on $\mathcal{X}$ with $0\leq \omega
<\pi /2$, the natural approach to establishing a holomorphic functional
calculus and defining $\varphi (T)$ for $\varphi \in H^{\infty }(\Sigma
_{\mu })$ is to first consider ${\varphi }$ in the smaller class $%
H_{0}^{\infty }\left( \Sigma _{\mu }\right) ,$ given by 
\begin{equation*}
H_{0}^{\infty }\left( \Sigma _{\mu }\right) =\left\{ {\varphi \in H}\left(
\Sigma _{\mu }\right) :\exists c,s>0\quad |\varphi (z)|\leq \frac{%
c\left\vert z\right\vert ^{s}}{\left( 1+\left\vert z\right\vert \right)
^{-2s}},~\forall z\in \Sigma _{\mu }\right\} .
\end{equation*}%
First, the semigroup $e^{-zT}$ existence may be established by the Cauchy
integral identity%
\begin{equation*}
e^{-zT}=\int_{\Gamma _{\alpha }}e^{-z\zeta }\mathsf{R}_{T}\left( \zeta
\right) ~d\zeta
\end{equation*}%
where $\Gamma _{\alpha }$ is the boundary of $\Sigma _{\alpha }$ with
positive orientation, and $\alpha $ is for any fixed angle such that $\omega
<\alpha <\pi /2-\arg z$. This semigroup is contractive ($\left\Vert
e^{-zT}\right\Vert \leq 1$) and holomorphic in the sector $\Sigma _{\pi
/2-\omega }$. Then we can write an integral representation of $\varphi (T)$
for any ${\varphi \in }H_{0}^{\infty }\left( \Sigma _{\mu }\right) $, with $%
\omega <\theta <\nu <\min (\mu ,\pi /2)$, namely: 
\begin{equation}
\varphi (T)=\int_{\Gamma _{\pi /2-\theta }}e^{-zT}\eta (z)\,dz,
\label{eqn:L2-holo-rep}
\end{equation}%
where 
\begin{equation}
\eta (z)=\frac{1}{2\pi i}\int_{\gamma _{\nu }(z)}e^{\zeta z}\varphi (\zeta
)\,d\zeta  \label{eqn:L2-holo-rep-eta}
\end{equation}%
with $\gamma _{\nu }(z)=\mathbb{R}^{+}e^{i\mathrm{sign}(\mathrm{Im}(z))\nu }$%
. Note that 
\begin{equation*}
|\eta (z)|\lesssim \min (1,|z|^{-s-1}),\quad z\in \Gamma _{\pi /2-\theta },
\end{equation*}%
so the representation (\ref{eqn:L2-holo-rep}) converges in $\mathcal{X}$,
and we have the bound 
\begin{equation}
\Vert \varphi (T)f\Vert \leq C\Vert \varphi \Vert _{\infty }\Vert f\Vert
,\qquad f\in H_{0}^{\infty }(\Sigma _{\mu }),  \label{eqn:L2-holo-bd}
\end{equation}%
where $\left\Vert f\right\Vert $ denotes the norm of $f$ in $\mathcal{X}$.

Now, if $T$ is an operator of type $\omega $ as above, then $T$ has an ${H}%
^{\infty }$ functional calculus and~(\ref{eqn:L2-holo-bd}) extends to all of 
$H^{\infty }\left( \Sigma _{\mu }\right) $ and also to holomorphic functions
of polynomial growth (see also~\cite{mcintosh86, couling96, haase06}). In
particular, this approach allows us to define (fractional) powers $T^{\sigma
}$ of $T$ for any $\sigma \in \mathbb{R}$. Of course, these operators will
not in general be bounded if $T$ is not bounded. The following is a
resolution of fractional powers $T^{\sigma }$, for $\sigma >0:$ 
\begin{equation}
T^{\sigma }x_{0}=\frac{1}{\Gamma \left( -\sigma \right) }\int_{0}^{\infty
}\left( e^{-tT}-1\right) x_{0}\frac{dt}{t^{1+\sigma }}.  \label{eq-fract}
\end{equation}%
This is the resolution is we adopt in Theorem \ref{theorem-ABGR}. By the
Spectral Mapping Theorem it follows that if $T$ is sectorial of angle $%
\omega $, then $T^{\sigma }$ is sectorial of angle $\sigma \omega $ for any $%
0<\sigma \leq 1$, $\overline{\mathcal{D}\left( T\right) }=\mathcal{D}\left(
T^{\sigma }\right) \supset \mathcal{D}\left( T\right) $, and $\mathcal{N}%
\left( T^{\sigma }\right) =\mathcal{N}\left( T\right) $, where $\mathcal{N}%
\left( T\right) $ denotes the kernel of $T$ (see Proposition 3.1.1. in \cite%
{haase06}).

A real bilinear form $\mathcal{E}$ with domain $\mathcal{D}\left[ \mathcal{E}%
\right] \subset \left( L^{2}\left( X,\mu \right) ,\mathbb{R}\right) $ has a
natural extension as a complex sesquilinear form $\widetilde{\mathcal{E}}$
with domain $\mathcal{D}\left[ \widetilde{\mathcal{E}}\right] =\mathcal{D}%
\left[ \mathcal{E}\right] +i\mathcal{D}\left[ \mathcal{E}\right] \subset
\left( L^{2}\left( X,\mu \right) ,\mathbb{C}\right) $ by setting 
\begin{eqnarray}
\widetilde{\mathcal{E}}\left( f_{1},f_{2}\right) &=&\widetilde{\mathcal{E}}%
\left( g_{1}+ih_{1},g_{2}+ih_{2}\right)  \label{eq-complex-sesq} \\
&=&\mathcal{E}\left( g_{1},g_{2}\right) +\mathcal{E}\left(
h_{1},h_{2}\right) +i\mathcal{E}\left( h_{1},g_{2}\right) -i\mathcal{E}%
\left( g_{1},h_{2}\right)  \notag
\end{eqnarray}%
where $g_{i}=\func{Re}f_{i}$ and $h_{i}=\func{Im}f_{i}$, $i=1,2$. Note that $%
\mathcal{E}$ is indeed the restriction of $\widetilde{\mathcal{E}}$ to $%
\mathcal{D}\left[ \mathcal{E}\right] \subset \left( L^{2}\left( X,\mu
\right) ,\mathbb{R}\right) $. Thus, if $\mathcal{E}$ is as in the previous
corollary, the sesquilinear form $\widetilde{\mathcal{E}}$ is \emph{accretive%
}, that is, $\func{Re}\widetilde{\mathcal{E}}\left( f,f\right) \geq 0$.
Moreover, if $K$ is the constant from condition (\ref{E2-sector}) in
Definition \ref{def-Dirichlet}, $\widetilde{\mathcal{E}}$ is sectorial with
the same constant:%
\begin{equation*}
\left\vert \func{Im}\widetilde{\mathcal{E}}\left( f,f\right) \right\vert
\leq K~\func{Re}\widetilde{\mathcal{E}}\left( f,f\right) .
\end{equation*}%
The operator $L_{\widetilde{\mathcal{E}}}$ associated to this sesquilinear
form (see 1.2.3 in \cite{ouhabaz05}), and its corresponding adjoint operator 
$\widehat{L}_{\widetilde{\mathcal{E}}}$ are the generators of a holomorphic
semigroup in the sector $\Sigma _{\arctan \left( 1/K\right) }$, see Theorem
1.53 in \cite{ouhabaz05} for a proof of the next result.

\begin{theorem}
\label{theorem-semigroup}Let $\mathcal{E}$ is a nonnegative bilineal form
with dense domain in $L^{2}\left( X,\mu \right) ,$ satisfying (\ref{E1-lb}),
(\ref{E2-sector}), and (\ref{E3-complete}), and let $\widetilde{\mathcal{E}}$
be the sesquilinear extension (\ref{eq-complex-sesq}). Then the associated
operators $-L_{\widetilde{\mathcal{E}}}$ and $-\widehat{L}_{\widetilde{%
\mathcal{E}}}$ generate strongly continuous semigroups $e^{-tL_{\widetilde{%
\mathcal{E}}}}$ and $e^{-t\widehat{L}_{\widetilde{\mathcal{E}}}}$, $t\geq 0$%
, on $L^{2}\left( \mathcal{X},\mu \right) $. These semigroups are
holomorphic on the sector $\Sigma _{\arctan \left( 1/K\right) }$ and the
operators $e^{-zL_{\widetilde{\mathcal{E}}}}$, $e^{-z\widehat{L}_{\widetilde{%
\mathcal{E}}}}$ are contraction operators, i.e. $\left\Vert e^{-zL_{%
\widetilde{\mathcal{E}}}}\right\Vert \leq 1$ and $\left\Vert e^{-z\widehat{L}%
_{\widetilde{\mathcal{E}}}}\right\Vert \leq 1$, for all $z\in \Sigma
_{\arctan \left( 1/K\right) }$.
\end{theorem}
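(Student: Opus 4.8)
The plan is to deduce the theorem from the classical theory of m-sectorial sesquilinear forms (\cite{kato60}; see also \cite{ouhabaz05}, \S 1.2 and Theorem 1.53), the only real work being to transfer conditions (\ref{E1-lb})--(\ref{E3-complete}) of Definition \ref{def-Dirichlet} across the complexification (\ref{eq-complex-sesq}). First I would record the identity obtained by setting $f_{1}=f_{2}=f=g+ih$ in (\ref{eq-complex-sesq}): writing $\mathcal{E}^{(s)}(u,v)=\frac{1}{2}(\mathcal{E}(u,v)+\mathcal{E}(v,u))$ and $\mathcal{E}^{(a)}(u,v)=\frac{1}{2}(\mathcal{E}(u,v)-\mathcal{E}(v,u))$ for the symmetric and antisymmetric parts of $\mathcal{E}$, and using $\mathcal{E}(u,u)=\mathcal{E}^{(s)}(u,u)$ together with the cancellation of the symmetric terms in the imaginary part, one obtains
\begin{equation*}
\func{Re}\widetilde{\mathcal{E}}(f,f)=\mathcal{E}^{(s)}(g,g)+\mathcal{E}^{(s)}(h,h),\qquad \func{Im}\widetilde{\mathcal{E}}(f,f)=2\,\mathcal{E}^{(a)}(h,g).
\end{equation*}
Everything needed about $\widetilde{\mathcal{E}}$ will be read off from this.

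Next I would verify that $\widetilde{\mathcal{E}}$, on the domain $\mathcal{D}[\widetilde{\mathcal{E}}]=\mathcal{D}[\mathcal{E}]+i\mathcal{D}[\mathcal{E}]$, is a densely defined, closed, accretive, sectorial sesquilinear form on $L^{2}(X,\mu;\mathbb{C})$. Density is inherited from density of $\mathcal{F}=\mathcal{D}[\mathcal{E}]$, and accretivity is immediate from $\alpha_{0}=0$ and (\ref{E1-lb}), since $\func{Re}\widetilde{\mathcal{E}}(f,f)=\mathcal{E}(g,g)+\mathcal{E}(h,h)\geq 0$. For the sector condition, (\ref{E2-sector}) with $\alpha_{0}=0$ gives $|\mathcal{E}^{(a)}(u,v)|\leq \frac{1}{2}(|\mathcal{E}(u,v)|+|\mathcal{E}(v,u)|)\leq K\,\mathcal{E}^{(s)}(u,u)^{1/2}\mathcal{E}^{(s)}(v,v)^{1/2}$, so by the arithmetic--geometric mean inequality
\begin{equation*}
|\func{Im}\widetilde{\mathcal{E}}(f,f)|=2|\mathcal{E}^{(a)}(h,g)|\leq 2K\,\mathcal{E}^{(s)}(h,h)^{1/2}\mathcal{E}^{(s)}(g,g)^{1/2}\leq K\,\func{Re}\widetilde{\mathcal{E}}(f,f),
\end{equation*}
which says the numerical range of $\widetilde{\mathcal{E}}$ lies in $\overline{\Sigma_{\arctan K}}$. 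Closedness follows from (\ref{E3-complete}): the real part of $\widetilde{\mathcal{E}}(\cdot,\cdot)+\langle\cdot,\cdot\rangle$ at $f=g+ih$ equals $\mathcal{E}_{1}^{(s)}(g,g)+\mathcal{E}_{1}^{(s)}(h,h)$, so $\mathcal{D}[\widetilde{\mathcal{E}}]$ with its form norm is isometric to the product of two copies of $\mathcal{F}$ carrying the (complete) $\mathcal{E}_{1}^{(s)}$-norm.

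With these four properties in hand, Kato's first representation theorem produces the m-sectorial operator $L_{\widetilde{\mathcal{E}}}$ associated with $\widetilde{\mathcal{E}}$, with numerical range in $\overline{\Sigma_{\arctan K}}$; on the real space it restricts to the operator $L_{\mathcal{E}}$ of Theorem \ref{theorem-Dirichlet}, and the analogous operator $\widehat{L}_{\widetilde{\mathcal{E}}}$ is obtained from the adjoint form $\widetilde{\mathcal{E}}^{\ast}(f,g)=\overline{\widetilde{\mathcal{E}}(g,f)}$, which obeys the same estimates with the same constant. I would then invoke the standard fact (Theorem 1.53 in \cite{ouhabaz05}) that an m-sectorial operator with numerical range in $\overline{\Sigma_{\arctan K}}$ generates a bounded holomorphic semigroup on the sector $\Sigma_{\pi/2-\arctan K}$, which is in particular strongly continuous on $[0,\infty)$ and there agrees with the semigroup $e^{-tL_{\mathcal{E}}}$ of Theorem \ref{theorem-Dirichlet}; moreover, for $|\arg z|<\pi/2-\arctan K$ the rotated operator $zL_{\widetilde{\mathcal{E}}}$ is again m-accretive, since its numerical range lies in $z\,\overline{\Sigma_{\arctan K}}\subset\{\func{Re}\geq 0\}$, so $\Vert e^{-zL_{\widetilde{\mathcal{E}}}}\Vert\leq 1$ there by the Lumer--Phillips theorem. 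Since $\arctan K+\arctan(1/K)=\pi/2$, this is precisely the claimed holomorphy and contractivity of $e^{-zL_{\widetilde{\mathcal{E}}}}$ on $\Sigma_{\arctan(1/K)}$, and the same reasoning applied to $\widehat{L}_{\widetilde{\mathcal{E}}}$ handles $e^{-z\widehat{L}_{\widetilde{\mathcal{E}}}}$.

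The one delicate point is the bookkeeping of the sector angle: one must make sure the half-angle of $\widetilde{\mathcal{E}}$ is no larger than $\arctan K$ (rather than $\arctan 2K$), and this is exactly what the cancellation of the symmetric part in $\func{Im}\widetilde{\mathcal{E}}(f,f)$ together with the arithmetic--geometric mean step secures; the remaining steps are routine transfers of the real-form hypotheses across (\ref{eq-complex-sesq}) and an appeal to the abstract form theory.
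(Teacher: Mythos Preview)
Your proposal is correct and follows essentially the same route as the paper. In fact the paper does not give its own proof of this theorem: it states (without details) that $\widetilde{\mathcal{E}}$ is accretive and sectorial with the same constant $K$, and then refers the reader to Theorem~1.53 in \cite{ouhabaz05}; your sketch simply supplies the verification of the form hypotheses (density, accretivity, sectoriality with half-angle $\arctan K$, closedness) and then invokes exactly that reference, so the two are aligned.
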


\begin{proposition}
\label{prop-restriction}Let $\mathcal{E}$ is a nonnegative bilineal form
with dense domain $\mathcal{F}$ in $L^{2}\left( \mathcal{X},\mu \right) ,$
satisfying (\ref{E1-lb}), (\ref{E2-sector}), and (\ref{E3-complete}), and
let $\widetilde{\mathcal{E}}$ be the sesquilinear extension (\ref%
{eq-complex-sesq}). Then the associated operator $L_{\mathcal{E}}$ ($%
\widehat{L}_{\mathcal{E}}$) is the restriction of the operator $L_{%
\widetilde{\mathcal{E}}}$ ($\widehat{L}_{\widetilde{\mathcal{E}}}$) in the
sense that $\mathcal{D}\left( L_{\mathcal{E}}\right) =\func{Re}$ $\mathcal{D}%
\left( L_{\widetilde{\mathcal{E}}}\right) $ ($\mathcal{D}\left( \widehat{L}_{%
\mathcal{E}}\right) =\func{Re}$ $\mathcal{D}\left( \widehat{L}_{\widetilde{%
\mathcal{E}}}\right) $), and $L_{\mathcal{E}}\left( \func{Re}u\right) =\func{%
Re}\left( L_{\widetilde{\mathcal{E}}}u\right) $ ($\widehat{L}_{\mathcal{E}%
}\left( \func{Re}u\right) =\func{Re}\left( \widehat{L}_{\widetilde{\mathcal{E%
}}}u\right) $) for all $u\in \mathcal{D}\left( L_{\mathcal{E}}\right) $ ($%
\mathcal{D}\left( \widehat{L}_{\mathcal{E}}\right) $).
\end{proposition}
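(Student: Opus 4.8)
The plan is to reduce the statement to the two characterizations of the associated operators. Since $\mathcal{E}$ is nonnegative we may take $\alpha_0 = 0$, so $\mathcal{E}_{\alpha_0} = \mathcal{E}$. By Theorem~\ref{theorem-Dirichlet} and the discussion following it, the real operator $L_{\mathcal{E}}$ is characterized by: $u \in \mathcal{D}(L_{\mathcal{E}})$ with $L_{\mathcal{E}} u = h$ if and only if $u \in \mathcal{F}$, $h$ is a (real-valued) element of $L^2(X,\mu)$, and $\langle h, f\rangle = \mathcal{E}(u, f)$ for all $f \in \mathcal{F}$. Likewise, by the construction of the operator associated to a closed sesquilinear form (see 1.2.3 in~\cite{ouhabaz05}) together with the completeness assumption~(\ref{E3-complete}) (which makes the form domain $\mathcal{F} + i\mathcal{F}$ of $\widetilde{\mathcal{E}}$ complete), the operator $L_{\widetilde{\mathcal{E}}}$ is characterized by: $w \in \mathcal{D}(L_{\widetilde{\mathcal{E}}})$ with $L_{\widetilde{\mathcal{E}}} w = g$ if and only if $w \in \mathcal{F} + i\mathcal{F}$, $g \in L^2(X,\mu)$ (complex-valued), and $\langle g, \varphi\rangle = \widetilde{\mathcal{E}}(w, \varphi)$ for all $\varphi \in \mathcal{F} + i\mathcal{F}$, where $\langle\cdot,\cdot\rangle$ now denotes the sesquilinear $L^2$-inner product. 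In particular $\mathcal{D}(L_{\widetilde{\mathcal{E}}}) \subset \mathcal{F} + i\mathcal{F}$, so $\func{Re}\,w,\ \func{Im}\,w \in \mathcal{F}$ for every $w \in \mathcal{D}(L_{\widetilde{\mathcal{E}}})$.

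For the inclusion $\func{Re}\,\mathcal{D}(L_{\widetilde{\mathcal{E}}}) \subseteq \mathcal{D}(L_{\mathcal{E}})$ and the identity $L_{\mathcal{E}}(\func{Re}\,w) = \func{Re}(L_{\widetilde{\mathcal{E}}} w)$, I would fix $w = u + iv \in \mathcal{D}(L_{\widetilde{\mathcal{E}}})$ with $u = \func{Re}\,w$, $v = \func{Im}\,w \in \mathcal{F}$, and write $g := L_{\widetilde{\mathcal{E}}} w = g_1 + i g_2$. Testing the characterizing identity for $L_{\widetilde{\mathcal{E}}}$ against an arbitrary \emph{real} $f \in \mathcal{F}$ and using the definition~(\ref{eq-complex-sesq}) with vanishing imaginary part in the second argument, one gets $\widetilde{\mathcal{E}}(w, f) = \mathcal{E}(u, f) + i\,\mathcal{E}(v, f)$, whereas $\langle g, f\rangle = \langle g_1, f\rangle + i\langle g_2, f\rangle$. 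Separating real and imaginary parts gives $\langle g_1, f\rangle = \mathcal{E}(u, f)$ and $\langle g_2, f\rangle = \mathcal{E}(v, f)$ for every $f \in \mathcal{F}$, so by the characterization of $L_{\mathcal{E}}$ we get $u, v \in \mathcal{D}(L_{\mathcal{E}})$ with $L_{\mathcal{E}} u = g_1 = \func{Re}(L_{\widetilde{\mathcal{E}}} w)$, which is the claimed formula.

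For the reverse inclusion I would start from $u \in \mathcal{D}(L_{\mathcal{E}})$, set $h := L_{\mathcal{E}} u$, and regard $u$ as a complex-valued element of $\mathcal{F} + i\mathcal{F}$ with zero imaginary part. For an arbitrary $\varphi = f_1 + if_2 \in \mathcal{F} + i\mathcal{F}$, formula~(\ref{eq-complex-sesq}) gives $\widetilde{\mathcal{E}}(u, \varphi) = \mathcal{E}(u, f_1) - i\,\mathcal{E}(u, f_2) = \langle h, f_1\rangle - i\langle h, f_2\rangle = \langle h, \varphi\rangle$, so by the characterization of $L_{\widetilde{\mathcal{E}}}$ we obtain $u \in \mathcal{D}(L_{\widetilde{\mathcal{E}}})$ with $L_{\widetilde{\mathcal{E}}} u = h$; since $\func{Re}\,u = u$ this gives $\mathcal{D}(L_{\mathcal{E}}) \subseteq \func{Re}\,\mathcal{D}(L_{\widetilde{\mathcal{E}}})$, completing the proof for $L_{\mathcal{E}}$. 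The assertions for the adjoint operators follow by running the same argument with the adjoint form $\widehat{\mathcal{E}}(u,v) := \mathcal{E}(v,u)$ in place of $\mathcal{E}$, once one checks the elementary identity that the sesquilinear extension of $\widehat{\mathcal{E}}$ equals the adjoint of $\widetilde{\mathcal{E}}$ (so that $\widehat{L}_{\widetilde{\mathcal{E}}}$ is the operator associated to $\widetilde{\widehat{\mathcal{E}}}$). No genuine analytic obstacle arises here; the only points requiring a little care are the bookkeeping between the real and complex inner products, the conjugate-linearity built into~(\ref{eq-complex-sesq}), and this last identity for the adjoint form.
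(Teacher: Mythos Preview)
Your proof is correct and follows essentially the same approach as the paper's: both arguments rely on the characterizations of $L_{\mathcal{E}}$ and $L_{\widetilde{\mathcal{E}}}$ via the identities $\langle L_{\mathcal{E}}u,f\rangle=\mathcal{E}(u,f)$ and $\langle L_{\widetilde{\mathcal{E}}}w,\varphi\rangle=\widetilde{\mathcal{E}}(w,\varphi)$, test the complex identity against real $f\in\mathcal{F}$ and separate real and imaginary parts for one inclusion, and expand $\widetilde{\mathcal{E}}(u,f_1+if_2)$ via~(\ref{eq-complex-sesq}) for the other. Your version is slightly more explicit about the characterizations and about reducing the adjoint case to the form $\widehat{\mathcal{E}}(u,v)=\mathcal{E}(v,u)$, but the substance is the same.
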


\begin{proof}
Suppose $u\in \mathcal{D}\left( L_{\widetilde{\mathcal{E}}}\right) $, then
for all $v\in \mathcal{D}\left[ \widetilde{\mathcal{E}}\right] =\mathcal{D}%
\left[ \mathcal{E}\right] +i\mathcal{D}\left[ \mathcal{E}\right] $ 
\begin{equation*}
\left\langle L_{\widetilde{\mathcal{E}}}u,v\right\rangle =\int_{\mathcal{X}%
}\left( L_{\widetilde{\mathcal{E}}}u\right) \overline{v}~d\mu =\widetilde{%
\mathcal{E}}\left( u,v\right) .
\end{equation*}%
Since $\mathcal{D}\left[ \widetilde{\mathcal{E}}\right] \supset \mathcal{D}%
\left[ \mathcal{E}\right] =\func{Re}\mathcal{D}\left[ \widetilde{\mathcal{E}}%
\right] $, we have that for $v\in \mathcal{D}\left[ \mathcal{E}\right] $%
\begin{equation*}
\func{Re}\left\langle L_{\widetilde{\mathcal{E}}}u,v\right\rangle =\int_{%
\mathcal{X}}\func{Re}\left( L_{\widetilde{\mathcal{E}}}u\right) v~d\mu =%
\func{Re}\widetilde{\mathcal{E}}\left( u,v\right) =\mathcal{E}\left( \func{Re%
}u,v\right) .
\end{equation*}
Thus $\func{Re}u\in \mathcal{D}\left( L_{\mathcal{E}}\right) $ and $L_{%
\mathcal{E}}\left( \func{Re}u\right) =\func{Re}\left( L_{\widetilde{\mathcal{%
E}}}u\right) $. Similarly, if $u\in \mathcal{D}\left( L_{\mathcal{E}}\right) 
$, and $v\in \mathcal{D}\left[ \widetilde{\mathcal{E}}\right] ,$ write $%
v=f+ig$ where $f=\func{Re}v$ and $g=\func{Im}v$, then%
\begin{eqnarray*}
\left\langle L_{\mathcal{E}}u,v\right\rangle &=&\left\langle L_{\mathcal{E}%
}u,f\right\rangle -i\left\langle L_{\mathcal{E}}u,g\right\rangle \\
&=&\int_{\mathcal{X}}\left( L_{\mathcal{E}}u\right) f~d\mu -i\int_{\mathcal{X%
}}\left( L_{\mathcal{E}}u\right) g~d\mu \\
&=&\mathcal{E}\left( u,f\right) -i\mathcal{E}\left( u,g\right) =\widetilde{%
\mathcal{E}}\left( u,v\right) .
\end{eqnarray*}%
So $u\in \mathcal{D}\left( L_{\widetilde{\mathcal{E}}}\right) $ and $L_{%
\mathcal{E}}u=L_{\widetilde{\mathcal{E}}}u$. The proof for the operators $%
\widehat{L}_{\mathcal{E}}$ and $\widehat{L}_{\widetilde{\mathcal{E}}}$ is
similar.
\end{proof}

As a consequence of the previous proposition and Theorem \ref%
{theorem-semigroup} we have that if $\mathcal{E}$ is a nonnegative bilineal
form as in the proposition then the operators $L_{\mathcal{E}}$ and $L_{%
\widetilde{\mathcal{E}}}$ generate strongly continuous semigroups in $\left(
-\infty ,0\right] $ and holomorphic contractive semigroups $e^{-zL_{\mathcal{%
E}}}$, $e^{-z\widehat{L}_{\mathcal{E}}}$ on the sector $\Sigma _{\arctan
\left( 1/K\right) }$. In turn, standard results imply that the operators $L_{%
\mathcal{E}}$ and $\widehat{L}_{\mathcal{E}}$ are sectorial of angle $\frac{%
\pi }{2}-\arctan \left( 1/K\right) $, see for example Theorem II.4.6 in \cite%
{engel-nagel99} for a proof. We collect these facts in the following
corollary.

\begin{corollary}
\label{coro-sectorial}Let $\mathcal{E}$ is a nonnegative bilineal form with
dense domain in $L^{2}\left( X,\mu \right) ,$ satisfying (\ref{E1-lb}), (\ref%
{E2-sector}), and (\ref{E3-complete}) from Definition \ref{def-Dirichlet}.
Then the associated operators $L_{\widetilde{\mathcal{E}}}$ and $\widehat{L}%
_{\widetilde{\mathcal{E}}}$ are sectorial of angle $\frac{\pi }{2}-\arctan
\left( 1/K\right) $, where $K$ is then constant in (\ref{E2-sector}).
Moreover, $-L_{\widetilde{\mathcal{E}}}$ and $-\widehat{L}_{\widetilde{%
\mathcal{E}}}$ generate strongly continuous semigroups $e^{-tL_{\mathcal{E}%
}} $ and $e^{-t\widehat{L}_{\mathcal{E}}}$, $t\geq 0$, on $L^{2}\left( 
\mathcal{X},\mu \right) $. These semigroups are holomorphic on the sector $%
\Sigma _{\arctan \left( 1/K\right) }$ and the operators $e^{-zL_{\widetilde{%
\mathcal{E}}}}$, $e^{-z\widehat{L}_{\widetilde{\mathcal{E}}}}$ are
contraction operators.
\end{corollary}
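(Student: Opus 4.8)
The plan is to obtain the statement by assembling the two preceding results — Proposition \ref{prop-restriction} and Theorem \ref{theorem-semigroup} — together with a standard generation criterion for sectorial operators. First I would pass from the real bilinear form $\mathcal{E}$ to its complex sesquilinear extension $\widetilde{\mathcal{E}}$ given by (\ref{eq-complex-sesq}). As already observed in the paragraph preceding Theorem \ref{theorem-semigroup}, the hypotheses (\ref{E1-lb}), (\ref{E2-sector}), (\ref{E3-complete}) on $\mathcal{E}$ make $\widetilde{\mathcal{E}}$ accretive, $\func{Re}\,\widetilde{\mathcal{E}}(f,f)\ge 0$, and sectorial with the very same constant $K$, $|\func{Im}\,\widetilde{\mathcal{E}}(f,f)|\le K\,\func{Re}\,\widetilde{\mathcal{E}}(f,f)$. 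This is precisely the input required by Theorem \ref{theorem-semigroup}, whose conclusion is that $-L_{\widetilde{\mathcal{E}}}$ and $-\widehat{L}_{\widetilde{\mathcal{E}}}$ generate strongly continuous semigroups that extend holomorphically to $\Sigma_{\arctan(1/K)}$ with $\|e^{-zL_{\widetilde{\mathcal{E}}}}\|\le 1$ and $\|e^{-z\widehat{L}_{\widetilde{\mathcal{E}}}}\|\le 1$ there. Invoking Proposition \ref{prop-restriction}, $L_{\mathcal{E}}$ is the real restriction of $L_{\widetilde{\mathcal{E}}}$ (and $\widehat{L}_{\mathcal{E}}$ of $\widehat{L}_{\widetilde{\mathcal{E}}}$), so the semigroups $e^{-tL_{\mathcal{E}}}$, $e^{-t\widehat{L}_{\mathcal{E}}}$ of Theorem \ref{theorem-Dirichlet} are exactly the restrictions of $e^{-tL_{\widetilde{\mathcal{E}}}}$, $e^{-t\widehat{L}_{\widetilde{\mathcal{E}}}}$ to real-valued functions; in particular they are strongly continuous on $[0,\infty)$ and contractive, consistent with the bound $\|T_t\|\le e^{\alpha_0 t}$ of Theorem \ref{theorem-Dirichlet} with $\alpha_0=0$.

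It then remains to promote ``generator of a bounded holomorphic semigroup on $\Sigma_{\arctan(1/K)}$'' to ``sectorial of angle $\pi/2-\arctan(1/K)$'' in the sense of Definition \ref{def-type-w}. Since $-L_{\widetilde{\mathcal{E}}}$ generates a $C_0$-semigroup, $L_{\widetilde{\mathcal{E}}}$ is closed and densely defined, and $\sigma(L_{\widetilde{\mathcal{E}}})\subset\overline{\Sigma_{\pi/2-\arctan(1/K)}}\cup\{\infty\}$. For the resolvent estimate I would use the Laplace-transform representation $\mathsf{R}_{L_{\widetilde{\mathcal{E}}}}(z)=\int_0^{\infty}e^{zs}e^{-sL_{\widetilde{\mathcal{E}}}}\,ds$, valid initially for $\func{Re}\,z<0$, deforming the ray of integration to $\arg s=\mp\theta'$ with $|\theta'|<\arctan(1/K)$ by holomorphy and Cauchy's theorem; combined with the contraction bound $\|e^{-sL_{\widetilde{\mathcal{E}}}}\|\le 1$ on the sector, this yields $\|\mathsf{R}_{L_{\widetilde{\mathcal{E}}}}(z)\|\le c_\theta/|z|$ for all non-zero $z\notin\Sigma_\theta$ and every $\theta\in(\pi/2-\arctan(1/K),\pi]$. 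This is exactly Definition \ref{def-type-w}, so $L_{\widetilde{\mathcal{E}}}$ is of type $\pi/2-\arctan(1/K)$; the identical argument gives the same for $\widehat{L}_{\widetilde{\mathcal{E}}}$. Alternatively, one may quote this equivalence directly from Theorem II.4.6 of \cite{engel-nagel99}.

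I do not expect any genuine obstacle here — the corollary is essentially a repackaging of Theorem \ref{theorem-semigroup} and Proposition \ref{prop-restriction}. The one point deserving a little care is the bookkeeping of sector openings: that holomorphy and contractivity on the opening $\arctan(1/K)$ translate into the resolvent bound on the complement of every sector of half-angle exceeding $\pi/2-\arctan(1/K)$, and that the constant $c_\theta$ can be taken uniform on each such complementary sector. Once these angle relations are pinned down, the rest is a direct verification of Definition \ref{def-type-w}.
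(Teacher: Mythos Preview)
Your proposal is correct and matches the paper's own argument almost exactly: the paper states the corollary as a direct consequence of Proposition \ref{prop-restriction} and Theorem \ref{theorem-semigroup}, and for the passage from ``bounded holomorphic semigroup on $\Sigma_{\arctan(1/K)}$'' to ``sectorial of angle $\pi/2-\arctan(1/K)$'' it simply cites Theorem II.4.6 of \cite{engel-nagel99}, which is precisely the alternative you mention. Your Laplace-transform/contour-rotation sketch is a correct unpacking of that cited result, so you have in fact given slightly more detail than the paper does.
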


\bibliographystyle{amsplain}
\providecommand{\bysame}{\leavevmode\hbox to3em{\hrulefill}\thinspace}
\providecommand{\MR}{\relax\ifhmode\unskip\space\fi MR }
\providecommand{\MRhref}[2]{%
  \href{http://www.ams.org/mathscinet-getitem?mr=#1}{#2}
}
\providecommand{\href}[2]{#2}

\end{document}